\newcommand\R{\mathbb{R}}
\renewcommand{\Im}{\operatorname{Im}}
\renewcommand{\Re}{\operatorname{Re}}
\newcommand\Z{\mathbb{Z}}
\newcommand\N{\mathbb{N}}
\newcommand\C{\mathbb{C}}
\newcommand\cK{\mathcal{K}}
\newcommand\E{\mathbb{E}}
\newcommand\Q{\mathbb{Q}}
\newcommand{\cQ}{\mathcal{Q}}
\newcommand{\cP}{\mathcal{P}}
\begin{document}

\newcommand{\dd}{\mathrm{d}}
\newcommand{\ee}{\mathrm{e}}
\newcommand{\ii}{\mathrm{i}}
\newtheorem{theoreme}{Theorem}
\newtheorem{definition}{Definition}
\newtheorem{lemme}{Lemma}
\newtheorem{rem}{Remark}
\newtheorem{exemple}{Example}
\newtheorem{proposition}{Proposition}
\newtheorem{corollaire}{Corollary}
\newtheorem{hyp}{Hypothesis}
\newtheorem*{theo}{Theorem}
\newtheorem*{prop}{Proposition}
\newtheorem*{main*}{Main results}
\newtheorem*{LemStar}{Lemma}

%Margin comments
\newcommand{\mar}[1]{{\marginpar{\sffamily{\scriptsize
        #1}}}}
\newcommand{\mi}[1]{{\mar{MI:#1}}}

\title{Scattering resonances of large weakly open quantum graphs}
\author{Maxime Ingremeau\footnote{Université Côte d'Azur, Laboratoire J.A. Dieudonné}}
\date{}

\maketitle

\abstract{In this paper, we consider a sequence of open quantum graphs, with uniformly bounded data, and we are interested in the asymptotic distribution of their scattering resonances. Supposing that the number of leads in our quantum graphs is small compared to the total number of edges, we show that most resonances are close to the real axis. More precisely, the asymptotic distribution of resonances of our open quantum graphs is the same as the asymptotic distribution of the square-root of the eigenvalues of the closed quantum graphs obtained by removing all the leads.}

\section{Introduction}

\subsection{Weyl's laws for scattering resonances}
When dealing with Schrödinger operators in $\R^d$, with a compactly supported potential, the spectrum of the operator consists of $[0, +\infty)$, along with finitely many negative eigenvalues.
Thus, a lot of relevant spectral information about such operators consist, not in the spectrum and the eigenfunctions themselves, but rather in the \emph{scattering resonances} and \emph{scattering states}. These states are eigenfunctions which are not square-integrable, but instead are \emph{outgoing} at infinity. The associated eigenvalues are the scattering resonances, which are complex numbers having the following interpretation: the real part is associated to the speed of oscillations, while the imaginary part corresponds to the escape rate of the scattering state towards infinity, when propagated by the wave equation.

For instance, in dimension 1, a number $z\in \C$ is a resonance of an operator $-\Delta +V$ if there exists a function $f\in C^\infty(\R)$ such that
$$\begin{cases}
-\Delta f + Vf = z^2 f\\
f(x) = a_\pm e^{\pm izx} ~~\text{ when $\pm x>>1$}. 
\end{cases}$$

We refer the reader to the recent monograph \cite{DZ} both for a friendly introduction to scattering resonances, and for a comprehensive presentation of this field of research.

 Scattering resonances can be seen as (square roots of) eigenvalues of a \emph{non-selfadjoint} operator. It is notorious that the spectrum of non-selfadjoint operators is much more difficult to understand than that of self-adjoint ones; this is mainly due to pseudospectral effects, where a small perturbation in the operator can produce large perturbations of the spectrum.
 
 Thus, it is complicated to determine the localization of the resonances. In dimension $d=1$, it is possible to count the number of resonances in large disks. Namely, if we denote by $N(R)$ the number of resonances in $D(0,R)$ counted with multiplicity, we have
 \begin{equation}\label{eq:WeylPot}
 \frac{N(R)}{R^d} \underset{R\to +\infty}{\longrightarrow} \frac{2 \mathrm{Vol}(\mathrm{chsupp} (V))}{\pi},
 \end{equation}
where $\mathrm{Vol}(\mathrm{chsupp} (V))$ denotes the length of the convex hull of the support of $V$.

Equation (\ref{eq:WeylPot}) was first proved for some special case of $V$ in \cite{Reg}, and then generalized in \cite{ZwoBound} (see also \cite{Froese} and \cite{SimRes}). It can be seen as an analogue of the classical \emph{Weyl's law} for the eigenvalues of the Laplacian in a bounded domain.

In higher dimension, it is not known if the analogue of (\ref{eq:WeylPot}) holds. All that is known in general is the upper bound proven in \cite{ZwoMultiD}, saying that $N(R)\leq C_V R^d$. 

Another interesting problem is to count the number of resonances not in disks, but in strips below the real axis. From a physical point of view, this is more relevant, as the resonances close to the real axis are related to resonant states which spend a long time in the system, before they escape to infinity. This question is often studied in the context of geometric scattering, or in the semiclassical framework. We refer to \cite{ZwoRes} for an account of the recent developments in this direction.

\subsection{Scattering resonances on quantum graphs}

As explained in the previous section, it is easier to count resonances of Schrödinger operators on $\R$ than on $\R^d$. A popular family of one-dimensional systems is that of \emph{quantum graphs}. A quantum graph is a graph where each edge is endowed with a length, and each vertex, with a boundary condition. Eigenfunctions on the quantum graph are eigenfunctions of the Laplacian on each edge (and hence, are complex exponentials), satisfying the required boundary conditions at each vertex. The most natural boundary conditions are the \emph{Kirchhoff} (or \emph{Neumann-Kirchhoff}) \emph{boundary conditions} (see Definition \ref{def:Kirch}); in all this paper, we will be working with these conditions.

Though very simple to describe, quantum graphs and their spectrum have been successfully used as a model of quantum chaos since the pioneering work of Kottos and Smilansky \cite{KS97}. Here, we are interested in quantum graphs where, to some of the vertices, we attach semi-infinite edges (or \emph{leads}). Scattering resonances on such open quantum graphs are complex numbers $z$ such that there exists an eigenfunction satisfying the boundary conditions at each vertex, and equal to a constant times $e^{i z x}$ on every lead. Resonances can also be seen as poles of the scattering matrix or as poles of the scattering determinant (see \cite{Equivalence}).

\paragraph{Counting resonances in disks}

It is natural to wonder if (\ref{eq:WeylPot}) holds on open quantum graphs. A natural candidate to replace the constant $\mathrm{Vol}(\mathrm{chsupp} (V))$ would be the length of the (finite part of the) quantum graph, i.e., the sum of the lengths of all the edges that are not leads.

Actually, there are examples where $N(R)$ behaves asymptotically like a constant times $R$, where the constant is not $\frac{2}{\pi}$ times the length of the graph. Such an open quantum graph is sometimes called a \emph{non-Weyl graph}. In  \cite{NonWeyl}, it was shown that, on an open quantum graph with Kirchhoff boundary conditions at every vertex, we have $\frac{N(R)}{R}\longrightarrow c$, where $c$ is $\frac{2}{\pi}$ times the length of the graph if and only if the graph is \emph{unbalanced}, i.e., at every vertex, the number of internal edges is different from the number of leads.

The simplest example of a non-Weyl (thus, not unbalanced) quantum graph is the following: if we have a vertex to which one internal edge and one lead are attached, the situation is equivalent to the one we obtain by removing the vertex and the lead, and replacing the edge by a lead; this would thus reduce the length of the quantum graph. Other examples of non-Weyl graphs were constructed in \cite{NonWeyl}, \cite{NonWeylEffective}, \cite{NonWeylMag}, \cite{NonWeyGen}. The article \cite{Lip} provides a nice presentation of all these results.

\paragraph{Counting resonances in strips}

Just as for Schrödinger operators on $\R^d$, it is interesting to count the number of resonances in strips below the real axis, instead of disks. In \cite{NonWeyl}, it is shown that, for quantum graphs satisfying Kirchhoff boundary conditions, the resonances all lie in a strip of the form $\Im z\in [-K, 0]$. To our knowledge, no explicit expression has ever been given for $K$; we will do so in Lemma \ref{lem:InABand} below in the case of unbalanced quantum graphs.

It is natural to wonder if we can also have a non-trivial \emph{upper bound} on the imaginary part of resonances, that is to say, if they do all lie in a strip of the form $\Im z \in [-K, - K']$ for some $K'>0$.

This cannot hold in general, as there can be resonances on the real axis. This is something typical of quantum graphs, related to the fact that, on a quantum graph, there can exist eigenfunctions which vanish on a large part of the graph. For instance, if a quantum graph contains a cycle whose edges have lengths that are all commensurate, it is possible to build eigenfunctions which will vanish on every vertex of the cycle, and on all the edges that are not part of the cycle (see Figure \ref{exampleTrivial} in the appendix). This is in strict opposition to what happens for Schrödinger operators in bounded domains or on manifolds, where the unique continuity principle forbids an eigenfunction to vanish on an open set.

Actually, this situation is highly non-typical: it is shown in \cite{Rational} and \cite{FermiGoldenRule} that, if a quantum graph has a resonance on the real axis, then a small perturbation of the lengths will turn it into a resonance with negative imaginary part. Also, it is shown in  \cite[Theorem 4.2]{CdvT} that, for almost all choices of edge lengths, there are no resonances on the real axis.
So far, the only known examples of quantum graphs having resonances on the real axis either have some loop, some vertices of degree 2, or have the lengths of the edges on a cycle all commensurate (see Figure \ref{exampleTrivial} in Appendix \ref{app:Irr}). It is thus natural to think that, when all edge lengths are rationally independent, all degrees are at least 3, and the graph contains no loops, there cannot be any resonance on the real axis. We will show in Appendix \ref{app:Irr} that this is not the case: we show (in a non-constructive way) the existence of quantum graphs containing no loops, having minimal degree larger than 3, and with rationally independent edge lengths, but having some resonances on the real axis.

Coming back to the existence of a resonance gap, i.e., to the question of whether resonances lie in a strip of the form $\Im z \in [-K, - K']$ for some $K'>0$, the analogy with semiclassical Schrödinger operators may lead us to look for criteria that are less related to arithmetic of the edge lengths, and more to some classical dynamics. In the physics literature, this was proposed in \cite{BarGas}, where the authors introduce some classical dynamics on a quantum graph, introduce a topological pressure\footnote{analogue to the topological pressure of half the unstable Jacobian for semiclassical Schrödinger operators whose classical dynamics have a hyperbolic trapped set. See \cite{NonnenReview} for a definition of this quantity in the case of semiclassical Schrödinger operators.}, and claim that, if this pressure is negative, there should be a resonance gap.

This statement was disproved in  \cite{GSS} thanks to physical and numerical arguments, and in \cite{CdvT} by mathematical proofs. Indeed, in \cite{CdvT}, the authors show that, for quantum graphs that are not trees, and have rationally independent edge lengths, there exist resonances arbitrarily close to the real axis (and with arbitrarily large real parts). The authors are even able to give an interesting lower bound on the number of resonances close to the real axis.

Finally, let us mention that, along with scattering resonances on an open quantum graph, many other non-selfadjoint problems have been considered on quantum graphs, like the damped wave equation, Schrödinger operators with complex potentials, or non-selfadjoint conditions at the vertices. We refer the reader to the introduction of \cite{RivRoy} for a presentation of the advances on such topics.

\subsection{Resonances of large quantum graphs}\label{sec1.3}

In this paper, we will not be interested in the resonances of a single quantum graph. We will rather consider a sequence of open quantum graphs $\mathcal{Q}_N$, having a number of vertices and edges of the order of $N$, and will consider the asymptotic distribution of their resonances. Namely the kind of question we will be interested in is the following:

\begin{equation}\label{Quest}
\begin{aligned}
\text{Given a region $\Omega\subset \C$, what is the asymptotic}
\\ \text{number of resonances of $\mathcal{Q}_N$ in $\Omega$ as $N\longrightarrow +\infty$ ?}
\end{aligned}
\end{equation}
Hence, we consider here the \emph{large graph limits} instead of the \emph{large frequency limits}.

To our knowledge, this is the first time in the mathematical literature that resonances of large open quantum graphs are considered. However, for closed quantum graphs (i.e., quantum graphs without leads, which is the situation the most frequently considered), the large graph limits has been a subject of growing interest in the past few years. An important motivation for that was the question of Quantum Ergodicity, which is \textcolor{black}{a} property of equidistribution of the eigenfunctions in systems enjoying a chaotic (or just ergodic) classical dynamics. Although quantum graphs are a popular model of quantum chaos, it was shown in \cite{CdV15} that, in general, Quantum Ergodicity does not hold on a given quantum graph. It thus became natural to consider eigenfunctions on a \emph{sequence} of quantum graphs of increasing size. See for instance \cite{BKS07}, \cite{GKP10}, \cite{KaSc14} \cite{BrWi16}, \cite{QEQGEQ} and \cite{QEQG} for examples of families of quantum graphs for which quantum ergodicity holds, and \cite{BKW04} for an example where it doesn't.

 Let us also mention \cite{BSQG}, where the question (\ref{Quest}) is addressed for closed quantum graphs (hence, the resonances are just  the square roots of the usual eigenvalues of selfadjoint operators, which are thus on the real axis). 
Instead of counting exactly the number of resonances in a set $\Omega$, it is more natural to consider the \emph{empirical spectral measures}
$$\mu_{\mathcal{Q_N}}:= \frac{1}{\mathcal{L}(\cQ_N)} \sum_{z\in \mathrm{Res}(\mathcal{Q}_N)} \delta_{z},$$
where $\mathcal{L}(\cQ_N)$ is the sum of the lengths of the edges of $\mathcal{Q}_N$, $\mathrm{Res}(\mathcal{Q}_N)$ is the set of resonances of $\mathcal{Q}_N$. It is thus a locally finite measure, which is supported on $\R$ in the case of closed graphs.

\paragraph{Benjamini-Schramm convergence of quantum graphs}

In \cite{BSQG}, the authors show that a natural criterion to obtain convergence of the empirical spectral measures is to suppose \emph{Benjamini-Schramm convergence} (also dubbed \emph{local weak convergence}) of the quantum graphs. 
For a sequence of discrete graphs $(G_N)$, we say that it converges in the sense of Benjamini-Schramm (\cite{BS,AL}) if for any finite rooted graph $(F,o)$ and $k\in\N$, the fraction of points $x\in G_N$ such that $B_{G_N}(x,k) \cong (F,o)$ has a limit, where $B_{G_N}(x,k)$ is the ball of radius $k$ around $x$ in the graph $G_N$. The limit is then a \emph{probability measure on the set of rooted graphs}.

This definition of Benjamini-Schramm convergence for discrete graphs can be extended to the case quantum graphs; we will recall how to do so in section \ref{sec:BS}. We will also recall that, if a sequence of quantum graphs has uniformly bounded degrees and edge lengths, we may always extract a subsequence which converges in the sense of Benjamini-Schramm.

The main result of \cite{BSQG} can be summed up as follows: If a sequence of closed quantum graphs $(\mathcal{Q}_N)$ has uniformly bounded degrees and edge lengths, and converges in the sense of Benjamini-Schramm to some probability measure $\mathbb{P}$ on the set of rooted quantum graphs, then  $\mu_{\mathcal{Q}_N}$ converges vaguely to some measure depending only on $\mathbb{P}$. More precisely, for any continuous compactly supported function $\chi \in C_c(\R)$, we have\footnote{Note that, in \cite{BSQG}, the authors define the empirical spectral measures on closed quantum graphs as sums of Dirac masses on the eigenvalues, which are squares of resonances. This is why the function we integrate in (\ref{eq:BSQG}) is $\chi(\sqrt{z})$.}

\begin{equation}\label{eq:BSQG}
\frac{1}{\mathcal{L}(\cQ_N)} \sum_{x\in \mathrm{Res}(\mathcal{Q}_N)} \chi(x) \underset{N\to +\infty}{\longrightarrow} \int_\R \chi(\sqrt{x}) \mathrm{d}\mu_{\mathbb{P}}(x) =: \int_{\mathbf{Q}_*} \chi\big{(}\sqrt{H_\cQ}\big{)}(\mathbf{x_0},\mathbf{x_0}) \dd \mathbb{P}(\cQ,\mathbf{x_0})
\end{equation}
where $H_\cQ$ is the Schr\"odinger operator on the limiting random quantum graph $\cQ$, $\chi\big{(}\sqrt{H_\cQ}\big{)}(\mathbf{x_0},\mathbf{x_0})$ is the value of the Schwartz kernel of the operator $\chi\big{(}\sqrt{H_\cQ}\big{)}$ at the root $\mathbf{x_0}$ and $\mathbf{Q}_*$ is a set of (equivalence classes of) rooted
quantum graphs.

\paragraph{Presentation of our main result}

The main result of this paper is a generalization\footnote{Note however, that we only consider case of Kirchhoff boundary conditions here, while more general boundary conditions are treated in \cite{BSQG}. Actually, the proofs of \cite{BSQG} are much simpler when only Kirchhoff boundary conditions are considered.} of (\ref{eq:BSQG}) to the case of a sequence of open quantum graphs converging, in the sense of Benjamini-Schramm, to a probability measure supported on the set of rooted \textbf{closed} quantum graphs:

\begin{tcolorbox}
Let $(\mathcal{Q}_N)$ be a sequence of \textcolor{black}{unbalanced} open quantum graphs  with $N$ vertices, having uniformly bounded degree and edge lengths. Suppose that $(\mathcal{Q}_N)$ converges, in the sense of Benjamini-Schramm, to a measure probability $\mathbb{P}$ supported on the set of rooted closed quantum graphs.
Then $\mathcal{Q}_N$ satisfies (\ref{eq:BSQG2}). In particular, $\mu_{\mathcal{Q}_N}$ converges vaguely to some measure on $\R$ depending only on $\mathbb{P}$.
\end{tcolorbox}

Recall that, up to extracting a subsequence, we may always suppose that $(\cQ_N)$ converges in the sense of Benjamini-Schramm. The assumption that the limit is on the set of closed quantum graphs is equivalent to supposing that the number of leads is a $o(N)$.

An informal, but illuminating, formulation of our result is the following. Let $\widetilde{\mathcal{Q}}_N$ be the quantum graph obtained from $\mathcal{Q}_N$ by removing all the leads from it. Then, if the number of leads of $\cQ_N$ is a $o(N)$, the resonances of $\cQ_N$ and the square-root of the eigenvalues of $\widetilde{\cQ}_N$ are asymptotically distributed the in same way.

In particular, this shows that, if $\cK$ is a compact set which does not intersect the real axis, then the number of resonances of $\cQ_N$ in $\cK$ is a $o(N)$.

\paragraph{Resonances mildly close to the real axis}

In Theorem \ref{th:FewRes} below, we will show more precise estimates, depending on the number of leads in $\cQ_N$. In particular, if the number of leads in $\cQ_N$ is bounded independently of $N$, we show that the number of resonances $\cQ_N$ in $[a,b] + i[-\infty, - \delta_N]$ is a $O\left(\frac{1}{\delta_N^{5/2}}\right)$. We don't believe this bound is optimal; it would be very interesting (and probably hard) to describe the asymptotic number of resonances $\cQ_N$ in $[a,b] + i[-\infty, -\delta_N]$, for instance when $\delta_N$ is a negative power of $N$. To this end, the numerical computations done in \cite[Section 7]{BarGas} could be a good starting point, although their $N$ isn't very large, and the asymptotic they consider is not very clear (they seem to consider the asymptotic $\delta_N >>1$, although there are no resonances away from a horizontal strip).

\paragraph{Relation to other works}

The large quantum graph limit we consider (with few leads) has analogues in the realm of Schrödinger operators. Namely, consider a potential $V\in L^\infty(\R)$, spatially extended (for instance, periodic with respect to $\Z^d$, or a random potential), and write $V_R(x):= V(x) \mathbf{1}_{|x|\leq R}$. It is natural to wonder if the resonances of $-\Delta + V_R$ will somehow converge to the real axis, and approach the spectrum of the fully extended operator $-\Delta+V$. The reason why such a phenomenon would happen is that, to escape from the support of the potential and go to infinity, the waves must pass through a spherical layer close to the boundary of the ball. As the ball becomes large, this layer has a volume much smaller than the total volume of the ball. Hence, this situation is very similar to having few leads on a large quantum graph.

To our knowledge, this problem of resonances for potentials with a wide support was first studied in \cite{Kl} for both periodic and random potentials in $\Z$, which are truncated outside of large intervals.  The author obtains very precise estimates on the localization of the resonances for both situations he considers. We cannot hope to obtain such precise results here, as we are working in the framework of Benjamini-Schramm convergence, which encompasses situations as different as those of periodic and random potentials. We refer the reader to \cite{DrouotThese} for a review of all the results involving resonances of potentials with a wide support.

Finally, we would like to mention that the methods we use here are very much inspired from those developed by J. Sjöstrand to study scattering resonances or the spectrum of the damped wave equation, as in \cite{Sj2} or \cite{Sj1}.

\paragraph{Organisation of the paper}
After this long introduction, we will start in section \ref{sec:General} by stating our results in a more general setting involving holomorphic families of large matrices. In section \ref{sec:QG}, we will recall the definition of quantum graphs and of their resonances, and explain how they enter the framework developed in the previous section. In section \ref{sec:Complex}, we will recall the tools of complex analysis we will use. Section \ref{sec:ProofFew} shows that there are few resonances away from the real axis, while section \ref{sec:ProofMain} gives a proof of our main result. Finally, Appendix \ref{app:Irr}, which is independent from the rest of the paper,  gives new examples of quantum graphs having scattering resonances on the real axis.

\paragraph{Acknowledgements}
It is a real pleasure to thank Frédéric Klopp for telling me about the problem of resonances for potentials with a wide support, which was the starting point of the reflection which led to this article. I also benefited a lot from discussions with Guillaume Klein and Martin Vogel on related topics.

Last but not least, I would like the thank the anonymous referee for indicating several interesting references, and pointing out a few imprecisions in the manuscript.

\section{A general framework for our results}\label{sec:General}

We would like to state our results in a general framework of holomorphic families of operators acting on some Hilbert spaces. In most of the paper, we will restrict ourselves to the finite-dimensional case, which is the most relevant for quantum graphs.

Let $Y_0\in (0, +\infty]$. We will write $\C^{Y_0}:= \{z\in \C ; \Im z <Y_0\}$.

For each $N\in \N$, let $\mathcal{H}_N$ be a Hilbert space, and let $U_N(z) : \mathcal{H}_N \longrightarrow \mathcal{H}_N$ be a sequence of trace-class operators depending in a holomorphic way on $z\in \C^{Y_0}$.
Recall that, if $U$ is a trace class-operator with trace-norm $\|U\|_1$, the determinant of $\mathrm{Id}+U$ is well-defined, and we have
\begin{equation}\label{eq:PropSchatten}
|\mathrm{\det} (\mathrm{Id} + U)| \leq e^{\|U\|_1}.
\end{equation}

Recall furthermore that, if $U$ is bounded and $V$ is trace-class, we have
\begin{equation}\label{eq:PropSchatten1}
\begin{aligned}
\|UV \|_1 &\leq \|U\| \|V\|_1\\
\|VU \|_1 &\leq \|U\| \|V\|_1.
\end{aligned}
\end{equation}

We will be interested in solutions of the equation
\begin{equation}\label{eq:DetNul}
\det (U_N(z) - \mathrm{Id})=0.
\end{equation}

A solution of (\ref{eq:DetNul}) will be called a \emph{resonance} of $U_N$. If $z$ is a solution of (\ref{eq:DetNul}), $\ker (U_N(z) - \mathrm{Id})$ is a vector space of positive finite dimension. This dimension will be called the \emph{multiplicity} of the solution of (\ref{eq:DetNul}).

If $\Omega\subset \C^{Y_0}$ is an open set, we will write
\begin{equation*}
\mathcal{N}_\Omega (U_N) := \text{number of solutions of (\ref{eq:DetNul}) in $\Omega$, counted with multiplicity}.
\end{equation*}

Our aim will be to estimate the asymptotic behaviour of $\mathcal{N}_\Omega (U_N) $ as $N\to +\infty$. To this end, we make the following assumption on $U_N$.

\begin{tcolorbox}

\begin{hyp}\label{Hyp:GenRes}
\begin{enumerate}
\item For any $N\in \N$, $U_N$ has no resonances in $\{z\in \C; 0<\Im z < Y_0\}$.

\item If $\mathcal{K}\subset \{z\in \C ; 0< \Im z < \textcolor{black}{Y_0}\}$ is a compact set, we may find a constant $\mathrm{C}_0(\mathcal{K})>0$, independent of $N$, such that for all $N\in \N$ and all $z\in \mathcal{K}$, we have 
\begin{equation}\label{eq:HypResolv0}
\left\| \left(\mathrm{Id}- U_N(z)\right)^{-1}\right\|\leq \frac{\mathrm{C_0}(\mathcal{K})}{|\Im z|}.
\end{equation}

\item \textcolor{black}{There exists $Y_1 >0$ such that, for any $N\in \N$, $U_N$ has no resonances in $\{z\in \C; \Im z \leq -Y_1\}$. Furthermore,  if $\mathcal{K}\subset \{z\in \C ; \Im z \leq - Y_1\}$ is a compact set, we may find a constant $\mathrm{C}'_0(\mathcal{K})>0$, independent of $N$, such that for all $N\in \N$ and all $z\in \mathcal{K}$, we have 
\begin{equation}\label{eq:HypResolv2}
\left\| \left(\mathrm{Id}- U_N(z)\right)^{-1}\right\|\leq C_0'(\mathcal{K}).
\end{equation}}

\item Let $\mathcal{K}\subset \C^{Y_0}$ be a compact set. There exists $\mathrm{C_1(\mathcal{K})}>0$ such that for all $N\in \N$ and all $z\in \mathcal{K}$, we have
\begin{equation}\label{eq:BorneGlobaleU}
\left\| U_N(z) \right\|\leq \mathrm{C_1}(\mathcal{K}).
\end{equation}
\end{enumerate}
\end{hyp}
\end{tcolorbox}
Note that, by analyticity of $U_N(z)$, equation (\ref{eq:BorneGlobaleU}) implies that there exists a constant 
$\mathrm{C_1'(\mathcal{K})}>0$ such that for all $N\in \N$ and all $z\in \mathcal{K}$, we have
\begin{equation}\label{eq:BorneGlobaleDerivU}
\left\| \frac{\mathrm{d}U_N(z)}{\mathrm{d}z} \right\|\leq \mathrm{C'_1}(\mathcal{K}).
\end{equation}

Our main assumption is that $U_N$ is close, in trace-norm, to an operator having all its resonances on the real axis. In the case of quantum graphs, these new operators will be built from the quantum graphs where all the leads are removed.
\begin{tcolorbox}
\begin{hyp}\label{Hyp:TildeU}
We suppose that, for each $N\in \N$, there exists another family $\widetilde{U}_N(z) : \mathcal{H}_N \longrightarrow \mathcal{H}_N$ \textcolor{black}{of invertible operators} depending in a holomorphic way on $z\in \C^{Y_0}$, such that the following holds.
\begin{enumerate}
\item  \textcolor{black}{If $\mathcal{K}\subset \C^{Y_0}$ is a compact set, there exists a constant $c(\mathcal{K})$ such that the following holds for all $N\in \N$ and all $z\in \mathcal{K}$
\begin{equation}\label{eq:BorneSurLesNormes}
\begin{aligned}
\Im z >0  &\Longrightarrow \|\widetilde{U}_N(z)\| \leq \frac{1}{1- c(\mathcal{K}) \Im z}\\
\Im z <0  &\Longrightarrow \|\widetilde{U}_N^{-1}(z)\| \leq \frac{1}{1- c(\mathcal{K}) |\Im z|}.
\end{aligned}
\end{equation}}

\textcolor{black}{In particular, for all compact set $\mathcal{K}\subset \C^{Y_0} \setminus \R$, there exists $\mathrm{C}_2(\mathcal{K})$ such that for all $N\in \N$ and all $z\in \mathcal{K}$, we have}

\begin{equation}\label{eq:HypResolv}
\left\| \left(\mathrm{Id}- \widetilde{U}_N(z)\right)^{-1}\right\|\leq \frac{\mathrm{C_2(\mathcal{K})}}{|\Im z|}.
\end{equation}
\item There exists a sequence $(\gamma_N)$ of positive numbers such that
 the following holds.
For any compact set $\mathcal{K}\subset \C^{Y_0}$, we may find a constant $C(\mathcal{K})$ independent of $N$ such that
\begin{equation}\label{eq:DefDiffTrace}
\forall z\in \mathcal{K},~~ \left\| U_N(z) - \widetilde{U}_N(z)\right\|_1 \leq C(\mathcal{K}) \gamma_N.
\end{equation}
\end{enumerate}
\end{hyp}
\end{tcolorbox}

Note that (\ref{eq:HypResolv}) implies that the resonances of $\widetilde{U}_N(z)$ are all on the real axis. In the next section, if we are working with a quantum graph having $N$ vertices, $\gamma_N$ will be the number of leads, which should be small compared to $N$.

Our aim will be to count resonances in boxes. If $a<b$, $c<d$, we will write
$$\mathcal{R}_{a,b,c,d} := \{z\in \C ~|~ a\leq \Re z \leq b \text{ and } c\leq \Im z \leq d\}$$
and
$$\mathcal{N}_{a,b,c,d}(U_N) := \mathcal{N}_{\mathcal{R}_{a,b,c,d}}(U_N).$$

If $a<b$, we will also write
$$\mathcal{N}_{a,b}(\widetilde{U}_N) := \text{ number of resonances of $\widetilde{U}_N$ in $[a,b]$ counted with multiplicity}.$$

Our first theorem gives a bound on the number of resonances of $U_N$ away from the real axis. 

\begin{tcolorbox}[breakable, enhanced]
\begin{theoreme}\label{th:FewRes2}
Let $(U_N)$ and $(\widetilde{U}_N)$ satisfy Hypotheses \ref{Hyp:GenRes}, \ref{Hyp:TildeU}. Let $a_1< a_2 \in \R$, and let $a_3>0$. There exists a constant $C$ depending on $a_1, a_2, a_3$ and on the constants in Hypotheses \ref{Hyp:GenRes} and \ref{Hyp:TildeU}, but not on $N$, such that the following holds.

Let $(\delta_N)$ be a bounded sequence of positive numbers, possibly going to zero. Then there exists $N_0$ such that for all $N\geq N_0$,
\begin{equation}\label{eq:RienLoinDuReel}
\mathcal{N}_{a_1,a_2, - a_3, - \delta_N} (U_N)\leq \frac{C \gamma_N}{\delta_N^{5/2}}.
\end{equation}

In particular, if $\gamma_N = o(N)$ and $\Omega\subset \C$ is a bounded set such that $\overline{\Omega}\cap \R = \emptyset$, we have
\begin{equation}\label{eq:RienLoinDuReel2}
 \frac{1}{|N|}\mathcal{N}_{\Omega}(U_N)\underset{N\to \infty}{\longrightarrow} 0.
 \end{equation}
\end{theoreme}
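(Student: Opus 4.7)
The plan is to reduce the counting problem to that of zeros of a scalar holomorphic function whose size I can control in trace norm. Set $f_N(z) := \det(U_N(z) - \mathrm{Id})$ and $\tilde f_N(z) := \det(\widetilde U_N(z) - \mathrm{Id})$. By (\ref{eq:HypResolv}), $\widetilde U_N(z) - \mathrm{Id}$ is invertible away from the real axis, so I may factor
$$f_N(z) = \tilde f_N(z) \, g_N(z), \qquad g_N(z) := \det\!\Bigl(\mathrm{Id} + \bigl(\widetilde U_N(z) - \mathrm{Id}\bigr)^{-1}\bigl(U_N(z) - \widetilde U_N(z)\bigr)\Bigr).$$
Off the real axis, $\tilde f_N$ is non-vanishing, so the zeros of $g_N$, counted with multiplicity, coincide with the resonances of $U_N$. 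Combining (\ref{eq:PropSchatten}), (\ref{eq:PropSchatten1}), (\ref{eq:HypResolv}) and (\ref{eq:DefDiffTrace}), the crucial a priori bound, valid on any compact $\mathcal K$ of the open lower half-plane, takes the form
$$\log\lvert g_N(z)\rvert \leq \bigl\|(\widetilde U_N(z) - \mathrm{Id})^{-1}\bigr\| \; \|U_N(z) - \widetilde U_N(z)\|_1 \leq \frac{C(\mathcal K)\, \gamma_N}{|\Im z|}.$$

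To count zeros of $g_N$ in $\mathcal R_{a_1, a_2, -a_3, -\delta_N}$, I would apply a Jensen-type inequality to $g_N$ on disks remaining strictly below the real axis, with radii matched to their distance from it and in particular of order $\delta_N$ for disks just below the line $\Im z = -\delta_N$. The upper bound above gives $\log|g_N| \leq C\gamma_N/\delta_N$ on the relevant circles. To turn this into a zero count, Jensen also requires a lower bound on $|g_N|$ at each disk center; this is obtained by starting in the ``safe'' region $\Im z \leq -Y_1$ provided by Hypothesis \ref{Hyp:GenRes}(3) and propagating the estimate inward via the complex-analytic tools of Section \ref{sec:Complex}. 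Summing the per-disk contributions over a family of disks covering the box yields an upper bound of the announced form $C \gamma_N / \delta_N^{5/2}$, the exponent $5/2$ reflecting the combined cost of the $O(\delta_N^{-2})$ disks needed for the covering, the $O(\gamma_N / \delta_N)$ Jensen count per disk, and the loss incurred in locating Jensen centers where $|g_N|$ is not exceptionally small.

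The main technical obstacle is precisely the lower bound on $|g_N|$ at those Jensen centers. Since $\|K_N(z)\|_1$ may grow with $\gamma_N$, the naive inequality $|\det(\mathrm{Id} + K_N)| \geq 1 - \|K_N\|_1$ fails as soon as $\gamma_N$ is not small. The remedy is to use subharmonicity of $\log|g_N|$ together with the global upper bound, via a Cartan-type exceptional-set argument (of the kind typically packaged in Section \ref{sec:Complex}): outside a small set, one finds points at which $\log|g_N|$ is at least a controlled negative multiple of $\gamma_N/\delta_N$, which is exactly the input needed to close the Jensen counting.

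For the qualitative corollary (\ref{eq:RienLoinDuReel2}), given a bounded $\Omega \subset \C$ with $\overline\Omega \cap \R = \emptyset$, I apply (\ref{eq:RienLoinDuReel}) with the fixed constant $\delta_N := \mathrm{dist}(\overline\Omega, \R) > 0$ and any rectangle $[a_1, a_2] \times [-a_3, -\delta_N] \supset \Omega$; this gives $\mathcal N_\Omega(U_N) \leq C\gamma_N$, and dividing by $N$ and using the assumption $\gamma_N = o(N)$ finishes the proof.
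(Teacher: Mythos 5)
Your skeleton matches the paper's: the factorization $\det(\mathrm{Id}-U_N)=\det(\mathrm{Id}-\widetilde U_N)\det(\mathrm{Id}-K_N)$ with $K_N=(\mathrm{Id}-\widetilde U_N)^{-1}(U_N-\widetilde U_N)$, the trace-norm upper bound $\log|g_N(z)|\leq C\gamma_N/|\Im z|$ from (\ref{eq:PropSchatten}), (\ref{eq:HypResolv}) and (\ref{eq:DefDiffTrace}), and Jensen's formula. But there are two genuine gaps. First, the lower bound at the Jensen centers, which you yourself flag as ``the main technical obstacle,'' is left unproved: you appeal to a Cartan-type exceptional-set argument that you do not carry out, and which is not in Section \ref{sec:Complex} in the form you would need. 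The paper sidesteps this entirely by placing every Jensen center at depth $\Im z_0=-a_3/2$ with $a_3/2>Y_1$; there, point 3 of Hypothesis \ref{Hyp:GenRes} bounds $\|(\mathrm{Id}-U_N(z_0))^{-1}\|$ uniformly, and the identity $(\mathrm{Id}-K_N)^{-1}=\mathrm{Id}-(\mathrm{Id}-U_N)^{-1}(\widetilde U_N-U_N)$ gives $-\log|\det(\mathrm{Id}-K_N(z_0))|=O(\gamma_N)$ directly — no exceptional sets and no ``propagation inward.''

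Second, your covering does not produce the exponent $5/2$; indeed your own bookkeeping ($O(\delta_N^{-2})$ disks times $O(\gamma_N/\delta_N)$ per disk) adds up to $O(\gamma_N\delta_N^{-3})$, which is weaker than the claim. The paper uses a single row of \emph{large} disks $D(x_k-ia_3/2,\,a_3/2-\delta_N)$ with Jensen annulus parameter $\varepsilon_N\asymp\delta_N$: the count per disk is $\frac{1}{\log(1+\varepsilon_N)}\cdot O(\gamma_N/\delta_N)=O(\gamma_N\delta_N^{-2})$, and — this is the geometric point missing from your sketch — such a disk contains the entire slab $\{-a_3<\Im z<-2\delta_N\}$ over a horizontal window of width $\asymp\sqrt{\delta_N}$ around $x_k$, so only $O(\delta_N^{-1/2})$ disks are needed, giving $O(\gamma_N\delta_N^{-5/2})$ in total. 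Your final reduction of (\ref{eq:RienLoinDuReel2}) to (\ref{eq:RienLoinDuReel}) with $\delta_N=\mathrm{dist}(\overline\Omega,\R)$ is fine.
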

\end{tcolorbox}

Equation (\ref{eq:RienLoinDuReel2}) follows from (\ref{eq:RienLoinDuReel}) by taking a sequence $(\delta_N)$ converging to zero slowly enough.

Note that equation (\ref{eq:RienLoinDuReel}) is non-trivial only when $\delta_N$ is not chosen too small. 

\begin{rem}
We will see in Theorem \ref{th:ResClose} that, if the dimension of $\mathcal{H}_N$ grows like $N$, then, under reasonable assumptions, $\mathcal{N}_{a_1,a_2, - a_3, 0} (U_N)$ is of the same order as $N$.
Hence, equation (\ref{eq:RienLoinDuReel}) implies that most of the resonances in $\mathcal{R}_{a_1,a_2, - a_3, 0}$ are actually in $\mathcal{R}_{a_1,a_2, - \delta_N, 0}$, provided we have $\delta_N \gg \left(\frac{\gamma_N}{N}\right)^{2/5}$. The quantity $\left(\frac{\gamma_N}{N}\right)^{2/5}$ may thus be thought of as a \textcolor{black}{lower bound for the imaginary part of typical resonances}.

We would like to insist that we don't believe the power $\frac{5}{2}$ appearing in (\ref{eq:RienLoinDuReel}) to be optimal. It would be very interesting to carry out numerical simulations to see what is the best power we can expect here.
\end{rem}

\begin{exemple}
Let $A_N$ be a Hermitian $N\times N$ matrix, and let $B_N$ be a negative Hermitian $N\times N$ matrix. Let $M_N := A_N + iB_N$. Suppose that $\|A_N\|, \|B_N\|\leq C$ for some $C$ independent of $N$.

We want to show that, if $\|B_N\|_1$ is small, $M_N$ has few eigenvalues away from the real axis.

The matrix $B_N$ being negative Hermitian, if $z$ is an eigenvalue of $M_N$, we must have $\Im z \leq 0$. \textcolor{black}{Furthermore, since $\|A_N\|, \|B_N\|\leq C$, if $z$ is an eigenvalue of $M_N$, we must have $|z|\leq 2C$.}

For all $z\in \C$ such that $\Im z <1$, the matrix $ \textcolor{black}{(z-i)} \mathrm{Id}- A_N - iB_N$ is invertible, and we set
$$U_N(z) := \big( \textcolor{black}{(z-i)} \mathrm{Id}- A_N - iB_N \big)^{-1}  \big( \textcolor{black}{(-z-i)} \mathrm{Id}+ A_N + iB_N \big).$$
We thus have
$$\left(U_N(z) v = v \right) \Longleftrightarrow \left( M_N v = z v\right).$$

Taking $Y_0 = \frac{1}{2}$, we have, for all $z\in \C$ with $\Im z < Y_0$, $ \left\|\left(\textcolor{black}{(z-i)} \mathrm{Id}- A_N - iB_N\right)^{-1}\right\| \leq 2$,  so that
$$\textcolor{black}{\|}U_N(z)\textcolor{black}{\|}\leq 2 (1+|z| + \|A_N\| + \|B_N\|).$$

\textcolor{black}{We have
\begin{align*}
U_N(z) - \mathrm{Id} = 2 \big( (z-i) \mathrm{Id}- A_N - iB_N \big)^{-1} \left( A_N + i B_N - z \mathrm{Id}\right).
\end{align*}}

\textcolor{black}{
In particular, when $z$ is not an eigenvalue of $M_N$, we have $\|\left(U_N(z) - \mathrm{Id}\right)^{-1}\|\leq  \frac{1+|z|+2C}{2} \left\| ( A_N +i B_N - z \mathrm{Id})^{-1}\right\|$. Points 2 and 3 of Hypothesis \ref{Hyp:GenRes} follow easily, taking $Y_1 = 3C$.}

We also define for all $z\in \C$ such that $\Im z <1$
$$\widetilde{U}_N(z) := \left( \textcolor{black}{(z-i)} \mathrm{Id}- A_N  \right)^{-1}  \left( \textcolor{black}{(-i-z)} \mathrm{Id}+ A_N  \right).$$

\textcolor{black}{Note that $\widetilde{U}_N(z)$ can be diagonalized in an orthonormal basis. More precisely , if $v\in \R^N$ and  $\lambda\in \R$ are such that $Av = \lambda v$, we have $ \widetilde{U}_N(z) v =  \left((z-i) \mathrm{Id}- \lambda  \right)^{-1}  \left((-i-z) \mathrm{Id}+ \lambda  \right)v$. Equation (\ref{eq:BorneSurLesNormes}) follows easily.}

Finally, we have
\begin{align*}
\left\| U_N(z) - \widetilde{U}_N(z)\right\|_1 &=\Big\| i   \big( \textcolor{black}{(z-i)} \mathrm{Id}- A_N - iB_N \big)^{-1}  B_N  \\
&+  \left[ \big( \textcolor{black}{(z-i)} \mathrm{Id}- A_N - iB_N \big)^{-1} - \left( \textcolor{black}{(z-i)} \mathrm{Id}- A_N  \right)^{-1}   \right]   \big( \textcolor{black}{(-i-z)} \mathrm{Id}+ A_N  \big) \Big\|_1\\
&\leq  2 \big(1+|z| + \|A_N\| + \|B_N\|\big) \|B_N\|_1 \\
& + 2 \big{(}1+|z| + \|A_N\|\big{)} \left\| \big( \textcolor{black}{(z-i)} \mathrm{Id}- A_N - iB_N \big)^{-1} \right\| \left\| \big( \textcolor{black}{(z-i)} \mathrm{Id}- A_N  \big)^{-1} \right\| \|B_N\|_1\\
&\leq C(z, \|A_N\|, \|B_N\|) \|B_N\|_1.
\end{align*}

Therefore, our matrices satisfy all the assumptions from Hypotheses \ref{Hyp:GenRes} and \ref{Hyp:TildeU}, with $\gamma_N = \|B_N\|_1$. We deduce that, for any sequence $(\delta_N)$ of positive numbers, the number of eigenvalues of $M_N$ in $\mathcal{R}_{a_1,a_2, - a_3, - \delta_N}$ is a $O\left( \frac{\|B_N\|_1}{\delta_N^{5/2}}\right)$.

This result seems elementary, but we could not find a trace of it in the literature.
\end{exemple}

To state our next theorem, we will need to restrict ourselves to the finite-dimensional setting. The main reason for this is that we want the operators $\widetilde{U}_N$ to be unitary on the real axis; this assumption is compatible to being trace-class only in finite dimension.

\begin{tcolorbox}
\begin{hyp}\label{HypRenforcee}
\begin{enumerate}
\item There exists $D_0, D_1$ such that $\mathcal{H}_N = \C^{d_N}$, with
\begin{equation}\label{eq:BornesDimension}
D_0 N \leq d_N \leq D_1 N.
\end{equation}

\item The sequence $(\gamma_N)$ from Hypothesis \ref{Hyp:TildeU} satisfies $\gamma_N = o(N)$.

\item The operators  $\widetilde{U}_N(x)$ from Hypothesis \ref{Hyp:TildeU} are unitary for all $x\in \R$.

\item  For any compact set $\mathcal{K}\subset \R$, there exists $c(\mathcal{K})>0$ such that for all $x\in \mathcal{K}$ and all $N\in \N$,
\begin{equation}\label{eq:Positivite}
-i \frac{\mathrm{d}\widetilde{U}_N(x) }{\mathrm{d}x} \widetilde{U}_N(x)  - c(\mathcal{K}) \mathrm{Id} ~~ \text{ is a positive self-adjoint operator}.
\end{equation}

\item For any $z\in \C$, the matrix $U_N(z)$ is \textcolor{black}{similar} to $U_N^*(-\overline{z})$ 
%or $\left(U^*(-\overline{z})\right)^{-1}$
, and the matrix $\widetilde{U}_N(z)$ is \textcolor{black}{similar} to $\widetilde{U}^*_N(-\overline{z})$.
% or $\left(\widetilde{U}^*_N(-z)\right)^{-1}$.
\end{enumerate}
\end{hyp}
\end{tcolorbox}

Note that, thanks to (\ref{eq:BornesDimension}), if $U : \mathcal{H}_N \longrightarrow \mathcal{H}_N$, we have
\begin{equation}\label{eq:TraceBound}
\|A\|_1 \leq D_1 N \|A\|.
\end{equation} 

Furthermore, the last assumption of Hypothesis \ref{HypRenforcee} implies that the resonances of $U_N$ and $\widetilde{U}_N$ are symmetric with respect to the imaginary axis. 

Our next theorem says that, in boxes close to the real axis, $U_N$ and $\widetilde{U}_N$ have asymptotically the same number of resonances, up to changing slightly the size of the boxes.

\begin{tcolorbox}[breakable, enhanced]
\begin{theoreme}\label{th:ResClose}
Let $(U_N)$ and $(\widetilde{U}_N)$ satisfy Hypotheses \ref{Hyp:GenRes}, \ref{Hyp:TildeU} and \ref{HypRenforcee}.

Let $a<b$, and let $c>0$. 
For any $\varepsilon>0$, we have
\begin{align*}
 \limsup\limits_{N\to +\infty} \frac{1}{N}\mathcal{N}_{a+ \varepsilon, b- \varepsilon, -ic, 0}(U_N) &\leq \limsup\limits_{N\to +\infty} \frac{1}{N}\mathcal{N}_{a, b}(\widetilde{U}_N)\\
  \liminf\limits_{N\to +\infty} \frac{1}{N}\mathcal{N}_{a- \varepsilon, b+ \varepsilon, -ic, 0}(U_N)&\geq \liminf\limits_{N\to +\infty} \frac{1}{N}\mathcal{N}_{a, b}(\widetilde{U}_N)
 \end{align*}
\end{theoreme}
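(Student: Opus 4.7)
I would prove the theorem by a homotopy argument combined with a quantitative stability estimate for zero-trajectories under trace-class perturbations. Consider the convex family $\widetilde U_N^{(s)}(z) := (1-s)\widetilde U_N(z) + s U_N(z)$, $s \in [0,1]$, and set $F^{(s)}(z) := \det(\mathrm{Id} - \widetilde U_N^{(s)}(z))$; these are holomorphic in $z \in \C^{Y_0}$ and smooth in~$s$. Their zeros (counted with multiplicity) interpolate between the real resonances of $\widetilde U_N$ at $s=0$ and the resonances of $U_N$ at $s=1$, moving continuously as $s$ varies. The homotopy has trace-norm size $\|\widetilde U_N^{(s)} - \widetilde U_N\|_1 \leq \|U_N - \widetilde U_N\|_1 = O(\gamma_N)$ by~\eqref{eq:DefDiffTrace}, which is $o(N)$ by item~2 of Hypothesis~\ref{HypRenforcee}.

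The core of the argument is a stability estimate: for each $\delta > 0$, there exists a matching between the zero-sets of $F^{(0)}$ and $F^{(1)}$ such that all but $O(\gamma_N/\delta) = o(N)$ matched pairs are at distance at most~$\delta$. I would establish this by integrating the identity
$$\frac{\partial}{\partial s} \log F^{(s)}(z) \;=\; -\,\mathrm{Tr}\!\bigl[(\mathrm{Id} - \widetilde U_N^{(s)}(z))^{-1}\,(U_N(z) - \widetilde U_N(z))\bigr]$$
over $s \in [0,1]$, bounding the result in absolute value by $C\gamma_N/|\Im z|$ on compact subsets away from~$\R$ via~\eqref{eq:HypResolv}, \eqref{eq:DefDiffTrace}, and~\eqref{eq:PropSchatten1}. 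A Rouché-type argument applied to disks of radius~$\delta$ at a fixed distance from~$\R$ then converts this logarithmic variation into a bound on the count of zero-trajectories of diameter greater than~$\delta$. To handle trajectories that pass near the real axis (where the pointwise bound degenerates), I would combine this with Theorem~\ref{th:FewRes2}, which confines all but $O(\gamma_N/\eta^{5/2})$ zeros of $F^{(s)}$ to the strip $|\Im z| \leq \eta$ for any $\eta > 0$.

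Given the stability estimate, the theorem follows by geometric bookkeeping. For the lower bound, fix $\delta = \varepsilon/2$: the $\mathcal{N}_{a,b}(\widetilde U_N) - o(N)$ zeros of $F^{(0)}$ on $[a,b]$ whose trajectories have diameter at most~$\varepsilon/2$ end, at $s=1$, within $\varepsilon/2$ of $[a,b] \subset \R$, and hence inside $[a-\varepsilon, b+\varepsilon] \times [-c, 0]$ (using items~1 and~3 of Hypothesis~\ref{Hyp:GenRes} to confine the endpoints to the lower half-plane with $|\Im z| \leq c$). This yields $\mathcal{N}_{a-\varepsilon, b+\varepsilon, -c, 0}(U_N) \geq \mathcal{N}_{a,b}(\widetilde U_N) - o(N)$. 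Running the homotopy in reverse from $s=1$ to $s=0$ yields the symmetric upper bound $\mathcal{N}_{a+\varepsilon, b-\varepsilon, -c, 0}(U_N) \leq \mathcal{N}_{a,b}(\widetilde U_N) + o(N)$. The hardest part will be making the stability estimate rigorous in the presence of possible coincidences of zero-trajectories; I would circumvent the difficulty of tracking individual zeros by working directly with counting integrals on small rectangles $R$ bounded away from~$\R$, showing that $\mathcal{N}_R(F^{(s)})$ varies by $o(N)$ in~$s$ via a trace-norm bound on $\partial_s\partial_z\log F^{(s)}$, and using Theorem~\ref{th:FewRes2} to absorb the resonances located in a narrow strip near~$\R$.
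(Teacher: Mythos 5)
Your homotopy $\widetilde U_N^{(s)}=(1-s)\widetilde U_N+sU_N$ breaks down for $0<s<1$: none of the hypotheses control the intermediate operators. The perturbation $U_N-\widetilde U_N$ is small in \emph{trace} norm but not in operator norm, so you cannot bootstrap from (\ref{eq:HypResolv}) via a Neumann series to bound $\|(\mathrm{Id}-\widetilde U_N^{(s)}(z))^{-1}\|$; consequently the identity for $\partial_s\log F^{(s)}$ and the bound $C\gamma_N/|\Im z|$ are unjustified for intermediate $s$, the zeros of $F^{(s)}$ need not stay in the lower half-plane or in any bounded strip, and Theorem \ref{th:FewRes2} (which you invoke to confine them) simply does not apply to $F^{(s)}$ --- its proof uses Hypotheses \ref{Hyp:GenRes} and \ref{Hyp:TildeU} for the pair $(U_N,\widetilde U_N)$, not for a convex combination.

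The deeper gap is that your ``stability estimate'' is asserted, not proved, and the sketch offered for it cannot produce it: every zero of $F^{(0)}$ lies \emph{on} $\R$, so Rouch\'e on disks at a fixed positive distance from $\R$ sees none of them, and the matching must be carried out precisely where both $\|(\mathrm{Id}-\widetilde U_N(z))^{-1}\|$ and $\|(\mathrm{Id}-U_N(z))^{-1}\|$ blow up. Converting a bound on the variation of $\log\det$ into a bound on the variation of the number of zeros requires \emph{lower} bounds on the determinants along the contour, and on $\R$ one has $\det(\mathrm{Id}-\widetilde U_N(x))=\prod_k(1-e^{i\theta_k(x)})$, which can be extraordinarily small when many eigenphases cluster near $0$; the net flux of zeros through the vertical segments $\{a\}\times[-\eta,0]$ and $\{b\}\times[-\eta,0]$ is exactly what has to be controlled and is left untouched by your argument. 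This is why the paper (i) selects good abscissae $a^\pm$ where $|\det(\mathrm{Id}-U_N(a^\pm))|\ge e^{-CN}$ via Jensen's formula and Lemma \ref{lem:SjMagic2}, (ii) rotates the $O(N/|\log\xi_N|)$ clustered eigenvalues away from $1$ with $\Sigma_N^\pm$ so that $\|(\mathrm{Id}-\widetilde U_N^\pm(\pm a^\pm))^{-1}\|\le \xi_N^{-1}$, (iii) uses the monotonicity of the eigenphases coming from (\ref{eq:Positivite}) to express $\mathcal N_{-a,a}(\widetilde U_N)$ as a winding number, and (iv) runs the argument principle on a thin rectangle using the Blaschke--Harnack estimates of Section \ref{sec:Max}. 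Your proposal uses neither the unitarity of $\widetilde U_N$ on $\R$, nor (\ref{eq:Positivite}), nor the symmetry in item 5 of Hypothesis \ref{HypRenforcee}; since these are exactly the hypotheses that make the endpoint analysis possible, the argument as written cannot be completed.
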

\end{tcolorbox}

Theorem \ref{th:ResClose} becomes more explicit if we consider the empirical spectral measures of $U_N$ and $\widetilde{U}_N$, and suppose that the empirical spectral measures of $\widetilde{U}_N$ converge. 

 For any $N\in \N$, let us denote by $(x_{j,N})_{j\in J_N}$ the resonances of $(U_N)$, and by $(x_{j,N})_{j\in \widetilde{J}_N}$ the resonances of $\widetilde{U}_N$.
For each $N$, these resonances can be in finite or infinite number, but are isolated. Therefore, the measure
$$\mu_N := \sum_{j\in J_N} \delta_{x_{j,N}}$$
is a locally finite measure on $\C$, while 
$$\widetilde{\mu}_N := \sum_{j\in \widetilde{J}_N} \delta_{x_{j,N}}$$
is a locally finite measure on $\R$.

\begin{tcolorbox}
\begin{corollaire}\label{cor}
Let $(U_N)$ and $(\widetilde{U}_N)$ satisfy Hypotheses \ref{Hyp:GenRes}, \ref{Hyp:TildeU} and \ref{HypRenforcee}. 
Suppose that there exists a locally finite Borel measure $\mu$ on $\R$ such that $\frac{1}{N}\widetilde{\mu}_N$ converges vaguely to $\mu$.

Then $\frac{1}{N}\mu_N$ converges vaguely to $\mu$.
\end{corollaire}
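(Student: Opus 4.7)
The plan is to deduce the vague convergence of $\tfrac{1}{N}\mu_N$ on $\C$ from that of $\tfrac{1}{N}\widetilde{\mu}_N$ on $\R$, by combining the two previous theorems: Theorem \ref{th:FewRes2} confines most resonances of $U_N$ to an arbitrarily thin strip below the real axis, while Theorem \ref{th:ResClose} compares horizontal counts of $U_N$ with those of $\widetilde U_N$.

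Fix $f\in C_c(\C)$ with $\supp f\subset [A,B]\times [-M,M]$. By Hypothesis \ref{Hyp:GenRes} every resonance of $U_N$ lies in $\{-Y_1\le \Im z\le 0\}$, so for any $\eta>0$ I would split
\[
\tfrac{1}{N}\int f\,d\mu_N \;=\; \tfrac{1}{N}\!\!\!\sum_{\Im z_j\in[-\eta,0]}\!\!\!f(z_j)\;+\;\tfrac{1}{N}\!\!\!\sum_{\Im z_j\in[-Y_1,-\eta)}\!\!\!f(z_j).
\]
Theorem \ref{th:FewRes2}, applied with the constant sequence $\delta_N=\eta$, bounds the second piece by $C\|f\|_\infty \gamma_N/(N\eta^{5/2})$, which tends to $0$ since $\gamma_N=o(N)$. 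For the first piece, uniform continuity of $f$ gives $|f(x+iy)-f(x)|\le\omega_f(\eta)$ whenever $|y|\le\eta$, so this piece differs from $\tfrac{1}{N}\sum_{\Im z_j\in[-\eta,0]}g(\Re z_j)$, where $g(x):=f(x,0)\in C_c(\R)$, by at most $\omega_f(\eta)\cdot\tfrac{1}{N}\mathcal{N}_{A,B,-\eta,0}(U_N)$. The bound on $\mathcal{N}_{A,B,-\eta,0}(U_N)$ provided by Theorem \ref{th:ResClose} (combined with the local finiteness of $\mu$) makes this error $O(\omega_f(\eta))$ uniformly in $N$.

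It then suffices to show that, for any fixed $c>\eta$, the pushforward measure $\nu_N(E):=\tfrac{1}{N}\mu_N(E\times[-c,0])$ on $\R$ converges vaguely to $\mu$: combined with $\tfrac{1}{N}\mathcal{N}_{A,B,-c,-\eta}(U_N)=o(1)$, this yields $\tfrac{1}{N}\sum_{\Im z_j\in[-\eta,0]}g(\Re z_j) \to \int g\,d\mu$. But Theorem \ref{th:ResClose} gives, at continuity points $a<b$ of $\mu$ and for every $\varepsilon>0$,
\[
\mu([a+\varepsilon,b-\varepsilon])\;\le\; \liminf_N \nu_N([a,b]) \;\le\; \limsup_N \nu_N([a,b]) \;\le\; \mu([a-\varepsilon,b+\varepsilon]),
\]
and letting $\varepsilon\to 0$ forces $\nu_N([a,b])\to\mu([a,b])$, which is precisely vague convergence on $\R$. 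Sending $N\to\infty$ first and then $\eta\to 0$ in the preceding steps gives $\tfrac{1}{N}\int f\,d\mu_N\to\int_\R g\,d\mu=\int f\,d\mu$, as desired.

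I expect no genuine obstacle beyond careful bookkeeping. The two delicate points are keeping the order of limits straight ($N\to\infty$ before $\eta\to 0$), and translating convergence of counts on intervals into vague convergence of $\nu_N$ on $\R$, which uses the local finiteness of $\mu$ in the standard Portmanteau-type way.
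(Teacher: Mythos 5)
Your proof is correct and follows essentially the same route as the paper: Theorem \ref{th:FewRes2} discards the resonances below a thin strip, Theorem \ref{th:ResClose} together with the Portmanteau theorem and the regularity of $\mu$ gives vague convergence of the horizontal counting measures $\nu_{N}$, and uniform continuity of $f$ handles the replacement of $f(z)$ by $f(\Re z)$ near the real axis. The only difference is cosmetic — you use a sharp splitting in $\Im z$ where the paper uses smooth cutoffs $\chi_\delta(y)$ — and your ordering of limits ($N\to\infty$ before $\eta\to 0$) matches the paper's.
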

\end{tcolorbox}

\begin{proof}[Proof that Theorem \ref{th:ResClose} implies Corollary \ref{cor}]

\textbf{Step 1:}
Let us fix $c>0$. We define a sequence of measures $\nu_{N\textcolor{black}{,c}}$ on $\R$ by 
$$\nu_{N,\textcolor{black}{c}}(\mathcal{I}) =\frac{1}{N} \times \left( \text{number of resonances of $U_N$ in $\mathcal{I}+ i [-c, 0]$}\right).$$

Let $a'<b'$ and $0<\varepsilon < \frac{b'-a'}{6}$. We have
\begin{align*}
\liminf\limits_{N\to +\infty} \nu_{N,c}\big((a', b' )\big) &\geq  \liminf\limits_{N\to +\infty} \nu_{N,c}\big([a'+\varepsilon, b'- \varepsilon] \big)\\
&\geq  \liminf\limits_{N\to +\infty} \widetilde{\mu}_N\big([a'+2\varepsilon, b'- 2\varepsilon] \big)~~\text{by Theorem \ref{th:ResClose}}\\
&\geq \liminf\limits_{N\to +\infty} \widetilde{\mu}_N\big((a'+2\varepsilon, b'- 2\varepsilon) \big)\\
&\geq \mu \big((a'+2\varepsilon, b'- 2\varepsilon) \big) ~~\text{ by the Portmanteau theorem}\\
&\geq  \mu \big([a'+3\varepsilon, b'- 3\varepsilon] \big).
\end{align*}

Now, recall that the measure $\mu$ is regular, so that the limit of the right-hand side as $\varepsilon\to 0$ is $\mu\big( (a,b)\big)$. All in all, we have, for all $a'<b'$
$$\liminf\limits_{N\to +\infty} \nu_{N,c}\big((a', b' )\big) \geq  \mu\big((a',b')\big).$$
We may thus apply Portmanteau theorem to deduce that, for any $c>0$, $\nu_{N,c}$ converges vaguely to $\mu$ as $N\to +\infty$.

\textbf{Step 2:}
Let $f\in C_c^\infty(\C)$. Our aim is to show that $\mathbb{E}_{\mu_N} \left[f\right] \longrightarrow \mathbb{E}_{\mu} \left[f\right]$. 

For any $\delta>0$, let $\chi_\delta \in C_c^\infty(\R; [0,1])$ be supported in $(-2\delta,  2\delta)$, and equal to one on $(-\delta, \delta)$. We define two functions $\widehat{f}_\delta, \widehat{f}'_\delta \in C^\infty(\C)$  by
\begin{align*}
\widehat{f}_\delta (x+i y) &= \chi_\delta(y) f(x+iy)\\
\widehat{f}'_\delta (x+i y) &= \chi_\delta(y) f(x).
\end{align*}

Note that, when $f$ takes only non-negative values, we have for any $\delta>0$
$$\E_{\nu_{N,\delta}}[f]\leq \E_{\mu_N} \left[\widehat{f}'_\delta\right] \leq \E_{\nu_{N, 2\delta}}[f],$$
so that, by the first step, for any $\delta>0$,
\begin{equation}\label{eq:Convf}
\E_{\mu_N} \left[\widehat{f}'_\delta\right] \underset{N\to +\infty}{\longrightarrow} \E_{\mu}[f].
\end{equation}

Decomposing $f$ into its positive and negative parts shows that (\ref{eq:Convf}) actually holds for all $f\in C_c^\infty(\C)$.

On the other hand, by continuity, $\|\widehat{f}_\delta- \widehat{f}'_\delta \|_{C^0} = O(\varepsilon)$. Therefore, using Theorem \ref{th:FewRes2} and our assumption, we have
\begin{equation}\label{eq:limsup1}
\limsup\limits_{N\to +\infty} \E_{\mu_N}\left[ \left|\widehat{f}_\delta- \widehat{f}'_\delta\right|\right] =O(\delta).
\end{equation}

Finally, for any $\delta>0$, the function $f- \widehat{f}_\delta$ is supported away from the real axis, so, by Theorem \ref{th:FewRes2}, we have
\begin{equation}\label{eq:limsup2}
\limsup\limits_{N\to +\infty} \E_{\mu_N}\left[ \left| f- \widehat{f}_\delta\right|\right] =0.
\end{equation}

Combining equations (\ref{eq:Convf}), (\ref{eq:limsup1}) and (\ref{eq:limsup2}) gives us the result.
\end{proof}

\section{Open quantum graphs and their resonances}\label{sec:QG}
We will now recall the definition of resonances on open quantum graphs, and explain how they enter the framework developed in the previous section.

\subsection{\textcolor{black}{Definition of resonances on open quantum graphs}}
An (open) quantum graph is given by a finite graph, where each edge is given a length and where infinite edges (called \emph{leads}) are attached to some of the vertices. More precisely
\begin{tcolorbox}
\begin{definition}
A \emph{quantum graph} $\mathcal{Q}=(V,E,L, \mathbf{n})$ is the data of
\begin{itemize}
\item A graph $G=(V,E)$  with vertex set $V$ and edge set $E$.
\item A map $L: E\rightarrow (0,\infty)$.
\item A map $\mathbf{n}: V \longrightarrow \N\cup \{0\}$.
\end{itemize}

The quantum graph will be called \emph{finite} if $G$ is a finite graph. We will say that $\mathcal{Q}$ is a \emph{closed} quantum graph if $\boldsymbol{n}(v) = 0$ for all $v\in V$.
\end{definition}
\end{tcolorbox}

All the quantum graphs we will consider will always be either finite or closed.

The graph $(V,E)$ should be thought of as the compact part of our graph, the map $L$ gives the length of the edges in the compact part, while the map $\mathbf{n}$ gives the number of infinite edges attached to each vertex. If $v\in V$, we denote by $\textcolor{black}{d(v)}$ the (internal) degree of $v$, i.e., the number of $e\in E$ to which $v$ belongs. We define the total length of the graph by
$$\mathcal{L}(\mathcal{Q}):= \sum_{e\in E} L(e).$$

We let $B= B(G)$ be the set of oriented internal edges (or bonds) associated to $E$. If $b\in B$, we shall denote by $\hat{b}$ the reverse bond. We write $o(b)$ for the origin of $b$ and $t(b)$ for the terminus of $b$. 
%We will also write $e(b)\in E$ for the edge obtained by forgetting the orientation of $b$, and write $L_b:= L(e(b))$, so that $L_b = L_{\hat{b}}$.

We define the external bonds by $B_{ext} := \bigsqcup_{v\in V} \bigsqcup _{k=1}^{\mathbf{n}(v)}\textcolor{black}{ \{}(v, k) \textcolor{black}{\}}$, and we write $o_b= v$ if $b=(v,k)\in B_{ext}$, and set $\hat{B}= B\cup B_{ext}$.

A $C^2$ function on the graph will be a collection of maps $f=(f_b)_{b\in \hat{B}}$, with $f_b\in C^2([0,L_b])$ if $b\in B$, $f_b\in C^2([0,\infty))$ if $b\in B_{ext}$, and such that
\begin{equation} \label{eq:CondSym}
\forall b\in B, f_b (\cdot) = f_{\hat{b}}(L_b - \cdot).
\end{equation}
 We will write $C^2(\mathcal{Q})$ for the set of such functions. We may now define the boundary conditions we put at the vertices.

\begin{tcolorbox}
\begin{definition}\label{def:Kirch}
We say that $f\in C^2(\mathcal{Q})$ satisfies \emph{Kirchhoff boundary conditions} if it satisfies
\begin{itemize}
\item \textbf{Continuity}: For all $b,b'\in \hat{B}$, we have $f_b(0) = f_{b'}(0) =:f(v)$ if $o_b=o_{b'}=v$.\\
\item \textbf{Current conservation:} For all $v\in V$, 
\begin{equation*}
\sum_{b\in \hat{B}:o(b)=v} f_b'(0)= \textcolor{black}{0}\,.
\end{equation*}
\end{itemize}
\end{definition}
\end{tcolorbox}

Let us now move to the definition of scattering resonances of quantum graphs.

\begin{tcolorbox}
\begin{definition}\label{def:Res}
Let $\cQ$ be a finite quantum graph.
A number $z\in \C$ is called a \emph{scattering resonance} of $\cQ$ if there exists $f\in C^2(\cQ)$ such that
\begin{enumerate}
\item $f$ satisfies the Kirchhoff boundary conditions.
\item For all $b\in \hat{B}$, we have $-f_b''= z^2 f_b$.
\item For all $b\in B_{ext}$, we have $f_b(x) = f_b(0) e^{i zx}$.
\end{enumerate}

If $z$ is a resonance, we define its multiplicity, written $\mathrm{mult}(z)$ as the number of linearly independent $f\in C^2(\cQ)$ satisfying 1., 2. and 3.

We will write
$$\mathrm{Res}(\mathcal{Q}):= \{ \text{Resonances of } \mathcal{Q}\} \subset \C.$$

Finally, if $\Omega\subset \C$, we write
$$\mathcal{N}_{\Omega}(\cQ) := \sum_{z \in \Omega \cap \mathrm{Res}(\mathcal{Q})} \mathrm{mult}(z).$$
\end{definition}
\end{tcolorbox}

\begin{rem}
If the quantum graph $\mathcal{Q}$ is closed, then condition 3. above is empty. A scattering resonance is then just the square-root of an eigenvalue of the Laplacian on $\cQ$ with suitable selfadjoint conditions. We refer the reader to \cite[Chapter 1 and 2]{BK} for more details on this situation. In particular, all the scattering resonances of a closed quantum graph belong to the real axis.
\end{rem}

\begin{rem}\label{rem:Origin}
When $z=0$, condition 3. means that $f_b$ is constant on every lead. In particular, the multiplicity of $z=0$ does not change when we add or remove leads, i.e., if we change the value of $\mathbf{n}$.
\end{rem}

Note that, if we did not impose the last condition (and if $\mathbf{n}$ is not identically zero), then any number $z\in \C$ would be a scattering resonance. On the other hand condition 3. imposes that scattering resonances must have negative imaginary part. Otherwise, the function $f$ would be an eigenfunction of a selfadjoint operator, associated to a non-real eigenvalue. We will see another proof of this fact in the next subsection.

\subsection{A characterisation of scattering resonances of an open quantum graph}
If a function $f\in C^2(\mathcal{Q})$ satisfies 1. and 3. in Definition \ref{def:Res}, then for every $b\in B_{ext}$, $f_b$ is entirely determined by $f_b(0)$, and we have $f'_b(0)= iz f_b(0)$, so that the current conservation condition rewrites
\begin{equation}\label{eq:GenKirch}
\forall v\in V, \sum_{b\in B ; o_b=v} f'_b(0) = -i \mathbf{n}(v) z  f(v).
\end{equation}
%This is a special case of a generalized Kirchhoff condition, although generalized Kirchhoff conditions generally appear in self-adjoint problems, with a real factor in front of $f(v)$ in the right-hand side.

\begin{rem}
In particular, we see that $z$ is a resonance if and only if there exists $\tilde{f}= (f_b)_{b\in B}$ where $f_b\in C^2([0,L_b])$ satisfies (\ref{eq:CondSym}) and $-f_b''=z^2f_b$, and such that the continuity condition and (\ref{eq:GenKirch}) are satisfied.

This caracterization has the advantage of only implying (functions on) the compact part of the graph, along with the map $\mathbf{n}$. 
\end{rem}

The condition $-f_b''=z^2f_b$ implies that, for each $b\in \hat{B}$, we can find coefficients $a(b), a'(b)\in \C$ such that\footnote{The third condition in Definition \ref{def:Res} can hence be thought of as imposing $a'(b)=0$ if $b\in B_{ext}$: the wave is made of purely \emph{outgoing} parts on the external bonds.} $f_b(x)= a(b) e^{izx}+ a'(b) e^{-izx}$.

Let us rewrite the generalized Kirchhoff conditions in terms of the coefficients $a(b)$ and $a'(b)$.
The continuity condition rewrites
$$\forall b,b'\in B \text{ such that } o_b=o_{b'}= v, \text{ we have } a(b)+a'(b)= a(b')+ a'(b') = f(v).$$
The condition about derivatives can be expressed as
\begin{align*}
   \boldsymbol{n}(v) f(v) &= - \sum_{b\in B ; o_b =v} \left[a(b) - a'(b)\right]\\
&= -  d(v) f(v) + 2\sum_{b\in B ; o_b =v} a'(b),
\end{align*}
which amounts to
$$\forall b\in B,~  a(b) = \sum_{b'\in B} \sigma_{b,b'} a'(b'),$$
with
$$\sigma_{b,b'} = \begin{cases} \frac{2}{\boldsymbol{n}(v)+d(v)} &\text{ if } o_b=o_{b'}=v \text{ and } b'\neq b\\
\frac{2}{\boldsymbol{n}(v)+d(v)} - 1 &\text{ if } b'=b \text{ with } o_b=v\\
0 &\text{ if } o_b\neq o_{b'}. \end{cases}$$

\begin{rem}\label{rem:NormeSigma}
Note that if we denote by $\sigma^{(v)}$ the $d(v)\times d(v)$ matrix whose lines and columns are indexed by the bonds of $B$ such that $o_b=v$, with entries $\sigma^{(v)}_{b,b'} = \sigma_{b,b'}$, then 
$$\sigma^{(v)}= - \mathrm{Id} + \frac{2}{\boldsymbol{n}(v) + d(v)}  J,$$
where $J$ has all its entries equal to one. In particular, $\sigma^{(v)}$ is symmetric and has eigenvalue $-1$ with multiplicity $d(v)-1$, and $ -1 + \frac{2 d(v)}{\boldsymbol{n}(v)+d(v)}$ with multiplicity one. In particular, $\sigma^{(v)}$ is always subunitary.

When $\boldsymbol{n}(v)=0$ (hence, when there are no leads attached to $v$), then $\sigma^{(v)}$ is an isometry. Otherwise, we see that $\sigma^{(v)}$ is invertible unless $v$ is a \emph{balanced} vertex, i.e., a vertex such that $\mathbf{n}(v)= d(v)$.
\end{rem}

Finally, recall that, condition (\ref{eq:CondSym}) implies that for all $b\in B$, we have $a'(b) = e^{iz L_b} a(\hat{b})$, so that
$$f_b(x) = a(b) e^{izx} + a(\hat{b}) e^{i z (L_b-x)}.$$

The continuity and generalized Kirchhoff conditions may then be rewritten as
\begin{equation}\label{eq:CondCoef}
a(b) = \sum_{b'\in B} \sigma_{b,\hat{b'}} e^{i z L_{b'}} a(b').
\end{equation}

Let us define the matrices $D(z)$, $S$ and $U(z)$ whose lines and columns are indexed by the elements of $B$ by
\begin{equation}\label{eq:DefU}
\begin{aligned}
D(z)_{b,b'}&= \delta_{b,b'} \ee^{iz L_b}\\
S_{b,b'} &= \sigma_{b,\hat{b'}}\\
U(z) &= S D(z).
\end{aligned}
\end{equation}

An important feature of Kirchhoff boundary conditions, essential in our proof, and which disappears for more general conditions, is that the matrix $S$ does not depend on $z$.

Writing $\vec{a} = (a(b))_{b\in B}$, equation (\ref{eq:CondCoef}) then rewrites as
\begin{equation*}
U(z) \vec{a} = \vec{a}.
\end{equation*}

We have thus shown that, for any $z\in \C\backslash \{0\}$,
\begin{equation}\label{eq:CritRes}
z \text{ is a resonance } \Longleftrightarrow \mathrm{\det} (\mathrm{Id} - U(z)) = 0,
\end{equation}
and the multiplicities coincide.

This characterisation of resonances, which can be found for instance in \cite{KotSmi}, is reminiscent of the \emph{secular equation} introduced by Kottos and Smilansky in \cite{KS97}, which is a powerful tool to study the spectrum of closed quantum graphs.

\subsection{Families of open quantum graphs}
From now on, we consider a sequence of finite open quantum graphs $\mathcal{Q}_N=(V_N,E_N,L_N,\mathbf{n}_N)$, and we will be interested in the localization of its resonances. We shall always make the following assumption on our graphs.

\begin{tcolorbox}
\begin{hyp} [\textbf{Bounds}]\label{Hyp:Bounds}
There exists $D, n_0\in \N$, $0<L_{min}\leq L_{max}$ such that, for all $N\in \N$, we have
\begin{equation}\label{eq:Bounds}
\begin{cases}
& \forall v\in V_N, \textcolor{black}{d(v)}\leq D\\
&\forall e\in E_N, L_{min} \le L(e)\le L_{max}\\
& \forall v\in V_N, \mathbf{n}_N(v)  \le n_0.
\end{cases}
\end{equation}
\end{hyp}
\end{tcolorbox}

\textcolor{black}{We will always make the following assumption on our graphs.}

\begin{tcolorbox}
\begin{hyp}[\textbf{Unbalanced}]\label{Hyp:Unbalanced}
For all $N\in \N$, we have
\begin{equation}\label{eq:Unbalanced}
\forall v\in V_N, \mathbf{n}_N(v) \neq \textcolor{black}{d(v)}.
\end{equation}
\end{hyp}
\end{tcolorbox}

\begin{tcolorbox}
\begin{lemme}\label{lem:InABand}
Suppose that $(\mathcal{Q}_N)$ satisfies  \emph{\textbf{(Bounds)}} and \emph{\textbf{(Unbalanced)}}. Then 
\begin{equation}\label{eq:Band2}
 \mathrm{Res}(\cQ_N)\subset \R + i \left[0, \frac{1}{L_{min}} \sup_{v\in V_N} \big( \ln(\mathbf{n}(v)+ d(v)) - \ln|d(v) - \mathbf{n}(v)|\big)\right].
 \end{equation}
 
In particular,
\begin{equation*}
\mathrm{Res}(\cQ_N)\subset \R + i \left[- \frac{\ln(D+n_0)}{L_{min}}, 0\right].
\end{equation*}
\end{lemme}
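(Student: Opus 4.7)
The plan is to work directly from the characterization (\ref{eq:CritRes}): a nonzero $z$ is a resonance of $\cQ_N$ if and only if $\mathrm{Id} - U(z) = \mathrm{Id} - SD(z)$ fails to be invertible. Setting $K := L_{\min}^{-1}\sup_{v\in V_N}\bigl(\ln(\mathbf{n}(v)+d(v)) - \ln|d(v)-\mathbf{n}(v)|\bigr)$, it suffices to show $\mathrm{Id}-U(z)$ is invertible for $\Im z>0$ and for $\Im z<-K$. The point $z=0$ lies on the real axis and is automatically inside the strip. The argument will reduce to two Neumann-series estimates, one for each edge of the strip.

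First I would collect norm bounds on the factors of $U(z)=SD(z)$. Since $|D(z)_{b,b}|=e^{-\Im z\, L_b}$, one has $\|D(z)\|=e^{-\Im z\, L_{\min}}$ when $\Im z>0$ and $\|D(z)^{-1}\|=e^{\Im z\, L_{\min}}$ when $\Im z<0$. By Remark \ref{rem:NormeSigma}, the block-diagonal matrix $\sigma$ is subunitary, and $S$ is obtained from $\sigma$ by permuting columns according to $b'\mapsto\hat{b'}$, hence $\|S\|\le 1$. Under (Unbalanced), each block $\sigma^{(v)}=-\mathrm{Id}+\tfrac{2}{\mathbf{n}(v)+d(v)}J$ is invertible with $\|(\sigma^{(v)})^{-1}\|=\tfrac{d(v)+\mathbf{n}(v)}{|d(v)-\mathbf{n}(v)|}$ (using its spectrum, namely $-1$ and $\tfrac{d(v)-\mathbf{n}(v)}{d(v)+\mathbf{n}(v)}$), and therefore $S$ is invertible with $\|S^{-1}\|=\max_{v\in V_N}\tfrac{d(v)+\mathbf{n}(v)}{|d(v)-\mathbf{n}(v)|}$.

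Next I would handle the two sides of the strip by Neumann series. If $\Im z>0$ then $\|U(z)\|\le\|S\|\|D(z)\|\le e^{-\Im z L_{\min}}<1$, so $\mathrm{Id}-U(z)$ is invertible, ruling out resonances with positive imaginary part. If $\Im z<-K$ then $\|D(z)^{-1}S^{-1}\|\le e^{\Im z L_{\min}}\|S^{-1}\|<1$, so $\mathrm{Id}-D(z)^{-1}S^{-1}$ is invertible; the factorization $\mathrm{Id}-U(z)=-U(z)\bigl(\mathrm{Id}-D(z)^{-1}S^{-1}\bigr)$, combined with the invertibility of $U(z)=SD(z)$, yields invertibility of $\mathrm{Id}-U(z)$. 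This establishes the first inclusion. The ``in particular'' bound follows from $|d(v)-\mathbf{n}(v)|\ge 1$ (integrality and (Unbalanced)), so $\ln|d(v)-\mathbf{n}(v)|\ge 0$, together with $d(v)+\mathbf{n}(v)\le D+n_0$ from (Bounds).

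The only mildly subtle point is identifying $\|S^{-1}\|$: one must note that $S$ is not quite block-diagonal, but differs from the block-diagonal $\sigma$ by a unitary permutation, so computing $\|S^{-1}\|$ reduces to the explicit spectrum of each $\sigma^{(v)}$ given in Remark \ref{rem:NormeSigma}. Everything else is a routine Neumann-series argument.
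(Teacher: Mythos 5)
Your proof is correct and follows essentially the same route as the paper: bound $\|S\|\le 1$ and compute $\|S^{-1}\|$ via the spectrum of the blocks $\sigma^{(v)}$ (using that $S$ differs from the block-diagonal $\sigma$ by the unitary permutation $b\mapsto\hat b$), then exclude resonances with $\Im z>0$ and with $\Im z<-K$ by Neumann series applied to $U(z)$ and $U(z)^{-1}$ respectively. If anything, you are slightly more explicit than the paper, which omits the $\Im z>0$ half and the factorization $\mathrm{Id}-U(z)=-U(z)\bigl(\mathrm{Id}-U(z)^{-1}\bigr)$.
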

\end{tcolorbox}

\begin{proof}

Under hypothesis \textbf{(Unbalanced)}, we see that the matrices $\sigma^{(v)}$ are all invertible, and we have $\|(\sigma^{(v)})^{-1}\| = \frac{\mathbf{n}(v)+ d(v)}{|d(v) - \mathbf{n}(v)|} \leq \mathbf{n}(v)+ d(v).$
In particular, if we furthemore suppose that (\textbf{Bound}) is satisfied, we get
$$\|\sigma^{(v)}\|^{-1}\leq n_0 + D.$$

We see from Remark \ref{rem:NormeSigma} that $\|S\|\leq 1$.
Furthermore, if $J$ is the $B\times B$ matrix such that $J_{b,b'} = \delta_{b', \widehat{b}}$, then 
the matrix $SJ$ is a block matrix, so it can be inverted block by block. We deduce from this and from Remark \ref{rem:NormeSigma} that $\|S^{-1}\|\leq n_0 + D$. In particular, we see that $U(z)$ is invertible, and that for any $z\in \C^-$, we have
$$\|U(z)^{-1}\| \leq (n_0 + D) e^{\Im z L_{min}}.$$

Recalling that $\mathrm{\det} (\mathrm{Id} - U(z)) = 0$ if and only if $\mathrm{\det} (\mathrm{Id} - U^{-1}(z)) = 0$, we deduce (\ref{eq:Band2}).
\end{proof}

\subsubsection{Benjamini-Schramm convergence for open quantum graphs}\label{sec:BS}
We do now introduce the definition of Benjamini-Schramm convergence of open quantum graphs, following closely \cite[\S 3]{BSQG} where similar definitions are introduced for closed quantum graphs.

A rooted open quantum graph $(\mathcal{Q},b_0)=(V,E,L,\mathbf{n},b_0)$ will be the data of an open quantum graph $\mathcal{Q}=(V,E,L,\mathbf{n})$, and of a bond $b_0\in B(G)$.
\begin{tcolorbox}
\begin{definition}
We say that two rooted open quantum graph $(\mathcal{Q}_0,b_0)= (V_0,E_0,L_0,\mathbf{n}_0,b_0)$ and $(\mathcal{Q}_1,b_1)=(V_1,E_1,L_1,\mathbf{n}_1,b_1)$ are equivalent, which we denote by $(\mathcal{Q}_0,b_0)\sim (\mathcal{Q}_1,b_1)$, if there exists a graph isomorphism $\phi : (V_0,E_0)\longrightarrow (V_1,E_1)$ such that $\phi(o(b_0))= o(b_1)$, $\phi(t(b_0))= t(b_1)$, $L_1\circ \phi = L_0$, and $\mathbf{n}_1 \circ \phi = \mathbf{n}_0$.

The set of rooted open quantum graphs, quotiented by $\sim$, will be denoted by $\mathrm{ROQ}$. If $(\cQ, b_0)$ is a rooted open quantum graph, we will denote by $[\cQ,b_0]$ its equivalence class.
\end{definition}
\end{tcolorbox}

If $v\in G$ is a vertex in a graph and $r\in \N$, we write $\mathrm{B}_G(v,r)$ for the set of vertices which are at a (combinatorial) distance at most $r$ from $v$. We write $E(\mathrm{B}_G(v,r))$ for the set of edges in $E$ connecting two vertices of $\mathrm{B}_G(v,r)$.

We introduce a distance between rooted quantum graphs as follows
\begin{align*}
\mathrm{d}\left([\cQ_1,b_1], [\cQ_2,b_2]\right) := \inf \Big{\{} &\varepsilon>0 ~ \big{|} ~  \exists \phi : \mathrm{B}_{G_1}(o_{b_1}, \lfloor\varepsilon^{-1}\rfloor) \to  \mathrm{B}_{G_2}(o_{b_2}, \lfloor\varepsilon^{-1}\rfloor) \text{ a graph isomorphism }\\
& \text{ such that } \mathbf{n}_2 \circ \phi = \mathbf{n}_1 \text{ and } \sup\limits_{e \in E(\mathrm{B}_{G_1}(o_{b_1}, \lfloor\varepsilon^{-1}\rfloor))} |L_2(\phi(b)) - L_1(b)|<\varepsilon \Big{\}}.
\end{align*}

Note that this definition is independent of the representatives we chose in the equivalence classes $[\cQ_1,b_1], [\cQ_2,b_2]$, so it is well-defined on $\mathrm{ROQ}$. Furthermore, one can show that $(\mathrm{ROQ}, d)$ is a Polish space, i.e., a separable complete metric space.

Let $\cP(\mathrm{ROQ})$ be the set of Borel probability measures on $\mathrm{ROQ}$.

\begin{tcolorbox}
\begin{definition}
Any finite quantum graph $\mathcal{Q}=(V,E,L,\mathbf{n})$ defines a probability measure $\nu_{\cQ}\in \cP(\mathrm{ROQ})$ obtained by choosing a root uniformly at random: 
\[
\nu_{\cQ}:= \frac{1}{|B|} \sum_{b_0\in B}  \delta_{[(\cQ,b_0)]}.
\]

If $\textcolor{black}{(\cQ_N)}$ is a sequence of quantum graphs, we say that $\mathbb{P}\in \cP(\mathrm{ROQ})$ is the \emph{local weak limit} of $\textcolor{black}{(\cQ_N)}$, or that $\textcolor{black}{(\cQ_N)}$ \emph{converges in the sense of Benjamini-Schramm to $\mathbb{P}$}, if $\textcolor{black}{(\nu_{\cQ_N})}$ converges weakly to $\mathbb{P}$.
\end{definition}
\end{tcolorbox}

Note that, in the special case of equilateral graphs with no leads, that is, if, for all $N$, we have $\textbf{n}_N \equiv 0$ and $L_N \equiv L_0$, then $\textcolor{black}{(\cQ_N)}$ converges in the sense of Benjamini-Schramm if and only if the underlying discrete graphs $(V_N,E_N)$ converge in the sense of Benjamini-Schramm (recall that Benjamini-Schramm convergence for discrete graphs was discribed in section \ref{sec1.3}). We refer the reader to \cite[\S8]{BSQG} for examples of sequences of closed quantum graphs that converge in the sense of Benjamini-Schramm.

Let $D, n_0 \in \N$, $0<m\leq M$. We define $\mathrm{ROQ}^{D, n_0, L_{min}, L_{max}}\subset \mathrm{ROQ}$ as the subset of equivalence classes $[\cQ,b_0]=[(V,E,L,\mathbf{n}, b_0)]$ 
such that $\cQ$ satisfies (\ref{eq:Bounds}).

\begin{tcolorbox}
\begin{lemme}\label{lem:QCompact}
The subset $\mathrm{ROQ}^{D, n_0,  L_{min}, L_{max}}$ is compact.

 In particular, using Prokhorov's theorem, we see that if $(\mathcal{Q}_N)$ is a sequence of finite open quantum graphs which satisfy \textbf{(Bounds)}, then  there is a subsequence $(\cQ_{N_k})$ which converges in the sense of Benjamini-Schramm (i.e. there exists $\mathbb{P}\in \mathcal{P}(\mathrm{ROQ})$ supported on $\mathrm{ROQ}^{D, n_0, L_{min}, L_{max}}$ such that $\nu_{{Q}_{N_k}}\xrightarrow{w^*} \mathbb{P}$).
\end{lemme}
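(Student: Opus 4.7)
Since $(\mathrm{ROQ}, d)$ is a Polish space, it suffices to show sequential compactness of the subset $\mathrm{ROQ}^{D, n_0, L_{min}, L_{max}}$. Let $([\cQ_N, b_N])_{N\in \N}$ be an arbitrary sequence in this subset. The plan is to extract, via a diagonal argument, a subsequence whose combinatorial balls around the root stabilize and whose edge lengths converge, and then to construct a limit rooted quantum graph from this data.

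The first step is a finiteness observation: for each fixed $r\in \N$, the ball $\mathrm{B}_{G_N}(o_{b_N}, r)$ has at most $1 + D + D(D-1) + \cdots + D(D-1)^{r-1}$ vertices, because the internal degrees are uniformly bounded by $D$. Since moreover the map $\mathbf{n}_N$ restricted to this ball takes values in the finite set $\{0, 1, \ldots, n_0\}$, there are only finitely many isomorphism classes of ``decorated combinatorial balls of radius $r$ rooted at a bond'' that can appear. Therefore, up to extracting a subsequence, we may assume the combinatorial decorated ball of radius $r$ around $o_{b_N}$ is, as an abstract rooted graph equipped with the map $\mathbf{n}$, the same $(H_r, \mathbf{n}^{(r)}, b_\star^{(r)})$ for all $N$. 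Furthermore, we can and do arrange that $(H_r, \mathbf{n}^{(r)}, b_\star^{(r)})$ is the ball of radius $r$ inside $(H_{r+1}, \mathbf{n}^{(r+1)}, b_\star^{(r+1)})$.

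The second step is to deal with the lengths. For each $r$, once the combinatorial ball has been fixed, the family of lengths $(L_N(e))_{e\in E(H_r)}$ lives in the compact product $[L_{min}, L_{max}]^{E(H_r)}$. A further diagonal extraction yields a subsequence along which $L_N(e)$ converges, for every edge $e$ that ever appears in any $H_r$, to some limit $L_\infty(e)\in [L_{min}, L_{max}]$. Assembling everything, we obtain a rooted quantum graph $(\cQ_\infty, b_\infty) = (V_\infty, E_\infty, L_\infty, \mathbf{n}_\infty, b_\infty)$ whose underlying graph is $\bigcup_r H_r$, whose length and lead maps are the limit values, and which still satisfies the bounds in \textbf{(Bounds)}.

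Finally, the third step is to verify that $[\cQ_N, b_N]$ converges to $[\cQ_\infty, b_\infty]$ in the metric $d$. Given $\varepsilon>0$, set $r = \lfloor \varepsilon^{-1}\rfloor$. By construction, for $N$ large enough, the balls $\mathrm{B}_{G_N}(o_{b_N}, r)$ and $\mathrm{B}_{G_\infty}(o_{b_\infty}, r)$ are isomorphic via an isomorphism $\phi$ that respects roots and the maps $\mathbf{n}_N, \mathbf{n}_\infty$, and the finitely many edge lengths in $E(\mathrm{B}_{G_\infty}(o_{b_\infty}, r))$ differ by less than $\varepsilon$. This gives $d([\cQ_N, b_N], [\cQ_\infty, b_\infty]) \leq \varepsilon$ eventually, whence convergence. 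The main obstacle is not conceptual but bookkeeping: one must ensure the diagonal extraction produces nested combinatorial balls so that the $H_r$ form a consistent system whose direct limit is a genuine quantum graph; this is handled by the compatibility requirement in the first step above.
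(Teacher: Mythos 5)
Your argument is correct: the paper itself gives no proof of this lemma (it is stated without proof, following the analogous compactness statement for closed quantum graphs in \cite[\S 3]{BSQG}), and the diagonal extraction you describe --- finitely many decorated combinatorial balls of each radius thanks to \textbf{(Bounds)}, compactness of $[L_{min},L_{max}]^{E(H_r)}$ for the lengths, then assembly of the nested balls into a limit rooted quantum graph and verification of convergence in the metric $d$ --- is exactly the standard proof of sequential (hence, in a metric space, topological) compactness for such local-weak-limit spaces. The only care points are the ones you already flag: choosing the isomorphisms identifying $\mathrm{B}_{G_N}(o_{b_N},r)$ with $H_r$ compatibly in $r$ before passing to the length coordinates, and checking that the limit object still satisfies \textbf{(Bounds)} so that it lies in $\mathrm{ROQ}^{D,n_0,L_{min},L_{max}}$; both are routine.
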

\end{tcolorbox}

Let us now present the main result of \cite{BSQG}, showing that Benjamini-Schramm convergence of closed quantum graphs implies the convergence of their empirical spectral measures.

If $\mathcal{Q}$ is a finite quantum graph (open or closed), we define its empirical spectral measure as
$$\mu_\mathcal{Q}:= \sum_{z\in \mathrm{Res}(\cQ)} \delta_{z}.$$

\begin{tcolorbox}
\begin{theoreme}\label{th:Rappel}
Let $(\cQ_N)$ be a sequence of closed quantum graphs satisfying Hypothesis \textbf{(Bounds)}. Then $\textcolor{black}{\left(\frac{1}{\mathcal{L}(\cQ_N)}\mu_{\cQ_N})\right)}$ converges vaguely to some measure  on $\R$, defined as follows
\begin{equation}\label{eq:BSQG2}
\frac{1}{\mathcal{L}(\cQ_N)} \sum_{x\in \mathrm{Res}(\mathcal{Q}_N)} \chi(x) \underset{N\to +\infty}{\longrightarrow} \int_\R \chi(\sqrt{x}) \mathrm{d}\mu_{\mathbb{P}}(x) =: \int_{\mathbf{Q}_*} \chi\big{(}\sqrt{H_\cQ}\big{)}(\mathbf{x_0},\mathbf{x_0}) \dd \mathbb{P}(\cQ,\mathbf{x_0}),
\end{equation}
where $H_\cQ$ is the Schr\"odinger operator on the limiting random quantum graph $\cQ$, $\chi\big{(}\sqrt{H_\cQ}\big{)}(\mathbf{x_0},\mathbf{x_0})$ is the value of the Schwartz kernel of the operator $\chi\big{(}\sqrt{H_\cQ}\big{)}$ at the root $\mathbf{x_0}$ and $\mathbf{Q}_*$ is a set of (equivalence classes of) rooted
quantum graphs.
\end{theoreme}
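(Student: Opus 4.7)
The plan is to rewrite the sum on the left as the normalized trace of a function of the Kirchhoff Laplacian $H_{\cQ_N}$, to localize that trace via finite propagation speed of the wave kernel, and then to invoke Benjamini--Schramm convergence to identify the limit. Since $\cQ_N$ is closed, its Kirchhoff Laplacian $H_{\cQ_N}$ is self-adjoint non-negative, and (modulo the convention for $\pm\sqrt\lambda$) one can choose $\Phi$ so that
\[
\sum_{z\in\mathrm{Res}(\cQ_N)}\chi(z) \;=\; \mathrm{Tr}\,\Phi(H_{\cQ_N}) \;=\; \int_{\cQ_N}\Phi(H_{\cQ_N})(x,x)\,\dd x.
\]
Dividing by $\mathcal{L}(\cQ_N)$ exhibits the left-hand side of (\ref{eq:BSQG2}) as the integral of the diagonal of a Schwartz kernel against the arclength probability measure on $\cQ_N$.

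Next, by a density argument I would first assume that $\chi$ has Fourier transform supported in $[-T,T]$. For such $\chi$, the representation $\chi(\sqrt H)=\tfrac{1}{\pi}\int\widehat\chi(t)\cos(t\sqrt H)\,\dd t$ combined with unit propagation speed of $\cos(t\sqrt H)$ on quantum graphs with Kirchhoff vertices forces $\Phi(H_{\cQ_N})(x,x)$ to depend only on the structure of the ball $B(x,T)\subset\cQ_N$ around $x$. Under Hypothesis \textbf{(Bounds)}, this diagonal value is uniformly bounded and is a continuous functional of the point-rooted quantum graph $(\cQ_N,x)$ for the metric of Section~\ref{sec:BS}.

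To pass to the limit, I would enrich the bond-rooted measures $\nu_{\cQ_N}$ into point-rooted measures $\widetilde\nu_{\cQ_N}$ by sampling the root uniformly by arclength on $\cQ_N$. Under \textbf{(Bounds)} the edge lengths are bounded above and below, so $\widetilde\nu_{\cQ_N}$ is obtained from a length-weighted refinement of $\nu_{\cQ_N}$ (pick a bond weighted by its length, then a uniform position on it), and weak convergence $\nu_{\cQ_N}\to\mathbb{P}$ lifts to weak convergence of $\widetilde\nu_{\cQ_N}$ to a natural point-rooted companion $\widetilde{\mathbb{P}}$. Combining these steps yields
\[
\frac{1}{\mathcal{L}(\cQ_N)}\sum_{z\in\mathrm{Res}(\cQ_N)}\chi(z)\;=\;\int \Phi(H_{\cQ_N})(x,x)\,\dd\widetilde\nu_{\cQ_N}(\cQ_N,x)\;\longrightarrow\;\int \chi(\sqrt{H_\cQ})(x_0,x_0)\,\dd\widetilde{\mathbb{P}}(\cQ,x_0),
\]
which is (\ref{eq:BSQG2}). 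The restriction on $\chi$ is then removed by approximating a general element of $C_c(\R)$ uniformly by band-limited functions, using the standard $O(\mathcal{L}(\cQ_N))$ Weyl bound on the number of Kirchhoff Laplacian eigenvalues in any bounded window.

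The main obstacle, I expect, lies in the finite-propagation step: one must rigorously justify that $\cos(t\sqrt{H_{\cQ_N}})$ has unit propagation speed across Kirchhoff vertices, and verify joint continuity of $(\cQ,x_0)\mapsto\chi(\sqrt{H_\cQ})(x_0,x_0)$ on $\mathrm{ROQ}^{D,n_0,L_{min},L_{max}}$ with the metric defined in Section~\ref{sec:BS}. This reduces to a multiple-reflection analysis at each vertex of the $T$-ball around $x_0$, combined with a quantitative stability estimate showing that small perturbations of the edge lengths inside that ball perturb the diagonal kernel value continuously; only then can Lemma \ref{lem:QCompact} and weak convergence of $\widetilde\nu_{\cQ_N}$ be invoked to close the argument.
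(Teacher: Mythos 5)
First, a point of order: the paper does not prove Theorem \ref{th:Rappel}. It is quoted as the main result of \cite{BSQG}, and the only justification in the text is the pointer to that reference (including for the existence of the continuous Schwartz kernel on the right-hand side of (\ref{eq:BSQG2})). So there is no internal proof to compare yours against; what follows assesses your sketch on its own terms and against the strategy of \cite{BSQG}.

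Your outline --- rewrite the left-hand side as the normalized trace of $\chi(\sqrt{H_{\cQ_N}})$, localize the diagonal of the kernel via finite propagation speed for band-limited $\chi$, lift the bond-rooted empirical measure to an arclength-rooted one, and conclude by continuity of $(\cQ,x_0)\mapsto \chi\big(\sqrt{H_\cQ}\big)(x_0,x_0)$ in the local topology --- is the right shape of argument and is close in spirit to \cite{BSQG}, which however carries out the localization through resolvents (path expansions and off-diagonal decay of Green's functions for $\Im\gamma>0$) rather than through the wave group, and then upgrades from Stieltjes transforms to general $\chi$ by a density argument. The wave-kernel route buys exact (rather than approximate) locality for band-limited test functions, at the price of having to justify unit propagation speed through Kirchhoff vertices. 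That said, as written this is a plan rather than a proof: the two steps you yourself flag as ``the main obstacle'' --- propagation speed across vertices, and joint continuity with uniform bounds of the diagonal kernel on $\mathrm{ROQ}^{D, n_0, L_{min}, L_{max}}$ --- are precisely where all the analytic content lives, and neither is supplied. Two further points need care. (i) The $\pm\sqrt{\lambda}$ bookkeeping is not cosmetic: $\mathrm{Res}(\cQ_N)$ is symmetric about $0$ for a closed graph, so $\sum_{x\in\mathrm{Res}(\cQ_N)}\chi(x)$ is the trace of $\tilde\chi(\sqrt{H_{\cQ_N}})$ with $\tilde\chi(s)=\chi(s)+\chi(-s)$ (up to the multiplicity at $0$), and this must be matched against the normalization defining $\mu_{\mathbb{P}}$. (ii) One cannot approximate $\chi\in C_c(\R)$ uniformly on all of $\R$ by band-limited functions while keeping them summable against the (unbounded) spectrum; the correct density step uses band-limited approximants with quantitative decay (e.g.\ Fej\'er-type regularizations) together with the Weyl-type bound of $O(\mathcal{L}(\cQ_N))$ eigenvalue square roots per unit window, uniformly in the window, to control the tails uniformly in $N$. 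With these points filled in, your strategy does yield the theorem.
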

\end{tcolorbox}

We refer the reader to \cite{BSQG} for the fact that the operator $\chi\big{(}\sqrt{H_\cQ}\big{)}$ does indeed admit a continuous Schwartz kernel, and for other equivalent definitions of the measure $\mu_{\mathbb{P}}$.

\subsection{Resonances of sequences of open quantum graphs}

Just as in the previous section, if $a<b$, $c<d$, we will denote by  $\mathcal{N}_{a,b,c,d}(\mathcal{Q}_N)$ the number of resonances of $\mathcal{Q}_N$ with $ a\leq \Re z \leq b$ and  $c\leq \Im z \leq d$, and, more generally, by $\mathcal{N}_\Omega(\mathcal{Q}_N)$ the number of resonances of $\mathcal{Q}_N$ in $\Omega$.

Our main assumption is that the number of leads of $\mathcal{Q}_N$ is small compared to the number of vertices.

\begin{tcolorbox}
\begin{hyp}[\textbf{Weakly open}]
\begin{equation}
g(N):= \sharp \{ v\in V_N | \mathbf{n}_N(v) >0\} = o_{N\to +\infty} (|V_N|).
 \end{equation}
\end{hyp}
\end{tcolorbox}

We may then state the analogue of Theorem \ref{th:FewRes2}.

\begin{tcolorbox}[breakable, enhanced]
\begin{theoreme}\label{th:FewRes}
Let $(\mathcal{Q}_N)$ be a sequence of quantum graphs satisfying Hypotheses \emph{\textbf{(Bounds)}}, \textcolor{black}{\textbf{\emph{(Unbalanced)}}}  and \emph{\textbf{(Weakly open)}}. There exists a $C_0$ depending on the constants appearing in \emph{\textbf{(Bounds)}} such that the following holds.

Let $(\delta_N)$ be a bounded sequence of positive numbers, and let $0<a_1<a_2$, $0<a_3$. Then there exists $N_0$ such that for all $N\geq N_0$,
$$\mathcal{N}_{a_1,a_2, - a_3, - \delta_N} (\mathcal{Q}_N)\leq \frac{C g(N)}{\delta_N^{5/2}}.$$

In particular, if $\Omega\subset \C$ is a bounded set such that $\overline{\Omega}\cap \R = \emptyset$, we have
$$ \frac{1}{|V_N|}\mathcal{N}_{\Omega}(\cQ_N)\underset{N\to \infty}{\longrightarrow} 0.$$
\end{theoreme}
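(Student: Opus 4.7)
The plan is to reduce Theorem \ref{th:FewRes} to the abstract Theorem \ref{th:FewRes2} by encoding the open and closed versions of each quantum graph as holomorphic families of finite matrices. Concretely, take $\mathcal{H}_N := \C^{B(G_N)}$, of dimension $|B(G_N)| \leq 2D|V_N|$, and let $U_N(z) := S_N D_N(z)$ be the scattering matrix associated to $\cQ_N$ via (\ref{eq:DefU}); by the characterisation (\ref{eq:CritRes}), the nonzero resonances of $\cQ_N$ coincide in multiplicity with the resonances of $U_N$. For the comparison family, let $\widetilde{\cQ}_N=(V_N,E_N,L_N,\mathbf{0})$ be the closed quantum graph obtained by erasing every lead (keeping the same compact part), and let $\widetilde{U}_N(z) := \widetilde{S}_N D_N(z)$ be its scattering matrix. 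By Remark \ref{rem:NormeSigma}, each vertex block $\widetilde{\sigma}^{(v)}$ is an orthogonal reflection, so $\widetilde{S}_N$ is unitary and $\widetilde{U}_N(x)$ is unitary for every $x\in\R$; in particular its resonances all lie on $\R$.

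The next step is to verify Hypothesis \ref{Hyp:GenRes} for $U_N$. Point 4 follows immediately from $\|S_N\|\leq 1$ (Remark \ref{rem:NormeSigma}) and $\|D_N(z)\|\leq e^{|\Im z|L_{max}}$. For point 1 and the resolvent bound (\ref{eq:HypResolv0}), in the open upper half plane one has $\|U_N(z)\|\leq e^{-\Im z\, L_{min}}<1$, so a Neumann series gives $\|(\mathrm{Id}-U_N(z))^{-1}\|\leq (1-e^{-\Im z\, L_{min}})^{-1}=O(1/\Im z)$ uniformly in $N$. For point 3, Hypothesis \textbf{(Unbalanced)} together with the block structure analyzed in the proof of Lemma \ref{lem:InABand} gives $\|S_N^{-1}\|\leq n_0+D$, hence $\|U_N(z)^{-1}\|\leq (n_0+D)e^{\Im z\, L_{min}}$; for $\Im z\leq -Y_1$ with $Y_1>\log(2(n_0+D))/L_{min}$, the identity $(\mathrm{Id}-U_N(z))^{-1}=-(\mathrm{Id}-U_N(z)^{-1})^{-1}U_N(z)^{-1}$ then yields a uniform bound.

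I would then verify Hypothesis \ref{Hyp:TildeU}. Since $\widetilde{S}_N$ is unitary, for $\Im z>0$ I would estimate $\|\widetilde{U}_N(z)v\|\leq \|D_N(z)v\|\leq e^{-\Im z\, L_{min}}\|v\|$ and use $1-e^{-\Im z\, L_{min}}\geq c\,\Im z$ on compacts to get the upper-half-plane bound in (\ref{eq:BorneSurLesNormes}); the lower-half-plane bound on $\|\widetilde{U}_N(z)^{-1}\|$ follows from the analogous estimate on $\widetilde{U}_N(z)^{-1}=D_N(z)^{-1}\widetilde{S}_N^{-1}$. The key input for the whole strategy is the trace-norm bound (\ref{eq:DefDiffTrace}). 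Observe that $U_N(z)-\widetilde{U}_N(z)=(S_N-\widetilde{S}_N)D_N(z)$, and that $S_N-\widetilde{S}_N$ is block-diagonal with a nonzero block only at vertices $v$ with $\mathbf{n}_N(v)>0$, each such block being a $d(v)\times d(v)$ matrix of bounded operator norm. Hence the rank of $S_N-\widetilde{S}_N$ is at most $D\cdot g(N)$, its operator norm is $O(1)$, and the trace norm of the difference is $O(g(N))$ uniformly on compact subsets of $\C^{Y_0}$. This identifies $\gamma_N=g(N)$.

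Once all the hypotheses are established, Theorem \ref{th:FewRes2} applied to $(U_N,\widetilde{U}_N)$ directly yields $\mathcal{N}_{a_1,a_2,-a_3,-\delta_N}(\cQ_N)\leq C g(N)/\delta_N^{5/2}$ for $N\geq N_0$. The second assertion, $|V_N|^{-1}\mathcal{N}_{\Omega}(\cQ_N)\to 0$ for $\overline{\Omega}\cap\R=\emptyset$, follows by choosing $\delta_N:=\operatorname{dist}(\overline{\Omega},\R)>0$ (a constant), covering $\Omega$ by finitely many rectangles, and using $g(N)=o(|V_N|)$ from \textbf{(Weakly open)}. The main technical obstacle I expect is keeping the resolvent constants in Hypothesis \ref{Hyp:GenRes} uniform in $N$: the upper half plane and deep lower half plane estimates are straightforward from the norm bounds on $S_N$ and $S_N^{-1}$, but some care is needed near $\Im z=0$ to reconcile the $1/|\Im z|$ blow-up with the uniformity in $N$, which ultimately relies on \textbf{(Unbalanced)} via the bound $\|S_N^{-1}\|\leq n_0+D$.
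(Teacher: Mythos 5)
Your proposal is correct and follows essentially the same route as the paper: reduce to Theorem \ref{th:FewRes2} via the secular matrices $U_N(z)=S_N D_N(z)$ and $\widetilde{U}_N(z)=\widetilde{S}_N D_N(z)$ of the open graph and its lead-stripped closed version, verify Hypotheses \ref{Hyp:GenRes} and \ref{Hyp:TildeU} using $\|S_N\|\leq 1$, the Neumann series in the upper half plane, \textbf{(Unbalanced)} for $\|S_N^{-1}\|\leq n_0+D$, and the block structure of $S_N-\widetilde{S}_N$ to get $\gamma_N=O(g(N))$. The only cosmetic difference is that you bound $\|S_N-\widetilde{S}_N\|_1$ via its rank rather than by counting nonzero entries, which is equivalent.
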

\end{tcolorbox}

\begin{proof}
We want to show how this enters the framework of Theorem \ref{th:FewRes2}.

Recall that the resonances of $\mathcal{Q}_N$ that are not zero are characterized, along with their multiplicity, by the equation 
$$\det (\mathrm{Id}- U_N(z)) = 0,$$
 with $U_N(z)$ as in (\ref{eq:DefU}). We must thus check that $U_N$ satisfies Hypotheses \ref{Hyp:GenRes} and \ref{Hyp:TildeU}. We may take $Y_0= +\infty$, as $U_N$ is well-defined on all of $\C$.

\textcolor{black}{When $\Im z>0$, we have $\|U_N(z)\| \leq \|S_N\| \|e^{izL_N}\| \leq e^{-\Im z L_{max}}$, and the first point of Hypothesis \ref{Hyp:GenRes} follows. In particular, if $\Im z>0$, then $\|U_N(z)\| <1$, so that $(\mathrm{Id} - U_N(z))$ can be inverted by a Neumann series, with $\left\|(\mathrm{Id} - U_N(z))^{-1}\right\| \leq \frac{1}{1- \|U_N(z)\|}\leq \frac{1}{1- e^{-\Im z L_{max}}}$. The second point of Hypothesis \ref{Hyp:GenRes} follows.}

\textcolor{black}{The third point of Hypothesis \ref{Hyp:GenRes} follows from 
Lemma \ref{lem:InABand} and its proof, which imply that all resonances lie in a band below the real axis whose size is bounded independently of $N$, and that outside of this band, the norms of the resolvents $\|U_N(z)- \mathrm{Id})^{-1}\|$ are bounded independently of $N$.}

Finally, the last point in Hypothesis \ref{Hyp:GenRes} follows from the definition of $U_N$ and Hypothesis \textbf{(Bounds)}. Indeed, we have $U_N(z) = S_N D_N(z)$, with $\|S_N\|\leq 1$, and, if $z$ belongs to some compact set $\mathcal{K}$, $\|D_N(z)\|\leq  \exp\left( L_{max} \min\limits_{z\in \mathcal{K}} \Im z\right)$.

Let us write $\widetilde{\mathcal{Q}}_N :=(V,E,L,0)$. In other words, $\widetilde{\mathcal{Q}}_N$ is the closed quantum graph obtained from removing all the leads from $\mathcal{Q}_N$. Let us denote by $\widetilde{U}_N(z) = \widetilde{S}_N D_N(z)$ the family of matrices defined as in (\ref{eq:DefU}), so that the non-zero resonances of $\widetilde{\mathcal{Q}}_N$ are characterized, along with their multiplicities, by the equation
$\det (\mathrm{Id}-\widetilde{U}_N(z))=0$.

The first point of Hypothesis \ref{Hyp:TildeU}, when $\Im z>0$, is proved in exactly the same way as the first point of Hypothesis \ref{Hyp:GenRes}. When $\Im z<0$, we note that, from Remark \ref{rem:NormeSigma}, $\widetilde{S}_N$ is an isometry, so that $\widetilde{U}_N(z)$ is invertible, and $\left\|\widetilde{U}_N(z)^{-1}\right\| \leq  e^{- L_{min}|\Im z|}$.
Hence, 
\begin{equation*}
\begin{aligned}
\left\|\left(\mathrm{Id}-\widetilde{U}_N(z)\right)^{-1}\right\| &\leq \|\widetilde{U}_N(z)^{-1}\|  \left\|\left( \mathrm{Id}-\widetilde{U}_N(z)\right)\right\|\\&\leq  e^{- L_{min}|\Im z|}  \left|1- e^{- L_{min}\Im z}\right|^{-1},
\end{aligned}
\end{equation*}
and the first point of Hypothesis \ref{Hyp:TildeU} follows.

For the second point of Hypothesis \ref{Hyp:TildeU}, note that
$$\left\|U_N(z) - \widetilde{U}_N(z)\right\|_1 \leq \|D_N(z)\| \|S_N - \widetilde{S}_N\|_1.$$

Now, $S_N - \widetilde{S}_N$ has all its entries bounded uniformly, and has a number of non-zero entries which is a $O(g(N))$. Therefore, the second point of Hypothesis \ref{Hyp:TildeU} follows, with $\gamma_N = g(N)$.

We may thus apply Theorem \ref{th:FewRes2} to deduce the result.
\end{proof}

Recall that, by Hypothesis \textbf{(Bounds)} and Lemma \ref{lem:QCompact}, we may always extract a subsequence from $(\mathcal{Q}_N)$ which converges in the sense of Benjamini-Schramm. Furthermore, by Hypothesis \textbf{(Weakly open)}, the Benjamini-Schramm limits of our graphs are always measures supported on $\mathrm{RQ}$, the set of equivalence classes of rooted quantum graphs such that $\boldsymbol{n}\equiv 0$ (in other words, the set of equivalence classes of closed quantum graphs). Therefore, the following hypothesis is always satisfied, up to extracting a subsequence; furthermore, it implies Hypothesis \textbf{(Weakly Open)}.

\begin{tcolorbox}
\begin{hyp}[\textbf{BS}]
The sequence $(\mathcal{Q}_N)$ converges in the sense of Benjamini-Schramm to a measure $\mathbb{P}\in \mathcal{P} (\mathrm{RQ})$.
\end{hyp}
\end{tcolorbox}

Our main result is then an analogue of Corollary \ref{cor}.

\begin{tcolorbox}[breakable, enhanced]
\begin{theoreme}\label{th:BS}
Let $(\mathcal{Q}_N)$ be a sequence of quantum graphs satisfying Hypotheses \emph{\textbf{(Bounds)}}, \textcolor{black}{\textbf{\emph{(Unbalanced)}}}   and \emph{\textbf{(BS)}}. Then $\mathcal{Q}_N$ satisfies (\ref{eq:BSQG2}). In particular, $\textcolor{black}{\left(\frac{1}{\mathcal{L}(\cQ_N)}\mu_{\mathcal{Q}_N}\right)}$ converges vaguely to some measure on $\R$ depending only on $\mathbb{P}$.
\end{theoreme}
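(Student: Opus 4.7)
The plan is to reduce Theorem \ref{th:BS} to the closed-graph result (Theorem \ref{th:Rappel}) via the abstract framework of Corollary \ref{cor}. I would introduce $\widetilde{\cQ}_N:=(V_N,E_N,L_N,0)$, the closed graph obtained from $\cQ_N$ by dropping all leads, and compare their resonances through the matrices $U_N(z)=S_N D_N(z)$ and $\widetilde{U}_N(z)=\widetilde{S}_N D_N(z)$; the proof of Theorem \ref{th:FewRes} already ensures that these pairs satisfy Hypotheses \ref{Hyp:GenRes} and \ref{Hyp:TildeU} with $\gamma_N=g(N)$.

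First I would verify Hypothesis \ref{HypRenforcee} in the quantum graph setting. Points 1 and 2 are immediate: $d_N=|B_N|$ is comparable to $|V_N|$ by \textbf{(Bounds)}, and $\gamma_N=g(N)=o(|V_N|)$ follows from \textbf{(BS)}. For point 3, Remark \ref{rem:NormeSigma} shows that each block $\widetilde{\sigma}^{(v)}$ of $\widetilde{S}_N$ is an isometry, so $\widetilde{S}_N$ is unitary and $\widetilde{U}_N(x)=\widetilde{S}_N D_N(x)$ is unitary for $x\in\R$. For point 4 a direct computation gives $-i(\mathrm{d}\widetilde{U}_N/\mathrm{d}x)\widetilde{U}_N(x)^{-1}=\widetilde{S}_N\,\mathrm{diag}(L_b)\,\widetilde{S}_N^{-1}$, which is self-adjoint and bounded below by $L_{min}\mathrm{Id}$. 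Point 5 follows from the bond-reversal identity $JS_NJ=S_N^T$ (where $J$ swaps each $b$ with $\hat{b}$) together with $JD_N(z)J=D_N(z)$, the reality of $S_N$, and the fact that $AB$ and $BA$ are similar when one of the factors is invertible.

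Next I would show that $\widetilde{\cQ}_N$ converges in the Benjamini-Schramm sense to the same limit $\mathbb{P}$. The measures $\nu_{\cQ_N}$ and $\nu_{\widetilde{\cQ}_N}$ coincide on bonds $b$ whose distance-$k$ neighbourhood contains no vertex with $\mathbf{n}>0$, so their difference is controlled by the number of exceptional bonds, which by \textbf{(Bounds)} is at most $D^{k+1}g(N)$. Since \textbf{(BS)} with $\mathbb{P}$ supported on closed graphs forces $g(N)=o(|V_N|)$, this is $o(|B_N|)$, and hence $\nu_{\widetilde{\cQ}_N}\to\mathbb{P}$ weakly as well.

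Finally I would combine the pieces. Applying Theorem \ref{th:Rappel} to the closed graphs $\widetilde{\cQ}_N$ yields vague convergence of $\frac{1}{\mathcal{L}(\widetilde{\cQ}_N)}\mu_{\widetilde{\cQ}_N}$ to the right-hand side of (\ref{eq:BSQG2}); and $\mathcal{L}(\widetilde{\cQ}_N)=\mathcal{L}(\cQ_N)$ because leads do not contribute to $\mathcal{L}$. Testing the bounded continuous function $[\cQ,b_0]\mapsto L_\cQ(b_0)$ against $\nu_{\cQ_N}$ shows that $\mathcal{L}(\cQ_N)/|B_N|$ tends to a positive constant, so the proof of Corollary \ref{cor} (which only uses that the normalizing sequence is proportional to $d_N$) transfers the convergence from $\widetilde{\mu}_N$ to $\mu_{\cQ_N}$ under the normalization $\mathcal{L}(\cQ_N)$. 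I expect the main obstacle to be the verification of the positivity condition (\ref{eq:Positivite}) together with the anti-holomorphic symmetry in Hypothesis \ref{HypRenforcee}, both of which require a careful bookkeeping of the block structure of the scattering matrices.
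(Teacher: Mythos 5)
Your proposal follows essentially the same route as the paper: reduce to Corollary \ref{cor} via Theorem \ref{th:Rappel} by verifying Hypothesis \ref{HypRenforcee} for $U_N = S_N D_N$ and $\widetilde{U}_N = \widetilde{S}_N D_N$, with the same computations for points 3--5 (the paper proves point 5 by observing directly that $U_N^*(-\overline{z}) = e^{izL_N}S_N^*$ is the transpose of $U_N(z)$, which is equivalent to your $J$-conjugation argument). The only detail you omit is that the characterisation $\det(\mathrm{Id}-U_N(z))=0$ only captures resonances $z\neq 0$, so one must also invoke Remark \ref{rem:Origin} to see that $z=0$ has the same multiplicity for $\cQ_N$ and $\widetilde{\cQ}_N$; otherwise your verification is complete, and indeed slightly more explicit than the paper's on the Benjamini--Schramm convergence of the de-leaded graphs and on the passage between the normalisations by $N$ and by $\mathcal{L}(\cQ_N)$.
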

\end{tcolorbox}
\begin{proof}
We want to apply Corollary \ref{cor}, with the notations of the proof of Theorem \ref{th:FewRes}. Note that the resonances of $U_N$ (resp $\widetilde{U}_N$) only correspond to the resonances of $\mathcal{Q}_N$ (resp $\widetilde{Q}_N)$ that are different from zero. This is not a problem since, by Remark \ref{rem:Origin}, zero has the same multiplicity as a resonance for $\mathcal{Q}_N$ and $\widetilde{\mathcal{Q}}_N$.

We need to check that Hypothesis \ref{HypRenforcee} holds, with the notations of the proof of Theorem \ref{th:FewRes}.

The first point always holds if we label our sequence of graphs such that $|V_N|=N$, and the second point then follows from Hypothesis \textbf{(Weakly open)}.

The third point follows from the fact that $\widetilde{S}_N$ is an isometry. Concerning the \textcolor{black}{fourth} point, note that 
\begin{align*}
 \frac{\mathrm{d}\widetilde{U}_N(x)}{\mathrm{d}x} \widetilde{U}_N^{-1}(x) &= \widetilde{S}_N iL_{\textcolor{black}{N}} e^{iL_Nx} e^{-iL_Nx} \widetilde{S}_N^{-1} = i\widetilde{S}_N L_N \widetilde{S}_N^*,
 \end{align*}
so that
$-i  \frac{\mathrm{d}\widetilde{U}_N(x)}{\mathrm{d}x} \widetilde{U}_N^{-1}(x) - L_{min} \mathrm{Id}$ is a positive selfadjoint operator, since $\widetilde{S}_N$ is an isometry and $\textcolor{black}{(L_N - L_{min})} \mathrm{Id}$ is selfadjoint and \textcolor{black}{non-negative}.

As for the last point, we have
\begin{align*}
U_N^*(-\overline{z}) &= \left(e^{-i \overline{z} L_N}\right)^* S_N^*\\
&= e^{iz L_N} S_N^*.
\end{align*}
\textcolor{black}{This is the transpose of the matrix $U_N(z)$, which is thus similar to $U_N(z)$.} The same proof shows that $\widetilde{U}_N(z)$ is \textcolor{black}{similar} to $\widetilde{U}^*_N(-\overline{z})$.

Therefore, Hypothesis \ref{HypRenforcee} is satisfied.

Finally, note that, since $(\cQ_N)$ converges in the sense of Benjamini-Schramm to $\mathbb{P}$, Hypothesis \textbf{(Weakly Open)} implies that $(\widetilde{\cQ}_N)$ does also converge to $\mathbb{P}$. Along with Theorem \ref{th:Rappel}, this shows that all the assumptions of Corollary \ref{cor} are fulfilled, which concludes the proof.

\end{proof}

\begin{rem}
\textcolor{black}{In what precedes, Hypothesis \emph{\textbf{(Unbalanced)}} is used only to obtain the bound (\ref{eq:HypResolv2}) from Hypothesis \ref{Hyp:GenRes}. We may thus wonder if this hypothesis is really necessary.}

\textcolor{black}{Actually, Theorem \ref{th:BS} cannot hold if a hypothesis like  \emph{\textbf{(Unbalanced)}} is not made, as can be seen in the following simple example.
Consider a graph with edges $x_0, ..., x_N$, with edges of length $L$ joining $x_j$ to $x_{j+1}$ for $0 \leq j \leq N-1$, and with a lead attached to $x_0$ and to $x_N$. Having a scattering resonance on such a graph is equivalent to having a scattering resonance for the free Laplacian on $\R$, which can happen only at $z=0$. However, if the leads are removed, resonances correspond to square-roots of the eigenvalues of the Neumann Laplacian on $[0, LN]$, which are regularly spaced at a distance $\frac{\pi}{LN}$, so that $\left(\frac{1}{N} \mu_{\widetilde{\mathcal{Q}}_N}\right)$ converges to a non-trivial measure. Hence, the result of Theorem \ref{th:BS} does not hold in this case.}
\end{rem}

\section{Tools from complex analysis}\label{sec:Complex}
The aim of this section is to recall the facts from complex analysis which we will use in the proof of Theorems \ref{th:FewRes2} and \ref{th:ResClose}.
\subsection{Jensen's formula}
Let $z_0\in \C$ and let $f$ be a holomorphic function such that $f(z_0)\neq 0$. For every $t>0$, denote by $n(t)$ the number of zeroes of $f(z)$ such that $|z-z_0|<t$. Recall that Jensen's formula tells us that
\begin{equation*}
\int_0^r \frac{n(t)}{t} \mathrm{d}t + \log |f(0)| = \frac{1}{2\pi} \int_0^{2\pi} \log |f(e^{i\theta} r)| \mathrm{d}\theta. 
\end{equation*}

Therefore, for any $\varepsilon>0$, we have

\begin{equation}\label{eq:Jensen10}
\begin{aligned}
n(r) &\leq \frac{1}{\log (1+ \varepsilon)}\int_r^{(1+\varepsilon) r} \frac{n(t)}{t} \mathrm{d}t\\
 &\leq \frac{1}{\log (1+ \varepsilon)} \left( \log \max_{|z-z_0|= (1+\varepsilon) r} |f(z)| - \log |f(z_0)| \right).
\end{aligned}
\end{equation}

\subsection{Consequences of the maximum principle}\label{sec:Max}
In all this section, we fix $z_0\in \C$ and $s,t>0$. We also take $0 <\alpha < 1$. We define the rectangle
$$\mathbf{R}_{z_0, s,t} := \mathcal{R}_{\Re z_0 - s, \Re z_0 + s, \Im z_0 - t, \Im z_0 + t},$$
and write
$$r_M:= \max(s,t), ~~ r_m:= \min(s,t).$$

Let $\mathcal{K}\subset \C$ be a bounded set containing  $\mathbf{R}_{z_0, s,t}$.

%Let $a, c\in \R$, and let $s,t>0$. Let us write
%and let  $z_0\in \mathcal{S}_{a,\frac{r}{2}}$ 
%and $R>0$ be such that $\mathcal{O}\subset D(z_0, R)$.

 Let $f$ be a holomorphic function on $\C$. We denote the zeroes of $f$ in $\mathcal{K}$, repeated with multiplicity, by $(z_j)_{j=1,..., J}$, with $J= J(f, \mathcal{K})$. Note that, if we take $\mathcal{K} = \mathbf{R}_{z_0, s,t}$, then, thanks to Jensen's formula, we have
 
 \begin{equation}\label{eq:JensenTrivial}
 J\leq \frac{1}{\log 2} \left( \max_{D(z_0, 2\sqrt{2}r_M)} \log |f| - \log |f(z_0)|\right).
 \end{equation}
 
We set
\begin{equation}\label{eq:DefBN}
\mathcal{B}_f(z):= \prod_{j=1}^{J} (z-z_j),
\end{equation}
so that, for all $z\in \mathcal{S}_{z_0, r}\subset \mathcal{K}$
\begin{equation}\label{eq:UpperB}
|\mathcal{B}_f(z)| \leq \mathrm{Diam}(\mathcal{K})^J.
\end{equation}

Since $\mathcal{B}_f$ has the same zeroes as $f$ in $\mathcal{K}$, we deduce that the map 
\begin{equation}\label{eq:DefGN}
 G_f :=  \frac{f}{\mathcal{B}_f}
 \end{equation}
is holomorphic on $\C$ and does not vanish in  $\mathcal{K}$.

We recall the following lemma, which was proven in \cite[Lemma 8.1]{Sj2}.
\begin{tcolorbox}
\begin{lemme}\label{lem:SjMagic2}
Let $x_1,..., x_J\in \R$, and let $I\subset \R$ be a bounded interval. Then there exists $x\in I$ such that
$$\prod_{j=1}^J |x-x_j| \geq \exp \left[-J \left(1 + \log \frac{2}{|I|}\right)\right].$$
\end{lemme}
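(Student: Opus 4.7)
The plan is to prove this lemma by averaging $\log \prod_{j=1}^J |x - x_j|$ over the interval $I$ and invoking the pigeonhole principle: since the average of a real-valued function over $I$ is at most its supremum, it suffices to give a lower bound on $\frac{1}{|I|}\int_I \sum_{j=1}^J \log|x-x_j|\,\mathrm{d}x$.

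First, by linearity, this reduces to a lower bound for a single integral $\int_I \log|x - x_j|\,\mathrm{d}x$. After the translation $y = x - x_j$, this becomes $\int_\alpha^{\alpha+L} \log|y|\,\mathrm{d}y$, where $L = |I|$ and $\alpha \in \R$ depends on $x_j$ and on $I$. I would then minimize the function
\[
g(\alpha) := \int_\alpha^{\alpha+L} \log|y|\,\mathrm{d}y
\]
over $\alpha\in\R$. Away from the two singularities, $g'(\alpha) = \log|\alpha+L| - \log|\alpha|$, which vanishes exactly at $\alpha = -L/2$; combined with the symmetry $g(\alpha) = g(-\alpha - L)$, this identifies $\alpha = -L/2$ as the global minimum. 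An elementary antiderivative computation gives
\[
g(-L/2) = 2\int_0^{L/2} \log y \,\mathrm{d}y = L\left(\log(L/2) - 1\right).
\]

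Summing over $j$ and dividing by $|I|$ yields
\[
\frac{1}{|I|}\int_I \sum_{j=1}^J \log|x - x_j|\,\mathrm{d}x \geq J\left(\log(|I|/2) - 1\right) = -J\left(1 + \log\frac{2}{|I|}\right),
\]
so there exists $x\in I$ for which $\sum_{j=1}^J \log|x - x_j|$ is at least this value; exponentiating gives the claim. There is no real obstacle: the only mildly delicate point is the minimization of $g(\alpha)$, which is handled cleanly by the symmetry argument, and one must be careful that $\log|y|$ is integrable near $y = 0$ so that the antiderivative computation $[y\log y - y]$ is valid down to $y=0$.
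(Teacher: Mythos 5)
Your proof is correct: the averaging/pigeonhole argument, the reduction to minimizing $g(\alpha)=\int_\alpha^{\alpha+L}\log|y|\,\dd y$ at $\alpha=-L/2$, and the computation $g(-L/2)=L(\log(L/2)-1)$ all check out (and the local integrability of $\log|y|$ ensures both the antiderivative computation and the passage from the lower bound on the average to the existence of a good point $x$). The paper does not reprove this lemma but cites it from Sjöstrand's \cite[Lemma 8.1]{Sj2}, whose proof is exactly this averaging argument, so your approach is essentially the same as the source's.
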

\end{tcolorbox}

Note that for all $x, y\in \R$, we have
\begin{equation}\label{eq:LowerBlaschke3}
|\mathcal{B}_f(x +iy)| \geq \prod_{j=1}^{J} |x - \Re z_j|
\end{equation}
\begin{equation}\label{eq:LowerBlaschke4}
|\mathcal{B}_f(x +iy)| \geq \prod_{j=1}^{J} |y - \Im z_j|.
\end{equation}

%Suppose that $\mathcal{O}= [a,b] +i [c,d]$. Let $\varepsilon>0$ be small enough so that $\mathcal{O} := [a + \varepsilon,b - \varepsilon ] +i [c+ \varepsilon  ,d - \varepsilon]$ has a non-empty interior.

We deduce from Lemma \ref{lem:SjMagic2} that we may find $x^1\in [\Re z_0 -s, \Re z_0-\alpha s]$, $x^2\in [\Re z_0+ \alpha s, \Re z_0 + s]$, $y^1\in [\Im z_0 - t, \Im z_0 - \alpha t]$, $y^2\in [\Im z_0 + \alpha t, \Im z_0 +t]$ such that, for $k=1,2$
\begin{equation}\label{eq:Confinement2.0}
\begin{aligned}
\forall y \in \R, ~~|\mathcal{B}_f(x^k +iy)| \geq \exp \left[-J \left(1 + \log \frac{2}{(1-\alpha) s}\right)\right]\\
\forall x\in \R,~~ |\mathcal{B}_f(x +iy^k)| \geq \exp \left[-J \left(1 + \log \frac{2}{(1-\alpha) t}\right)\right].
\end{aligned}
\end{equation}

We have just shown that
$$\forall z\in \partial \mathcal{R}_{x^1,x^2,y^1,y^2},~~ - \log |\mathcal{B}_f(z)| \leq J \left(1 + \log \frac{2}{\textcolor{black}{(1-\alpha)}r_m}\right).$$

Therefore, 
\begin{equation}\label{eq:AvantHarnack}
 \forall z\in \partial \mathcal{R}_{x^1,x^2,y^1,y^2},~~ |\log G_f(z)| \leq \log \sup_{z\in \mathbf{R}_{z_0, s,t}} |f(z)| + J \left(1 + \log \frac{2}{(1-\alpha) r_m}\right) =: M(f).
 \end{equation}

Now, since $\log |G_f(z)|$ is harmonic, the maximum principle tells us that (\ref{eq:AvantHarnack}) actually holds for all $z\in \mathbf{R}_{z_0, \alpha s ,\alpha t} \subset \mathcal{R}_{x^1,x^2,y^1,y^2}$.

Hence, the harmonic function $m(z) := 2M(f) - \log |G_f(z)|$ remains positive in $\mathbf{R}_{z_0, \alpha s ,\alpha t}$. We may thus apply Harnack's inequality to find a constant $C$ depending only on the ratio $\frac{s}{t}$ such that
\begin{equation}\label{eq:Harnack2}
\sup_{z\in \mathbf{R}_{z_0, \alpha s ,\alpha t} } m(z) \leq C \inf_{z\in \mathbf{R}_{z_0, \alpha s ,\alpha t}  } m(z) \leq C m(z_0).
\end{equation}

We deduce that, for any  $z\in \mathbf{R}_{z_0, \alpha s ,\alpha t}$, we have
\begin{equation*}
 |\log G_f(z)| \leq C \left(2 M(f) - \log |G_f(z_0)|\right)\leq C \big(2 M(f) + \left|\log |f(z_0)|\right| + J \left| \log \left( \mathrm{Diam}(\mathcal{K})\right) \right| \big). 
 \end{equation*}
 
 In other words, if we write
 \begin{equation}\label{eq:DefM}
 M(f,z_0,s,t,\alpha, \mathcal{K}) :=  \left(1+ \mathrm{Diam}(\mathcal{K}) +  |\log (1-\alpha) r_m| \right) \left(J(f, \mathcal{ K}) + \log \sup_{z\in \mathbf{R}_{z_0, s,t}} |f(z)| + \left|\log |f(z_0)| \right| \right),
 \end{equation}
we have, for all $z\in \mathbf{R}_{z_0, \alpha s ,\alpha t}$
 \begin{equation}
  |\log G_f(z)| \leq C  M(f,z_0,s,t,\alpha, \mathcal{K}).
 \end{equation}
 
  Since $\log |G_f|$ is harmonic, we deduce from standard elliptic gradient estimates (see for instance \cite[Theorem 3.9]{GilTru}) that
\begin{equation*}
\forall z \in \mathbf{R}_{z_0, \alpha^2 s,\alpha^2 t}, ~~ \left| \partial_z \log|G_f(z)|\right| \leq \frac{C}{r_m(1-\alpha)}  M(f,z_0,s,t,\alpha, \mathcal{K}). 
\end{equation*}

Now, $\log |G_f| = \Re \log G_f$, where $\log G_f$ is only well-defined modulo $2i\pi$, but is holomorphic. Hence, $\partial_z \log G_f(z)$ is well-defined, and by the Cauchy-Riemann equations, we have

\begin{equation}\label{eq:BorneDerivG}
\forall z \in \mathbf{R}_{z_0, \alpha^2 s,\alpha^2 t}, ~~ \left| \partial_z \log G_f(z) \right| \leq \frac{C}{r_m (1-\alpha)}  M(f,z_0,s,t,\alpha, \mathcal{K}). 
\end{equation}

We may sum up the results obtained in this section so far in the following lemma.

 \begin{tcolorbox}
\begin{lemme}\label{Lem:BorneDerivG}
Let $z_0\in \C$, let $s,t>0$, $0<\alpha<1$, and let $\mathcal{K}\subset \C$ be a bounded set containing $ \mathbf{R}_{z_0, s,t} := \mathcal{R}_{\Re z_0 - s, \Re z_0 + s, \Im z_0 - t, \Im z_0 + t}$.

There exists a constant $C= C(s/t)$ such that, if $f$ is a holomorphic function on $\mathbb{C}$, we have for all $ z \in \mathbf{R}_{z_0, \alpha^2 s,\alpha^2 t}$

\begin{equation*}
\left| \partial_z  \textcolor{black}{\log} \left(\frac{ f(z)}{\prod_{j=1}^J (z-z_j)}\right) \right| \leq \frac{C}{\min(s,t) (1-\alpha)}M(f,z_0,s,t,\alpha, \mathcal{K}),
\end{equation*}
with $M(f,z_0,s,t,\alpha, \mathcal{K})$ as in (\ref{eq:DefM}), and where $(z_j)_{j=1,...,J}$ are the zeroes of $f$ in $\mathcal{K}$.
\end{lemme}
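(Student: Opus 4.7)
The statement is essentially a packaging of the chain of estimates developed in Section \ref{sec:Max}, so my plan is to follow that chain carefully and make sure the final bound is put in the form claimed. The idea is to divide out the zeros of $f$ inside $\mathcal{K}$ and control the logarithmic derivative of the resulting non-vanishing holomorphic factor by potential-theoretic means.

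First, I would introduce the Blaschke-type product $\mathcal{B}_f(z) := \prod_{j=1}^{J} (z - z_j)$ over the zeros of $f$ in $\mathcal{K}$, so that $G_f := f/\mathcal{B}_f$ is holomorphic and non-vanishing on $\mathcal{K}$. Consequently a holomorphic branch of $\log G_f$ exists (modulo $2i\pi$), $\log|G_f| = \Re \log G_f$ is harmonic, and $\partial_z \log G_f = \partial_z \log(f/\prod_j(z-z_j))$ is exactly the quantity to estimate.

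Next, I would invoke the averaging lemma \ref{lem:SjMagic2} to choose $x^1 \in [\Re z_0 - s, \Re z_0 - \alpha s]$, $x^2 \in [\Re z_0 + \alpha s, \Re z_0 + s]$ and similarly $y^1, y^2$, on which the lower bounds (\ref{eq:LowerBlaschke3})--(\ref{eq:LowerBlaschke4}) combined with (\ref{eq:Confinement2.0}) give $|\mathcal{B}_f| \geq \exp[-J(1+\log(2/((1-\alpha) r_m)))]$ on the boundary of the rectangle $\mathcal{R}_{x^1,x^2,y^1,y^2}$. Together with the trivial upper bound $\log|f| \leq \log\sup_{\mathbf{R}_{z_0,s,t}} |f|$ this yields $|\log G_f| \leq M(f)$ on $\partial \mathcal{R}_{x^1,x^2,y^1,y^2}$, with $M(f)$ as in (\ref{eq:AvantHarnack}). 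Since $\log|G_f|$ is harmonic and $\mathbf{R}_{z_0,\alpha s,\alpha t} \subset \mathcal{R}_{x^1,x^2,y^1,y^2}$, the maximum principle propagates this bound to $\mathbf{R}_{z_0,\alpha s,\alpha t}$.

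Then I would apply Harnack's inequality to the positive harmonic function $m(z) := 2M(f) - \log|G_f(z)|$ on $\mathbf{R}_{z_0,\alpha s,\alpha t}$, whose Harnack constant depends only on the aspect ratio $s/t$, to get a two-sided bound $|\log G_f(z)| \lesssim M(f) + |\log|f(z_0)|| + J\log\mathrm{Diam}(\mathcal{K})$, i.e.\ by a constant multiple of $M(f,z_0,s,t,\alpha,\mathcal{K})$. Standard interior gradient estimates for harmonic functions (with scale $r_m(1-\alpha)$) on the slightly smaller rectangle $\mathbf{R}_{z_0,\alpha^2 s,\alpha^2 t}$ then give $|\nabla \log|G_f|| \leq C/(r_m(1-\alpha)) \cdot M(f,z_0,s,t,\alpha,\mathcal{K})$, and the Cauchy--Riemann equations upgrade this to the same bound on $|\partial_z \log G_f|$. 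Since $\partial_z\log G_f = \partial_z\log(f/\prod_j(z-z_j))$, the lemma follows.

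The only genuinely delicate step is the lower bound on $|\mathcal{B}_f|$: on the original rectangle $\mathbf{R}_{z_0,s,t}$ this product can be as small as one wishes near the $z_j$'s, and without a good lower bound the upper bound on $|\log G_f|$ propagated by the maximum principle would be useless. Sjöstrand's averaging lemma \ref{lem:SjMagic2} is precisely what gives the existence of the good coordinates $x^k, y^k$, and combining it with the elementary estimates (\ref{eq:LowerBlaschke3})--(\ref{eq:LowerBlaschke4}) is the key point that forces the $\log(2/((1-\alpha) r_m))$ factor in the definition of $M(f,z_0,s,t,\alpha,\mathcal{K})$.
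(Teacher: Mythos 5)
Your proposal is correct and follows essentially the same route as the paper: the lemma is indeed just the packaging of the chain of estimates in Section \ref{sec:Max}, namely the factorization $f = \mathcal{B}_f G_f$, the choice of good abscissae via Lemma \ref{lem:SjMagic2} to lower-bound $|\mathcal{B}_f|$ on a boundary rectangle, the maximum principle, Harnack's inequality applied to $2M(f)-\log|G_f|$, and finally interior gradient estimates combined with the Cauchy--Riemann equations. You have also correctly identified the crux of the argument (the lower bound on the Blaschke-type product), so nothing is missing.
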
 
 \end{tcolorbox}
 
\paragraph{Lower bounds for $f$ on segments} 
 
 Let $I_1 \subset [\Re z_0 - \alpha s, \Re z_0 + \alpha s]$ and $I_2 \subset [\Im z_0 - \alpha t, \Im z_0 + \alpha t]$ be intervals.
Repeating the argument leading to (\ref{eq:Confinement2.0}), we see that we may find $x\in I_1$, $y \in I_2$ such that, for all $x'\in [\Re z_0 - \alpha s, \Re z_0 + \alpha s]$ and $y' \textcolor{black}{\in} [\Im z_0 - \alpha t, \Im z_0 +\alpha t]$, we have
\begin{equation}\label{eq:JeBoisTropDeCafe}
\begin{aligned}
| \mathcal{B}_f(x + i y')| &\geq \exp \left[ -J \left( 1 + \log \frac{2}{|I_1|}\right)\right]\\
| \mathcal{B}_f(x' + i y)| &\geq \exp \left[ -J \left( 1 + \log \frac{2}{|I_2|}\right)\right].
\end{aligned}
\end{equation}
%Therefore, we have

%\begin{align*}
%\left|\log |f(x+iy')|\right| \leq C J \left( 1 + |\log I_1| + |\log f(z_0)| + \sup_{z\in \mathbf{R}_{z_0, \frac{s}{2},\frac{t}{2}} } \log |f(z)|\right), \\
%\left|\log |f(x'+iy)|\right| \leq C J \left( 1 + |\log I_2| + |\log f(z_0)| + \sup_{z\in \mathbf{R}_{z_0, \frac{s}{2},\frac{t}{2}} } \log |f(z)|\right)
%\end{align*}
%where the constant $C$ depends only on $r$.

Let us take $\mathcal{K}=\mathbf{R}_{z_0, s,t} $. Then, recalling (\ref{eq:DefM}) and (\ref{eq:JensenTrivial}), we deduce that there exists a constant $C=C(s,t)$ such that
 \begin{equation*}
 M(f,z_0,s,t,\alpha, \mathcal{K}) \leq  C \left(1 +  |\log (1-\alpha) r_m| \right) \left(\log \sup_{z\in D(z_0, 2\sqrt{2} r_M)} |f(z)|+ \left|\log |f(z_0)| \right| \right).
 \end{equation*}

Combining this with (\ref{eq:JeBoisTropDeCafe}), we obtain the following result.

\begin{tcolorbox}
\begin{lemme}\label{lem:GoodPoint}
Let $z_0\in \C$, let $s,t>0$, $0<\alpha<1$. Let $I_1 \subset [\Re z_0 - \alpha s, \Re z_0 + \alpha s]$ and $I_2 \subset [\Im z_0 - \alpha t, \Im z_0 + \alpha t]$ be intervals.
There exists a constant $C=C(s,t)$ such that the following holds.

For any $f$ holomorphic function on $\C$, we may find $x\in I_1$, $y \in I_2$ such that, for all $x'\in [\Re z_0 - \alpha s, \Re z_0 + \alpha s]$ and $y' \textcolor{black}{\in} [\Im z_0 - \alpha t, \Im z_0 +\alpha t]$, we have
\begin{equation*}
\begin{aligned}
\left| \log |f(x+iy')| \right| \leq C' \left( \log \sup_{z\in D(z_0, 2\sqrt{2} \max(s,t))} |f(z)| + \left| \log |f(z_0)| \right| + \left| \log |I_1| \right|  \right)\\
\left| \log |f(x'+iy)| \right| \leq C' \left( \log \sup_{z\in D(z_0, 2\sqrt{2}\max(s,t))} |f(z)| + \left| \log |f(z_0)| \right| + \left| \log |I_2| \right|  \right).
\end{aligned}
\end{equation*}
\end{lemme}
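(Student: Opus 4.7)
The plan is to assemble the lemma directly from the machinery already developed in Section \ref{sec:Max} with a convenient choice of $\mathcal{K}$. I take $\mathcal{K} = \mathbf{R}_{z_0, s, t}$ and work with the factorization $f = \mathcal{B}_f \cdot G_f$ from (\ref{eq:DefBN})--(\ref{eq:DefGN}), where $\mathcal{B}_f$ collects the zeroes $(z_j)_{j=1,\dots,J}$ of $f$ in this rectangle and $G_f$ is zero-free on $\mathcal{K}$. The target bounds split naturally: $\log|f| = \log|\mathcal{B}_f| + \log|G_f|$, so I need a suitable lower bound on $|\mathcal{B}_f|$ along certain horizontal and vertical segments, and a two-sided bound on $\log|G_f|$ on the smaller rectangle.

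First I would apply Lemma \ref{lem:SjMagic2} to the real projections $(\Re z_j)$ of the zeroes on the interval $I_1$, and to the imaginary projections $(\Im z_j)$ on $I_2$, exactly as in the derivation of (\ref{eq:JeBoisTropDeCafe}). This yields $x \in I_1$ and $y \in I_2$ with
\[
\prod_{j=1}^J |x - \Re z_j| \geq \exp\!\left[-J\bigl(1+\log(2/|I_1|)\bigr)\right], \qquad \prod_{j=1}^J |y - \Im z_j| \geq \exp\!\left[-J\bigl(1+\log(2/|I_2|)\bigr)\right].
\]
Combined with (\ref{eq:LowerBlaschke3}) and (\ref{eq:LowerBlaschke4}), this gives, for all admissible $x', y'$,
\[
-\log|\mathcal{B}_f(x+iy')| \leq J\bigl(1+\log(2/|I_1|)\bigr), \qquad -\log|\mathcal{B}_f(x'+iy)| \leq J\bigl(1+\log(2/|I_2|)\bigr).
\]
The trivial upper bound $|\mathcal{B}_f(z)| \leq \mathrm{Diam}(\mathcal{K})^J$ from (\ref{eq:UpperB}) controls the other direction.

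Next, I would invoke the Harnack step from Section \ref{sec:Max}: the harmonic function $m(z) = 2M(f) - \log|G_f(z)|$ is positive on $\mathbf{R}_{z_0, \alpha s, \alpha t}$, and Harnack's inequality applied in (\ref{eq:Harnack2}) gives $|\log|G_f(z)|| \leq C\, M(f, z_0, s, t, \alpha, \mathcal{K})$ on that rectangle, with $C = C(s/t)$. Using Jensen's bound (\ref{eq:JensenTrivial}) to replace the zero-count $J$ in the definition (\ref{eq:DefM}) of $M$, I obtain
\[
M(f, z_0, s, t, \alpha, \mathcal{K}) \leq C_{s,t}\bigl(1 + |\log(1-\alpha)r_m|\bigr)\Bigl(\log \sup_{D(z_0, 2\sqrt{2}r_M)}|f| + \bigl|\log|f(z_0)|\bigr|\Bigr).
\]

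Finally I would add the two contributions. The upper bound on $\log|f|$ is immediate from the sup on the rectangle, and the lower bound follows by writing $-\log|f| \leq -\log|\mathcal{B}_f| + |\log|G_f||$, using step one for the first term and the Harnack step for the second; the term $J\log(2/|I_1|)$ produces the $|\log|I_1||$ contribution (and similarly for $I_2$) after bounding $J$ by Jensen, which is exactly the form appearing in the statement. The whole argument is essentially bookkeeping: the only genuine subtlety is ensuring the constants $C$ and $C'$ depend only on the ratio $s/t$ and on $s,t,\alpha$ and not on $f$, which is guaranteed because every estimate in Section \ref{sec:Max} was set up with that uniformity in mind. I do not anticipate a real obstacle; the main care is in tracking that the Jensen bound (\ref{eq:JensenTrivial}) absorbs the $J$ factor into the $\log\sup|f| + |\log|f(z_0)||$ combination, so that the $|\log|I_i||$ term enters only linearly as stated.
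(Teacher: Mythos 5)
Your proposal follows exactly the paper's own route: the paper proves this lemma in the paragraph ``Lower bounds for $f$ on segments'' of Section \ref{sec:Max} by taking $\mathcal{K}=\mathbf{R}_{z_0,s,t}$, applying Lemma \ref{lem:SjMagic2} to the real and imaginary projections of the zeroes to get (\ref{eq:JeBoisTropDeCafe}), bounding $J$ via (\ref{eq:JensenTrivial}) inside the definition (\ref{eq:DefM}) of $M$, and combining with the Harnack estimate on $\log|G_f|$ over $\mathbf{R}_{z_0,\alpha s,\alpha t}$. The argument is correct and the bookkeeping is exactly as in the paper.
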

\end{tcolorbox}

\section{Proof of Theorem \ref{th:FewRes2}}\label{sec:ProofFew}
We now proceed with the proof of Theorem \ref{th:FewRes2}. In section \ref{sec:General}, our operators $U_N(z)$ were defined for $\Im z< Y_0$. To lighten notations, we will always take $Y_0=+\infty$, which is always the case when considering quantum graphs. The case $Y_0$ finite doesn't add any essential modification to the proof, as our complex-analytic constructions always take place below the real axis, or slightly above it.

\begin{proof}[Proof of Theorem \ref{th:FewRes2}]
\textcolor{black}{Without loss of generality, we may suppose that $a_3 > 2Y_1$, with $Y_1$ as in Hypothesis \ref{Hyp:GenRes}.}

\textbf{Step 1: Some estimates on determinants}
On $\C \backslash \R$, we may write
\begin{align*}
 \mathrm{\det} (\mathrm{Id} - U_N) =  \mathrm{\det} (\mathrm{Id} - \widetilde{U}_N) \times  \mathrm{\det} \big[\mathrm{Id} - (\mathrm{Id}-\widetilde{U}_N)^{-1} \textcolor{black}{(U_N- \widetilde{U}_N)}\big],
\end{align*}
so that
\begin{equation*}
z \in \C\backslash\R \text{ is a resonance } \Longleftrightarrow \mathrm{\det} (\mathrm{Id} - K_N(z)) = 0,
\end{equation*}
where 
$$K_N(z):= (\mathrm{Id}-\widetilde{U}_N(z))^{-1} \textcolor{black}{(U_N(z) - \widetilde{U}_N(z)}.$$

Note that if $z$ is such that $\mathrm{Id}-U_N(z)$ is invertible, we have
\begin{equation}\label{eq:InverseK2}
(\mathrm{Id} - K_N(z))^{-1} = \mathrm{Id} - (\mathrm{Id}-U_N(z))^{-1} \textcolor{black}{(\widetilde{U}_N(z)- U_N(z))}.
\end{equation}

Thanks to (\ref{eq:HypResolv}) and (\ref{eq:PropSchatten}), we have, for any $z\in \C \backslash \R$
\begin{equation}\label{eq:BorneDet10}
\begin{aligned}
|\mathrm{\det} (\mathrm{Id} - K_N(z))| &\leq e^{\|K_N(z)\|_1} \\
&\leq \exp\left[\|(\mathrm{Id}-\widetilde{U}_N(z))^{-1}\| \|\widetilde{U}_N(z)- U_N(z)\|_1\right] \\
&\leq  \exp\left[ \frac{O(1)}{|\Im z|} \|\widetilde{U}_N(z)- U(z)\|_1\right]. 
\end{aligned}
\end{equation}

\textcolor{black}{Finally, if  $\Im z=- \frac{a_3}{2}$ and $\Re z\in [a_1, a_2]$, we have, thanks to Hypothesis \ref{Hyp:GenRes} and to the fact  that $\frac{a_3}{2}> Y_1$}
\begin{equation}\label{eq:BorneDetInv10}
\begin{aligned}
\frac{1}{|\mathrm{\det} (\mathrm{Id} - K_N(z))|} &\leq \exp\left[\|(\mathrm{Id}-U_N(z))^{-1}\| \|\widetilde{U}_N(z)- U_N(z)\|_1 \right]\\
&\leq \exp\left[ O(1) \|\widetilde{U}_N(z)- U_N(z)\|_1 \right]. 
\end{aligned}
\end{equation}

\textbf{Step 2: Using Jensen's formula}

\textcolor{black}{Let $(\delta_N)$ be a bounded sequence of positive numbers, and let $x\in [a_1, a_2]$.
We apply (\ref{eq:Jensen10}) to $f(z)= \det(\mathrm{Id}- K_N(z))$ with $z_0= x - i\frac{a_3}{2}$, $r_N = \frac{a_3}{2}- 2 \delta_N$, and $\varepsilon_N := \frac{2\delta_N}{a_3}$, so that $r'_N:= (1+\varepsilon_N) r_N < \frac{a_3}{2}- \delta_N$.}

By (\ref{eq:BorneDet10}), we have, provided that $N$ is large enough,
\begin{equation}\label{eq:UpperLog10}
\begin{aligned}
\log\left[\max_{|z-z_0|= r'_N} |f(z)| \right] &\leq \frac{O(1)}{|\Im z|} \|\widetilde{U}_N(z)- U_N(z)\|_1 \\
&\leq  \frac{O(1)}{\delta_N} \max_{|z-z_0|= r'_N} \left\|\widetilde{U}_N(z)- U_N(z)\right\|_1,
\end{aligned}
\end{equation}

while by (\ref{eq:BorneDetInv10}), we have
\begin{equation}\label{eq:LowerLog10}
\begin{aligned}
 -\log f(z_0)   = O\left(  \|\widetilde{U}_N(z_0)- U_N(z_0)\|_1\right).
 \end{aligned}
 \end{equation}

Combining (\ref{eq:Jensen10}) with (\ref{eq:UpperLog10}), (\ref{eq:LowerLog10}) and the fact that, for $N$ large enough, we have $\frac{1}{\log (1+\varepsilon_N)} \leq \frac{2}{\varepsilon_N} =   \frac{a_3}{\delta_N}$, we obtain that
\begin{equation}\label{eq:JensenRocks40}
\mathcal{N}_{\textcolor{black}{D}(z_0,r_N)}(U_N) =O \left(\delta_N^{-2} \sup_{z\in \textcolor{black}{D}(z_0, r'_N)}\|\widetilde{U}_N(z)- U_N(z)\|_1\right) = O\left(\gamma_N \delta_N^{-2}\right).
\end{equation}

\textbf{Step 3: Covering by balls}
By elementary Euclidean geometry, we may find $c_1>0$ such that, for all $\delta>0$ small enough and all $x \in \R$,
$$\left\{z\in \C ~|~  - a_3 < \Im z < - 2\delta \text{ and } \Re z\in \left[- x- c_1\sqrt{\delta}, x + c_1 \sqrt{\delta}\right] \right\}\subset \textcolor{black}{D\left( x - i \frac{a_3}{2}, \frac{a_3}{2} -\delta\right)}.$$

Hence, writing $x_k = a_1 +  k c_1 \sqrt{\delta_N}$, we have, up to taking $\delta_N$ smaller,
$$\mathcal{R}_{a_1,a_2, - a_3 - 2\delta_N} \subset \bigcup_{k=1}^{\lfloor \frac{a_2-a_1}{c_1 \sqrt{\delta_N}}\rfloor} \textcolor{black}{D\left( x_k - i \frac{a_3}{2}, \frac{a_3}{2}- \delta_N \right)}.$$

Therefore, applying (\ref{eq:JensenRocks40}), we obtain
$$\mathcal{N}_{a_1,a_2, -a_3, - 2\delta_N} (U_N)\leq  \left\lfloor \frac{a_2-a_1}{c_1 \sqrt{\delta_N}}\right\rfloor O\left( \frac{ \gamma_N}{\delta_N^2} \right) = O\left( \frac{\gamma_N}{\delta_N^{5/2}}\right).$$
Replacing $\delta_N$ by $\delta_N/2$ gives us the result.
\end{proof}

\section{Proof of Theorem \ref{th:ResClose}}\label{sec:ProofMain}
Thanks to the \textcolor{black}{last point of Hypothesis \ref{HypRenforcee}} we know that the resonances of $U_N$ and $\widetilde{U}_N$ are symmetric with respect to the imaginary axis. Hence, the number of resonances in a rectangle can be estimated using the number of resonances in rectangles that are symmetric with respect to the imaginary axis.

Therefore, Theorem \ref{th:ResClose} will follow from the following proposition.

\begin{tcolorbox}
\begin{proposition}\label{Prop:ResClose}
Let $(U_N)$ and $(\widetilde{U}_N)$ satisfy Hypotheses \ref{Hyp:GenRes}, \ref{Hyp:TildeU} and \ref{HypRenforcee}.

Let $a>0$. For any $\varepsilon>0$ and any $N\in \N$, we may find $a^+ \in [a+\varepsilon,a+ 2\varepsilon]$ and $a^-\in [a-2\varepsilon, a-\varepsilon]$ such that, for any $c>0$, we have 
$$\mathcal{N}_{-a^\pm, a^\pm, -c, 0}(U_N) - \mathcal{N}_{-a^\pm, a^\pm}(\widetilde{U}_N)=o(N).$$
\end{proposition}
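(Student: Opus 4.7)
The strategy is to apply the argument principle to the meromorphic quotient $h(z) := \det(I - U_N(z))/\det(I - \widetilde{U}_N(z)) = \det(I - K_N(z))$, where $f := \det(I - U_N)$, $g := \det(I - \widetilde{U}_N)$ and $K_N := (I - \widetilde{U}_N)^{-1}(U_N - \widetilde{U}_N)$. First, by Theorem~\ref{th:FewRes2} applied with $\delta_N \to 0$ chosen so that $\gamma_N/\delta_N^{5/2} = o(N)$ (for instance $\delta_N := (\gamma_N/N)^{1/3}$), one has $\mathcal{N}_{-a^\pm, a^\pm, -c, -\delta_N}(U_N) = o(N)$. Combined with the facts that $U_N$ has no resonances above $\R$ (Hypothesis~\ref{Hyp:GenRes}) and $\widetilde{U}_N$ has all its resonances on $\R$ (Hypothesis~\ref{Hyp:TildeU}), this reduces the statement to showing
\[
\mathcal{N}_R(U_N) - \mathcal{N}_R(\widetilde{U}_N) = o(N), \quad \text{where } R := [-a^\pm, a^\pm] + i[-c, \varepsilon_1]
\]
for some small fixed $\varepsilon_1 > 0$; the argument principle then gives that the left side equals $\frac{1}{2\pi i}\oint_{\partial R}(\log h)'(z)\,dz$.

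To make sense of this contour integral, I choose $a^\pm$ by a pigeonhole argument. The bounds $|f(z)|, |g(z)| \leq e^{O(N)}$ valid on any fixed compact set (by the uniform bound on $\|U_N\|, \|\widetilde{U}_N\|$ from Hypothesis~\ref{Hyp:GenRes} together with (\ref{eq:TraceBound})), combined with Jensen's inequality, show that the zeros of $fg$ in a suitable compact neighborhood of $R$ number at most $J = O(N)$. Applying Lemma~\ref{lem:SjMagic2} to the polynomial $\prod_j(x - \Re z_j)\prod_k(x - \Re w_k)$ running over these zeros furnishes $a^+ \in [a+\varepsilon, a+2\varepsilon]$ (and similarly $a^- \in [a-2\varepsilon, a-\varepsilon]$) for which this product is bounded below by $e^{-CN}$. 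Combined with the Harnack-type estimate of Section~\ref{sec:Max} applied to the non-vanishing factors $G_f, G_g$, this gives uniform lower bounds $|f(a^\pm + iy)|, |g(a^\pm + iy)| \geq e^{-C'N}$ for $y \in [-c, \varepsilon_1]$; in particular, $h$ is holomorphic and non-vanishing on the vertical sides of $\partial R$.

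On the horizontal pieces of $\partial R$ (at heights $\varepsilon_1$ and $-c$, both at fixed positive distance from $\R$), Hypotheses~\ref{Hyp:GenRes} and \ref{Hyp:TildeU} make the resolvents $(I - \widetilde{U}_N)^{-1}$ and $(I - U_N)^{-1}$ bounded independently of $N$, so that $\|K_N(z)\|_1 = O(\gamma_N)$, $|h(z) - 1| = O(\gamma_N)$, and the principal branch of $\log h$ is $O(\gamma_N)$ pointwise; the contribution of these sides is $O(\gamma_N) = o(N)$. The vertical sides, which cross $\R$, are the main obstacle: the trace bound $\|K_N(z)\|_1 \leq C\gamma_N/|\Im z|$ degenerates at $\Im z = 0$ and a naive pointwise estimate gives only $O(N)$. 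The plan is to decompose $\log h = (\log G_f - \log G_g) + (\log \mathcal{B}_f - \log \mathcal{B}_g)$ via the Blaschke factorization of Section~\ref{sec:Max}, bound the first parenthesis using Lemma~\ref{Lem:BorneDerivG} applied separately to $f$ and $g$, and show that the Blaschke ratio contributes only $o(N)$ to the argument variation by pairing each zero of $g$ on $\R$ with a nearby zero of $f$ in $R$ and exploiting that the trace-class smallness $\|U_N - \widetilde{U}_N\|_1 = O(\gamma_N)$ forces the mean displacement of matched pairs to be $o(1)$. Turning this pairing into a clean estimate across the real axis, where the trace bound fails, is the technical heart of the proof.
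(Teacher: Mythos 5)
Your skeleton is right up to the point you yourself flag as "the technical heart," but that point is precisely where the proof lives, and what you sketch there does not close the gap. Two concrete problems. First, the pointwise bounds you establish on the vertical sides are of the form $e^{-C'N}\le |f|,|g|\le e^{C'N}$; feeding bounds of this strength into Jensen's formula or into the argument principle can only ever yield an error of size $O(N)$, never $o(N)$. To get $o(N)$ you need the number of zeros of $\det(\mathrm{Id}-U_N)$ in a neighborhood of the crossing points $\pm a^\pm$ to be $o(N)$, and a fixed-size neighborhood of $a^\pm$ generically contains $\Theta(N)$ such zeros; so the neighborhood must shrink with $N$, and then the quotient $h=\det(\mathrm{Id}-K_N)$ cannot even be bounded from above there, because $\|K_N(z)\|_1\le C\gamma_N/|\Im z|$ blows up as the shrinking box touches $\R$. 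Second, the proposed pairing of zeros of $f$ with zeros of $g$ with "$o(1)$ mean displacement" is not a consequence of $\|U_N-\widetilde U_N\|_1=O(\gamma_N)$ by any result you cite or prove (zeros of $\det(\mathrm{Id}-SD(z))$ are not eigenvalues of a normal matrix, so Hoffman--Wielandt-type stability is unavailable), and even granted such a pairing, pairs whose real member lies within distance $d$ of the vertical line $\Re z=a^\pm$ each contribute up to $2\pi$ to the argument variation, so you would additionally need the number of real zeros of $g$ within distance $d$ of $a^\pm$ to be $o(N)$ --- a statement you never establish.

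The paper's device, which is absent from your proposal, is to modify $\widetilde U_N$ rather than to match zeros. A pigeonhole argument (using exactly the Harnack/Jensen machinery you invoke, applied to $\det(\mathrm{Id}-U_N)$, giving $|\det(\mathrm{Id}-U_N(a^\pm))|\ge e^{-CN}$) shows one can choose $a^\pm$ so that only $O(N/|\log\xi_N|)=o(N)$ eigenvalues of the unitary matrix $\widetilde U_N(a^\pm)$ lie within $\xi_N$ of $1$. One then composes $\widetilde U_N$ with a unitary $\Sigma_N^\pm$ rotating those eigenvalues by $e^{3i\xi_N}$, producing $\widetilde U_N^\pm$ with a genuine spectral gap $\|(\mathrm{Id}-\widetilde U_N^\pm(z))^{-1}\|\le 2/\xi_N$ in an $\xi_N$-neighborhood of $\pm a^\pm$. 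This is what makes $\det(\mathrm{Id}-K_N^\pm)$ bounded above and below across the real axis near the crossing points, yields only $o(N)$ zeros there by Jensen, and allows the contour (a rectangle of height $\sim\xi_N$ straddling $\R$) to be closed with $o(N)$ error. The price --- comparing the real zero counts of $\widetilde U_N^\pm$ and $\widetilde U_N$ --- is paid using the unitarity of $\widetilde U_N(x)$ and the monotonicity of the eigenphases from (\ref{eq:Positivite}), two hypotheses of Hypothesis \ref{HypRenforcee} that your argument never uses; their absence from your proof is itself a sign that the crossing step has not actually been performed.
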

\end{tcolorbox}

Let us now proceed with the proof of Proposition \ref{Prop:ResClose}.
Let $a\in \R$, and let $\varepsilon>0$. All the values of the constants in the sequel will implicitly depend on $a$ and $\varepsilon$. Let $(\xi_N)_N$ be a sequence decreasing to zero. Its exact value will be chosen later on.

For each $x\in \R$, the operator $\widetilde{U}_N(x)$ is unitary, so it can be diagonalised in an orthonormal basis. Let us denote by $\left(e^{i\theta_k(x)}\right)_{1\leq k\leq d_N}$ its eigenvalues. The $\theta_k$ are parametrised so that for each $k$, $x\mapsto \theta_k(x)$ is smooth; let $u_k(x)$ be an associated normalized eigenvector.

For each $x\in \R$, we write
$$\mathcal{K}_N(x) := \left\{1\leq  k \leq d_N \text{ such that } |e^{i\theta_k(x)}-1| \leq \xi_N \right\}.$$

\begin{tcolorbox}
\begin{lemme}\label{lem:SmallTunnels3}
Let $\varepsilon>0$. There exists $a^+\in [a+\varepsilon, a+ 2\varepsilon]$ and $a^-\in [a- 2\varepsilon, a- \varepsilon]$ depending of $N$, and $c'_1>0$ independent on $N$ such that
\begin{equation}\label{eq:Tunnels10}
|\mathcal{K}_N(a^\pm)| \leq C(\varepsilon) \frac{N}{|\log \xi_N|}.
\end{equation}
\end{lemme}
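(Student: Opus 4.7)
The plan is to combine the positivity hypothesis~\eqref{eq:Positivite} (which forces the phases $\theta_k$ to move strictly monotonically) with a direct Fubini/pigeonhole argument on an interval of length~$\varepsilon$.

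First I would derive the phase-velocity formula. Differentiating $\widetilde U_N(x)\,u_k(x) = \ee^{\ii\theta_k(x)}\,u_k(x)$, pairing with $u_k(x)$, and using $\widetilde U_N(x)^* u_k(x) = \ee^{-\ii\theta_k(x)}\,u_k(x)$ to cancel the contribution of $u_k'(x)$, yields
\begin{equation*}
\theta_k'(x) = \big\langle -\ii\,\widetilde U_N^{-1}(x)\,\widetilde U_N'(x)\,u_k(x),\,u_k(x)\big\rangle.
\end{equation*}
Taking $\mathcal{K}$ to be any compact interval containing $[a-2\varepsilon,\,a+2\varepsilon]$, hypothesis~\eqref{eq:Positivite} (consistent with the computation carried out in the proof of Theorem~\ref{th:BS}) delivers a uniform lower bound $\theta_k'(x) \geq c(\varepsilon) > 0$, while~\eqref{eq:BorneGlobaleDerivU} together with $\|\widetilde U_N^{-1}(x)\| = 1$ gives a matching upper bound $\theta_k'(x) \leq C(\varepsilon)$, both uniformly in $k$ and $N$. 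Hence every phase traverses $[a-2\varepsilon,\,a+2\varepsilon]$ strictly monotonically with slope pinched between two positive constants.

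Next, for each $k$ I would bound the Lebesgue measure of
\begin{equation*}
E_k := \big\{x \in [a-2\varepsilon,\,a+2\varepsilon] ~:~ |\ee^{\ii\theta_k(x)}-1| \leq \xi_N\big\}.
\end{equation*}
The condition $|\ee^{\ii\theta}-1|\leq \xi_N$ forces $\theta$ to lie within an interval of length $\lesssim \xi_N$ around each point of $2\pi\Z$. Since $\theta_k$ has total variation at most $4C(\varepsilon)\varepsilon$ on $[a-2\varepsilon,\,a+2\varepsilon]$, it crosses $2\pi\Z$ only $O_\varepsilon(1)$ times, and each crossing contributes a subinterval of $E_k$ of length $\lesssim \xi_N/c(\varepsilon)$. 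Therefore $|E_k| = O_\varepsilon(\xi_N)$, uniformly in $k$ and $N$.

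Summing then gives
\begin{equation*}
\int_{a+\varepsilon}^{a+2\varepsilon} |\mathcal K_N(x)|\,\dd x \;=\; \sum_{k=1}^{d_N}\big|E_k \cap [a+\varepsilon,\,a+2\varepsilon]\big| \;\leq\; C(\varepsilon)\,d_N\,\xi_N \;\leq\; C'(\varepsilon)\,N\,\xi_N
\end{equation*}
by~\eqref{eq:BornesDimension}, so averaging over an interval of length $\varepsilon$ produces some $a^+ \in [a+\varepsilon,\,a+2\varepsilon]$ with $|\mathcal K_N(a^+)| \leq C''(\varepsilon)\,N\,\xi_N$. Since $\xi|\log\xi| \leq 1/\ee$ on $(0,1)$, we have $N\xi_N \leq N/(\ee|\log\xi_N|)$, which upgrades the estimate to $|\mathcal K_N(a^+)| \leq C(\varepsilon)\,N/|\log\xi_N|$ as claimed; the argument on $[a-2\varepsilon,\,a-\varepsilon]$ is identical. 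The only nontrivial input in the whole plan is the lower bound $\theta_k' \geq c(\varepsilon)$ coming from~\eqref{eq:Positivite}; everything else is elementary measure theory, and I do not expect any real obstacle.
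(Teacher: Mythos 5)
Your argument is correct in outline but takes a genuinely different route from the paper. The paper's proof is complex-analytic: it applies Lemma \ref{lem:GoodPoint} (harmonic estimates plus Jensen's formula) to the determinant to produce a point $a^+$ where $\bigl|\log|\det(\mathrm{Id}-\widetilde U_N(a^+))|\bigr|=O(N)$, and then pigeonholes the factorization $\det=\prod_k(1-\ee^{\ii\theta_k})$ against $\xi_N^{|\mathcal K_N(a^+)|}$; this uses no dynamical information about the phases and naturally produces the $N/|\log\xi_N|$ bound. Your proof instead exploits the monotone motion of the phases coming from \eqref{eq:Positivite} together with a Fubini/averaging step, which is more elementary, avoids all of Section \ref{sec:Max}, and actually yields the stronger bound $|\mathcal K_N(a^\pm)|=O_\varepsilon(N\xi_N)$, which implies the stated one exactly as you say. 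The trade-off is that your route consumes the positivity hypothesis (which the paper only needs later, in Lemma \ref{lem:MemeNombreVP}), whereas the paper's choice of $a^+$ comes for free under Hypotheses \ref{Hyp:GenRes}--\ref{HypRenforcee} alone.

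There is one step you should repair. The uniform upper bound $\theta_k'(x)\leq C(\varepsilon)$ does not follow from \eqref{eq:BorneGlobaleDerivU}: that estimate controls $\|U_N'(z)\|$, not $\|\widetilde U_N'(z)\|$, and in the abstract framework the difference $U_N-\widetilde U_N$ is only controlled in trace norm by $O(\gamma_N)$ with $\gamma_N$ possibly unbounded (moreover \eqref{eq:BorneSurLesNormes} bounds $\|\widetilde U_N\|$ only in the upper half-plane, so a Cauchy-integral bound on $\widetilde U_N'$ at real points is not immediate). The fix is painless: you only need the upper bound after summing over $k$, and
\begin{equation*}
\sum_{k=1}^{d_N}\theta_k'(x)=\Tr\bigl(-\ii\,\widetilde U_N'(x)\,\widetilde U_N^{-1}(x)\bigr)\leq \|\widetilde U_N'(x)\|_1\leq \|U_N'(x)\|_1+\bigl\|(U_N-\widetilde U_N)'(x)\bigr\|_1=O(N)+O(\gamma_N)=O(N),
\end{equation*}
using \eqref{eq:TraceBound} and a Cauchy estimate applied to \eqref{eq:DefDiffTrace}. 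This bounds the total number of crossings of $2\pi\Z$ by $O(N)$, each contributing a set of measure $O(\xi_N/c(\varepsilon))$ by the lower bound on $\theta_k'$, so $\sum_k|E_k|=O_\varepsilon(N\xi_N)$ and the averaging step goes through unchanged. (In the quantum-graph application the issue disappears, since there $\|\widetilde U_N'(x)\|\leq L_{max}$ directly.)
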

\end{tcolorbox}

\begin{proof}
We will only explain the construction of $a^+$, as the construction of $a^-$ is exactly the same.
We apply the results of section \ref{sec:Max} for $z_0= a + \frac{3\varepsilon}{2} + i \frac{\varepsilon}{4}$, $s=t = \frac{\varepsilon}{2}$ and $\alpha=\frac{1}{\sqrt{2}}$, with $f(z) := \det(\mathrm{Id} - U_N(z))$.

Lemma \ref{lem:GoodPoint} gives us the existence of $a^+\in [a+ \varepsilon, a + 2\varepsilon]$ such that
$$|\log f(a^+)| = O\left( 1+ \log \sup_{z\in D(z_0, \sqrt{2}\varepsilon )} |f(z)| + \left| \log |f(z_0)| \right| \right).$$

Now, thanks to (\ref{eq:TraceBound}) and (\ref{eq:PropSchatten1}), we have $\log \sup_{z\in D(z_0, \sqrt{2} \varepsilon)} |f(z)|= O(N)$. On the other hand, we have 
\begin{align*}
\frac{1}{|\det(\mathrm{Id} - U_N(z_0))|} &= \left|\det \left[ \left(\mathrm{Id}- U_N(z_0)\right)^{-1}\right]\right|\\
&= \left|\det \left[ \mathrm{Id} + \left(\mathrm{Id}-U_N(z_0)\right)^{-1} U_N(z_0)\right]\right|\\
&\leq \exp\left[ \left\| \left(\mathrm{Id}-U_N(z_0)\right)^{-1} U_N(z_0) \right\|_1 \right]\\
&\leq \exp\left[ \left\| \left(\mathrm{Id}-U_N(z_0)\right)^{-1}\left\| \right\| U_N(z_0) \right\|_1 \right],
\end{align*}

so that, thanks to (\ref{eq:HypResolv0}) and   (\ref{eq:TraceBound}), we have
$$\left| \log |f(z_0)| \right| =O(N).$$

Therefore, we have
$$\left| \log |f(a^+)| \right| =O(N).$$

In particular,
$$|\det (U_N (a^+) - \mathrm{Id})| \geq e^{-CN}.$$

Now, we have 
\begin{align*}
|\det (U_N (a^+) - \mathrm{Id})| = \prod_{k=1}^{d_N} |1- e^{i\theta_k(a^+)}| &\leq 2^{d_N} \left( \prod_{\underset{k\in \mathcal{K}_N(x)}{k=1}}^N |1- e^{i\theta_k(\textcolor{black}{a^+})}| \right) \\&\leq C^N \xi_N^{|\mathcal{K}_N(\textcolor{black}{a^+})|}.
\end{align*}

Therefore, taking logarithms, we have
$$ -CN\leq  C' N  + |\mathcal{K}_N(a^+)| \log \xi_N,$$
so that

$$|\mathcal{K}_N(a^+)| =O\left(\frac{N}{|\log \xi_N|}\right).$$
\end{proof}

Recalling that the resonances of $U_N$ are isolated, we may do an arbitrarily small perturbation of $a^\pm$ from the previous lemma such that (\ref{eq:Tunnels10}) still holds, and we have
\begin{equation}\label{eq:NotARes}
\forall y \in \R, a^\pm + iy~~~~ \text{is not a resonance of $U_N$},
\end{equation}

From now on, we fix $a^\pm$ such that (\ref{eq:Tunnels10}) and (\ref{eq:NotARes}) hold. We write
$$V_N^\pm := \mathrm{Vect} \left(\left\{ u_k(-a^\pm) \text{ with } k \in \mathcal{K}_N(-a^\pm) \right\}\right).$$

We define a unitary matrix $\Sigma^\pm_N$ of size $d_N$ by
$$\Sigma^\pm_N = \begin{cases}
\mathrm{Id} &\text{ on } \left(V_N^\pm\right)^\perp\\
e^{3i \xi_N} \mathrm{Id}  &\text{ on } V_N^\pm.
\end{cases}$$

Note that we have $\|\Sigma_N^\pm -\mathrm{Id}\| = O(\xi_N)$ and $\|\Sigma_N^\pm -\mathrm{Id}\|_1 = O\left(\frac{N \xi_N}{|\log \xi_N|}\right)$.
Finally, we write
$$\widetilde{U}_N^\pm(z) := \Sigma^\pm_N \widetilde{U}_N(z).$$
Hence, $\widetilde{U}_N^\pm(z)$ is a holomorphic family of matrices, and for any $z$ in a compact set, there exists $C$ such that
\begin{equation}\label{eq:TaillePerturbNorme}
\|\widetilde{U}_N^\pm(z)- \widetilde{U}_N(z)\|\leq C \xi_N
\end{equation}
\begin{equation}\label{eq:TaillePerturb}
\|\widetilde{U}_N^\pm(z)- \widetilde{U}_N(z)\|_1 \leq C \frac{N \xi_N}{|\log \xi_N|}.
\end{equation}

Furthermore, if we denote by $\theta^{\pm,N}_k(z)$ the eigenvalues of $\widetilde{U}_N^\pm(z)$, we have
\begin{equation}\label{eq:SpectralGap}
\forall k=1,..., d_N,~~ \left| e^{i \theta^{\pm,N}_k(-a^\pm)} - 1 \right| \geq \xi_N.
\end{equation}

Recall that, if $a_1<a_2\in \R$, we denote by 
$\mathcal{N}_{a_1,a_2}(\widetilde{U}_N)$ the number of resonances of $\widetilde{U}_N$ in $[a_1,a_2]$ counted with multiplicity. We will also denote by $\mathcal{N}_{a_1,a_2}(\widetilde{U}^\pm_N)$  the number of zeroes of $\det\left( \mathrm{Id} - \widetilde{U}^\pm(x)\right)$ for $x\in (a_1,a_2)$, counted with multiplicities. Proposition \ref{Prop:ResClose} will follow from the next two lemmas.

\begin{tcolorbox}
\begin{lemme}\label{lem:MemeNombreVP}
We have
$$\mathcal{N}_{-a^\pm,a^\pm}(\widetilde{U}^\pm_N) - \mathcal{N}_{-a^\pm,a^\pm}(\widetilde{U}_N)=o(N). $$
\end{lemme}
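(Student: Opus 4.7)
The plan is to interpolate between $\widetilde U_N$ and $\widetilde U_N^\pm$ by a unitary homotopy and track the change of the resonance count in $[-a^\pm,a^\pm]$ along it. Writing $P_N^\pm$ for the orthogonal projection onto $V_N^\pm$, I would set, for $s\in[0,1]$,
\[
\Sigma_N^{\pm,s} := (\mathrm{Id}-P_N^\pm) + e^{3is\xi_N}P_N^\pm, \qquad W_s(x) := \Sigma_N^{\pm,s}\,\widetilde U_N(x),
\]
so that $W_0 = \widetilde U_N$, $W_1 = \widetilde U_N^\pm$, and each $W_s(x)$ is unitary for $x\in\R$. Since $\Sigma_N^{\pm,s}$ is unitary, the self-adjoint operator $-i\,\frac{d}{dx}W_s(x)\,W_s(x)^{-1}$ is a conjugate of $-i\,\frac{d}{dx}\widetilde U_N(x)\,\widetilde U_N(x)^{-1}$ by $\Sigma_N^{\pm,s}$, so the monotonicity hypothesis (\ref{eq:Positivite}) is inherited by $W_s$ uniformly in $s$. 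Hence the eigenvalue phases of $W_s(x)$ are strictly increasing in $x$, and the integer $\mathcal{N}(s) := \mathcal{N}_{-a^\pm,a^\pm}(W_s)$ is piecewise constant in $s$, changing only when $\det(\mathrm{Id}-W_s(x))$ vanishes at one of the two endpoints $x = \pm a^\pm$.

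The heart of the proof is then to bound the total multiplicity of such jump events at each endpoint. At $x=-a^\pm$, by the very definition of $V_N^\pm$ the projection $P_N^\pm$ commutes with $\widetilde U_N(-a^\pm)$, so $W_s(-a^\pm)$ is diagonal in $\{u_k(-a^\pm)\}$ with eigenvalues $e^{i\theta_k(-a^\pm)}$ for $k\notin\mathcal{K}_N(-a^\pm)$ (which stay at distance $>\xi_N$ from $1$) and $e^{i(\theta_k(-a^\pm)+3s\xi_N)}$ for $k\in\mathcal{K}_N(-a^\pm)$ (which sweep an arc of angular length $3\xi_N<2\pi$ on the unit circle, crossing $1$ at most once). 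Thus at most $|\mathcal{K}_N(-a^\pm)|$ events occur at $-a^\pm$. At $x=+a^\pm$ the commutation fails, but setting $\zeta := e^{3is\xi_N}$ one has
\[
\det\bigl(\mathrm{Id}-W_s(a^\pm)\bigr) = \det\bigl(\mathrm{Id}-\widetilde U_N(a^\pm) + (1-\zeta)\,P_N^\pm\,\widetilde U_N(a^\pm)\bigr),
\]
which, by the matrix determinant lemma (since $P_N^\pm\widetilde U_N(a^\pm)$ has rank at most $|\mathcal{K}_N(-a^\pm)|$), is a polynomial in $\zeta$ of degree at most $|\mathcal{K}_N(-a^\pm)|$; it therefore has at most that many zeros on the arc $\{e^{3is\xi_N}:s\in[0,1]\}$, and the sum of their orders bounds the total passing multiplicity.

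Combining the two endpoint estimates yields $|\mathcal{N}(1)-\mathcal{N}(0)| \leq 2|\mathcal{K}_N(-a^\pm)|$. Lemma \ref{lem:SmallTunnels3} applied at $-a^\pm$ (obtained either by running its proof with $z_0 = -a - 3\varepsilon/2 + i\varepsilon/4$, or equivalently from the mirror symmetry of eigenvalues implied by point~5 of Hypothesis \ref{HypRenforcee}, which gives $|\mathcal{K}_N(-a^\pm)| = |\mathcal{K}_N(a^\pm)|$) bounds the right-hand side by $O(N/|\log\xi_N|)$, which is $o(N)$ since $\xi_N\to 0$. The main obstacle is precisely the count at $x=+a^\pm$: a naive perturbative tracking of the $|\mathcal{K}_N|$ moving eigenvalues would allow each of them to hit $1$ an uncontrolled number of times, and it is the rank-based polynomial-degree argument above that gives the right bound. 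A minor technicality is to perform an arbitrarily small adjustment of $\xi_N$ so that $\mathcal{N}(0)$ and $\mathcal{N}(1)$ are unambiguously defined, namely that $\widetilde U_N$ and $\widetilde U_N^\pm$ have no eigenvalue equal to $1$ exactly at $\pm a^\pm$.
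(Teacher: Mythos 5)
Your argument is correct, but it takes a genuinely different route from the paper's. The paper computes both counts in closed form by a phase-winding argument: since each $\theta_k$ is strictly increasing by (\ref{eq:Positivite}), and the similarity $\widetilde U_N(-a)\sim\widetilde U_N^*(a)$ forces $\theta_k(a)=-\theta_k(-a)+2n_k\pi$, the winding number $n_k$ (hence $\mathcal{N}_k(-a,a)$) is expressed through $\theta_k(-a)$ and the integral of $\mathrm{Tr}\big[\frac{\mathrm{d}\widetilde U_N}{\mathrm{d}x}\widetilde U_N^{-1}\big]$; that integral is unchanged by $\Sigma_N^\pm$, and the boundary data at $-a^\pm$ shift only by $O(N/|\log\xi_N|)$ terms. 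Your homotopy/spectral-flow argument replaces this explicit formula by a bound on the flow through the two endpoints as $s$ runs from $0$ to $1$; the genuinely new ingredient is the rank/polynomial-degree bound at $x=+a^\pm$, where $\Sigma_N^{\pm}$ does not commute with $\widetilde U_N(a^\pm)$ — a point the paper never has to confront, because the reflection symmetry carries all the needed information from $-a^\pm$ to $+a^\pm$. Both proofs rest on the same three pillars: monotonicity of the phases, invariance of the total winding under the perturbation (implicit in your piecewise-constancy claim, since $\det W_s(x)=e^{3is\xi_N\dim V_N^\pm}\det\widetilde U_N(x)$), and smallness of $|\mathcal{K}_N(-a^\pm)|$. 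Two small points. First, re-running the proof of Lemma \ref{lem:SmallTunnels3} with $z_0=-a-\frac{3\varepsilon}{2}+i\frac{\varepsilon}{4}$ would only produce the bound at \emph{some} point of $[-a-2\varepsilon,-a-\varepsilon]$, not at $-a^\pm$ itself, so of your two suggested routes only the mirror symmetry from point 5 of Hypothesis \ref{HypRenforcee} actually yields $|\mathcal{K}_N(-a^\pm)|=O(N/|\log\xi_N|)$ (the paper needs this observation too, as $V_N^\pm$ is built from $\mathcal{K}_N(-a^\pm)$ while the lemma bounds $|\mathcal{K}_N(a^\pm)|$). Second, your "minor technicality" at the end is doing slightly more work than you advertise: besides making $\mathcal{N}(0)$ and $\mathcal{N}(1)$ well defined, the condition $\det(\mathrm{Id}-\widetilde U_N(a^\pm))\neq 0$ is what guarantees that your polynomial in $\zeta$ is not identically zero, without which the degree bound at $+a^\pm$ would be vacuous.
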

\end{tcolorbox}

\begin{tcolorbox}
\begin{lemme}\label{lem:ContourBorne}
We may find $c^\pm>0$ independent of $N$ such that 
$$ \mathcal{N}_{-a^\pm, a^\pm, -c^\pm \xi_N, 0} (U_N) - \mathcal{N}_{-a^\pm,a^\pm}(\widetilde{U}^\pm_N) = o(N).$$
\end{lemme}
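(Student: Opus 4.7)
The plan is to apply the argument principle on a thin rectangular contour around $[-a^\pm, a^\pm]$ and exploit the multiplicative factorization
\begin{equation*}
\det(\mathrm{Id} - U_N(z)) = \det(\mathrm{Id} - \widetilde{U}^\pm_N(z)) \cdot \det(\mathrm{Id} - K^\pm_N(z)),
\end{equation*}
with $K^\pm_N(z) := (\mathrm{Id} - \widetilde{U}^\pm_N(z))^{-1}(U_N(z) - \widetilde{U}^\pm_N(z))$, valid where $\mathrm{Id} - \widetilde{U}^\pm_N(z)$ is invertible. Write $f_N := \det(\mathrm{Id} - U_N)$ and $g^\pm_N := \det(\mathrm{Id} - \widetilde{U}^\pm_N)$. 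Once the two counts in the statement are related to zero-counts of $f_N$ and $g^\pm_N$ inside a suitable rectangle $R_N$, their difference equals the net count of zeros minus poles of the meromorphic ratio $f_N/g^\pm_N = \det(\mathrm{Id} - K^\pm_N)$ inside $R_N$, which I bound by $o(N)$ via Jensen's formula.

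\textbf{Construction of the contour.} I choose a constant $c^\pm > 0$, depending only on the constants in the hypotheses, small enough that on the two vertical segments $\{z : \Re z = \pm a^\pm, \ |\Im z| \leq c^\pm \xi_N\}$ both $\mathrm{Id} - \widetilde{U}^\pm_N(z)$ and $\mathrm{Id} - U_N(z)$ are invertible, with resolvent bound $\|(\mathrm{Id} - \widetilde{U}^\pm_N(z))^{-1}\| \leq C/\xi_N$. At $z = -a^\pm$, the spectral gap (\ref{eq:SpectralGap}) together with the unitarity of $\widetilde{U}^\pm_N(-a^\pm)$ supplies this bound; the derivative bound (\ref{eq:BorneGlobaleDerivU}) then propagates it to complex $z$ on the vertical segment via a Neumann-series argument. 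The symmetry in point~5 of Hypothesis \ref{HypRenforcee} transfers the gap from $-a^\pm$ to $+a^\pm$ (possibly after a symmetric modification of $\Sigma^\pm_N$), and the analogous bound on $\mathrm{Id} - U_N(z)$ follows from $\|U_N - \widetilde{U}^\pm_N\|$ being much smaller than $\xi_N$. Setting $R_N := [-a^\pm, a^\pm] \times [-c^\pm \xi_N, c^\pm \xi_N]$ and $\Gamma_N := \partial R_N$, a harmless small perturbation of the horizontal sides ensures that neither $f_N$ nor $g^\pm_N$ vanishes on $\Gamma_N$.

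\textbf{Argument principle and Jensen bound.} The argument principle gives
\begin{equation*}
\mathcal{N}_{R_N}(U_N) - \mathcal{N}_{R_N}(\widetilde{U}^\pm_N) = \frac{1}{2\pi i} \oint_{\Gamma_N} \frac{d}{dz} \log \det(\mathrm{Id} - K^\pm_N(z))\, dz.
\end{equation*}
Since $U_N$ has no resonances in the upper half plane, $\mathcal{N}_{R_N}(U_N) = \mathcal{N}_{-a^\pm, a^\pm, -c^\pm \xi_N, 0}(U_N)$. On the other hand, because $\widetilde{U}_N$ has all its zeros on $\mathbb{R}$ and $\widetilde{U}^\pm_N$ is an $O(\xi_N)$-perturbation, applying Rouché in small disks around each real zero of $\widetilde{U}_N$ shows that all zeros of $g^\pm_N$ inside $R_N$ cluster near the real zeros of $\widetilde{U}_N$, with total multiplicity equal to $\mathcal{N}_{-a^\pm, a^\pm}(\widetilde{U}_N)$; combined with Lemma \ref{lem:MemeNombreVP}, this yields $\mathcal{N}_{R_N}(\widetilde{U}^\pm_N) = \mathcal{N}_{-a^\pm, a^\pm}(\widetilde{U}^\pm_N) + o(N)$. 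To bound the right-hand side, I use the trace-norm estimate
\begin{equation*}
\|K^\pm_N(z)\|_1 \leq \|(\mathrm{Id} - \widetilde{U}^\pm_N(z))^{-1}\| \cdot \|U_N(z) - \widetilde{U}^\pm_N(z)\|_1
\end{equation*}
together with $\|U_N - \widetilde{U}^\pm_N\|_1 = O(\gamma_N + N\xi_N/|\log \xi_N|)$ coming from (\ref{eq:TaillePerturb}) and Hypothesis \ref{Hyp:TildeU}, and the identity $(\mathrm{Id} - K^\pm_N)^{-1} = \mathrm{Id} - (\mathrm{Id} - U_N)^{-1}(\widetilde{U}^\pm_N - U_N)$ (as in (\ref{eq:InverseK2})) at a reference point $z_0$ with $|\Im z_0|$ of order $c^\pm \xi_N$ to obtain a matching lower bound via Hypothesis \ref{Hyp:GenRes}. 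A disk-covering argument modeled on the proof of Theorem \ref{th:FewRes2}, with disks of radius $\sim \xi_N$ centered slightly above or below the real axis so as to keep $(\mathrm{Id} - \widetilde{U}^\pm_N)^{-1}$ of controlled size, then produces the desired $o(N)$ bound, provided $\xi_N$ is chosen such that $\gamma_N/\xi_N \to 0$ and $|\log \xi_N| \to \infty$.

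\textbf{Main obstacle.} The delicate step is the final Jensen-covering estimate: the resolvent on $\widetilde{U}^\pm_N$ blows up like $1/\xi_N$ near the real axis while the trace perturbation is of size $N\xi_N/|\log \xi_N|$, so the best per-disk bound on $\log|\det(\mathrm{Id} - K^\pm_N)|$ is of order $N/|\log \xi_N|$, and this must survive the factor $1/\xi_N$ introduced when covering the thin rectangle $R_N$ by disks. The logarithmic gain in the size of $|\mathcal{K}_N(\pm a^\pm)|$ from Lemma \ref{lem:SmallTunnels3}, which is precisely what produces the denominator $|\log \xi_N|$ in (\ref{eq:TaillePerturb}), is what makes the overall balance possible, after calibrating the sequence $\xi_N$; transferring the spectral gap (\ref{eq:SpectralGap}) from the single real point $-a^\pm$ to the entire vertical contour and to $+a^\pm$ is a secondary technicality relying on (\ref{eq:BorneGlobaleDerivU}) and the symmetry of Hypothesis \ref{HypRenforcee}.
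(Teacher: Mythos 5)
Your skeleton matches the paper's: the same factorization $\det(\mathrm{Id}-U_N)=\det(\mathrm{Id}-\widetilde U_N^{\pm})\det(\mathrm{Id}-K_N^{\pm})$, the same thin rectangle through $\pm a^{\pm}$ where the spectral gap (\ref{eq:SpectralGap}) controls $(\mathrm{Id}-\widetilde U_N^{\pm})^{-1}$, and the argument principle. But the decisive step — showing that $\frac{1}{2\pi i}\oint_{\Gamma_N}\partial_z\log\det(\mathrm{Id}-K_N^{\pm})\,\dd z=o(N)$ — is not actually carried out, and the tools you invoke cannot do it. Jensen's formula bounds the number of zeros of a \emph{holomorphic} function in a disk, and the passage from $\int_0^r n(t)t^{-1}\dd t$ to $n(r)$ in (\ref{eq:Jensen10}) uses that $n$ is nondecreasing. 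Here $\det(\mathrm{Id}-K_N^{\pm})=f_N/g_N^{\pm}$ is meromorphic in $R_N$ with $\Theta(N)$ poles on the real segment and $\Theta(N)$ zeros just below it; the quantity to be bounded is the \emph{difference} of two counting functions, which is not monotone, so no Jensen/disk-covering argument à la Theorem \ref{th:FewRes2} can capture the cancellation. (Quantitatively: covering $[-a^{\pm},a^{\pm}]$ requires $\sim\xi_N^{-1}$ disks of radius $\xi_N$, and each Jensen application costs at least $O(\zeta_N)$, giving $\zeta_N/\xi_N\gg N$.) Equally, sup-bounds on $|\det(\mathrm{Id}-K_N^{\pm})|$ along $\Gamma_N$ do not control $\oint\partial_z\log\det(\mathrm{Id}-K_N^{\pm})$: the logarithmic derivative can be enormous near a zero sitting just off the contour. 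What is actually needed — and what the paper does — is to (i) show that in a \emph{frame} $W$ around $\partial R_N$ (two squares at $\pm a^{\pm}$ where the spectral gap makes $D_N^{\pm}$ holomorphic, plus strips above and below the axis) the number of zeros of $D_N^{\pm}$ is $o(N)$, (ii) divide out those zeros by the polynomial $\mathcal{B}_N^{\pm}$ and bound $|\partial_z\log(D_N^{\pm}/\mathcal{B}_N^{\pm})|$ \emph{pointwise} on the contour via Harnack's inequality and elliptic gradient estimates (Lemma \ref{Lem:BorneDerivG}), and (iii) handle $\oint\partial_z\log\mathcal{B}_N^{\pm}$ by the residue theorem. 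Your proposal contains a version of (i) but neither (ii) nor (iii), and without them the contour integral is unbounded as far as the stated estimates go.

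Two secondary points. First, your claim that the resolvent bound transfers from $\widetilde U_N^{\pm}$ to $U_N$ on the vertical segments because "$\|U_N-\widetilde U_N^{\pm}\|$ is much smaller than $\xi_N$" is false: Hypothesis \ref{Hyp:TildeU} only controls the \emph{trace} norm by $C\gamma_N$ (and for quantum graphs the operator norm of $S_N-\widetilde S_N$ is of order $1$), so $\|U_N-\widetilde U_N^{\pm}\|$ need not be $o(\xi_N)$. The paper never needs resolvent bounds for $U_N$ on the contour itself; it only uses them at well-chosen reference points (via points 2 and 3 of Hypothesis \ref{Hyp:GenRes}) and perturbs the contour so that it avoids resonances. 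Second, the Rouché argument for locating the zeros of $g_N^{\pm}$ is unnecessary and delicate (consecutive zeros are $O(1/N)$ apart): since $\widetilde U_N^{\pm}(x)=\Sigma_N^{\pm}\widetilde U_N(x)$ is unitary for real $x$ and strictly contracting (resp.\ expanding) off the axis by (\ref{eq:BorneSurLesNormes}), all zeros of $g_N^{\pm}$ are real, so $\mathcal{N}_{R_N}(\widetilde U_N^{\pm})=\mathcal{N}_{-a^{\pm},a^{\pm}}(\widetilde U_N^{\pm})$ exactly.
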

\end{tcolorbox}

Indeed, we have 
\begin{align*}
\left|\mathcal{N}_{-a^\pm, a^\pm, -c, 0}(U_N) - \mathcal{N}_{-a^\pm, a^\pm}(\widetilde{U}_N) \right|
&\leq \left| \mathcal{N}_{-a^\pm, a^\pm, -c, 0}(U_N) -  \mathcal{N}_{-a^\pm, a^\pm, -c^\pm \xi_N, 0}(U_N)\right| \\
&+ \left| \mathcal{N}_{-a^\pm, a^\pm, -c^\pm \xi_N, 0} (U_N) - \mathcal{N}_{-a^\pm,a^\pm}(\widetilde{U}^\pm_N) \right|\\
&+ \left|\mathcal{N}_{-a^\pm,a^\pm}(\widetilde{U}_N)- \mathcal{N}_{-a^\pm,a^\pm}(\widetilde{U}^\pm_N)) \right|
\end{align*}

The first term is smaller than  $\mathcal{N}_{-a^\pm, a^\pm, -c, -\frac{c^\pm}{2} \xi_N}(U_N)$, which is a $o(N)$ thanks to Theorem \ref{th:FewRes2}. The second term is a $o(N)$ thanks to Lemma \ref{lem:ContourBorne}, while the last term is a $o(N)$ thanks to Lemma \ref{lem:MemeNombreVP}. Proposition \ref{Prop:ResClose} follows.

Let us start with the proof of Lemma \ref{lem:MemeNombreVP}.

\begin{proof}[Proof of Lemma \ref{lem:MemeNombreVP}]
For each $1\leq k \leq d_N$, we denote by $\mathcal{N}_k(a_1,a_2)$ (resp. $\mathcal{N}_k^\pm(a_1,a_2)$) the number of zeroes of $e^{i\theta_k(x)}-1$ (resp. $e^{i\theta_k^\pm(x)} -1$) for $x\in (a_1,a_2)$.

To lighten notations, in all the proof, we will write $a$ instead of $a^\pm$.

By assumption, $\widetilde{U}_N(-a)$ and $\left(\widetilde{U}_N(a)\right)^*$ are \textcolor{black}{similar}, so, for each $k$, there exists $n_k\in \Z$ such that
%Note that 
%$$\tilde{U}(-a) = \tilde{S} e^{- i a L} = e^{i a L} \left( e^{- i a L} \tilde{S} \right) e^{- i a L} = e^{i a L} \left( \tilde{U}(a) \right)^*  e^{- i a^\pm L}.$$
%Hence, $\tilde{U}(-a)$ and $\left(\tilde{U}(a)\right)^*$ are conjugate, so, for each $k$, there exists $n^\pm_k\in \Z$ such that
$$\theta_k (a) = - \theta_k(-a) + 2 n_k\pi.$$

Without loss of generality, we will suppose that $\theta_k(-a) \in [0,2\pi[$. 
Differentiating the relation $\langle u_k, \widetilde{U}_N(x) u_k(x) \rangle = e^{i\theta_k(x)}$, we obtain 
\begin{equation*}
i \theta_k'(x) e^{i\theta_k(x)} = \langle u_k(x), \frac{\mathrm{d} \widetilde{U}_N(x)}{\mathrm{d}x} u_k(x) \rangle.
\end{equation*}

In other words, we have
\begin{equation*}
\theta_k'(x) = -i  \langle u_k(x), \frac{\mathrm{d} \widetilde{U}_N(x)}{\mathrm{d}x} \widetilde{U}_N(x) u_k(x) \rangle.
\end{equation*}

In particular, thanks to (\ref{eq:Positivite}), we see that this quantity is positive.

Since $\theta_k(x)$ is an increasing function, we see that
$$\mathcal{N}_k(-a, a)= \begin{cases}
n_k &\text{ if } \theta_k(-a)\in [0,\pi[\\
n_k +1 &\text{ if } \theta_k(-a)\in [\pi,2\pi[.
\end{cases}
$$

On the other hand, we have
$$n_k = \frac{1}{2\pi} \left(\theta_k (a) + \theta_k(-a)\right) = \frac{\theta_k(-a)}{\pi} + \frac{1}{2\pi} \int_{-a}^a \frac{\mathrm{d}\theta_k(x)}{\mathrm{d}x} \mathrm{d}x.$$

All in all, we have
\begin{align*}
\mathcal{N}_{-a,a}(\widetilde{U}_N) &= \frac{1}{\pi} \sum_k \theta_k(-a)  + \frac{1}{2\pi} \int_{-a}^a \sum_k \frac{\mathrm{d}\theta_k(x)}{\mathrm{d}x} \mathrm{d}x + \sharp \left\{ k \text{ such that }  \theta_k(-a)\in [\pi,2\pi[ \right\}\\
&=  \frac{1}{\pi} \sum_k \theta_k(-a)  - \frac{i}{2\pi} \int_{-a}^a \mathrm{Tr}\left[\frac{\mathrm{d}\widetilde{U}_N(x)}{\mathrm{d} x} \widetilde{U}_N^{-1}(x) \right] \mathrm{d}x + \sharp \left\{ k \text{ such that }  \theta_k(-a)\in [\pi,2\pi[ \right\},
\end{align*}

Recall that
\begin{equation}\label{eq:ThetaPrime}
\theta_k^\pm(-a) = \begin{cases}
\theta_k(-a) \text{ if } |e^{i\theta_k(-a)}-1|> \xi_n\\
\theta_k(-a)+ 3\xi_n \text{ if } |e^{i\theta_k(-a)}-1|\leq \xi_n.
\end{cases}
\end{equation}

Noting that $ \frac{\mathrm{d}\widetilde{U}_N^\pm(x)}{\mathrm{d} x} \left(\widetilde{U}_N^\pm(x)\right)^{-1}  = \frac{\mathrm{d}\widetilde{U}_N(x)}{\mathrm{d} x} \widetilde{U}_N^{-1}(x) $, the same argument as above tells us that 
\begin{align*}
\mathcal{N}_{-a,a}(\widetilde{U}^\pm_N)
&=  \frac{1}{\pi} \sum_k \theta^\pm_k(-a)  - \frac{i}{2\pi} \int_{-a}^a \mathrm{Tr}\left[\frac{\mathrm{d}\widetilde{U}_N(x)}{\mathrm{d} x} \widetilde{U}_N^{-1}(x) \right] \mathrm{d}x  + \sharp \left\{ k \text{ such that } \textcolor{black}{ \left(\theta^\pm_k(-a) ~\mathrm{mod} ~ \pi\right)} \in [\pi,2\pi[ \right\}.
\end{align*}

Now, from (\ref{eq:ThetaPrime}) and (\ref{eq:Tunnels10}), we see that 
$$\sharp \left\{ k \text{ such that }   \textcolor{black}{ \left(\theta^\pm_k(-a) ~\mathrm{mod} ~ \pi\right)} \in [\pi,2\pi[ \right\}= \sharp \left\{ k \text{ such that }  \theta_k(-a)\in [\pi,2\pi[ \right\} + O \left(\frac{N}{|\log \xi_N|}\right),$$
 while $\sum_k \theta^\pm_k(-a) = \sum_k \theta_k(-a) + O\left(\frac{N \xi_N}{|\log \xi_N|}\right)$. The result follows.
\end{proof}

We may now proceed with the proof of Lemma \ref{lem:ContourBorne}.

\begin{proof}[Proof of Lemma \ref{lem:ContourBorne}]

We will only bound the quantity 
$$\left| \mathcal{N}_{-a^+, a^+, -c^+ \xi_N, 0} (U_N) -\mathcal{N}_{-a^+,a^+}(\widetilde{U}^+_N)\right|,$$
 since the proof for $\left| \mathcal{N}_{-a^-, a^-, -c^- \xi_N, 0} (U_N) -\mathcal{N}_{-a^-,a^-}(\widetilde{U}^-_N)\right|$  is exactly the same.

We know from (\ref{eq:SpectralGap}) that
\begin{equation}\label{eq:TildeUSansVP50}
\left\| \left( \mathrm{Id}- \widetilde{U}_N^+(\pm a^+)\right)^{-1} \right\| \leq \frac{1}{\xi_N}.
\end{equation}

Let $\mathbf{z}\in \C$ with $|\mathbf{z}|\leq 1$.
We have
\begin{equation}\label{eq:ThroughReal20}
\mathrm{Id} - \widetilde{U}_N(\pm a^+ + \mathbf{z}) = \left(\mathrm{Id} - \widetilde{U}_N(\pm a^+)\right) \left[ \mathrm{Id} + \left(\mathrm{Id} - \widetilde{U}_N(\pm a^+)\right)^{-1} \left(\textcolor{black}{\widetilde{U}_N(\pm a^+)} - \widetilde{U}_N(\pm a^+ + \mathbf{z}) \right)\right].
\end{equation}

Note that by (\ref{eq:BorneGlobaleDerivU}), there exists $\mathrm{C}_1'$ such that
$$\left\|\widetilde{U}_N(\pm a^+ + \mathbf{z}) - \widetilde{U}_N(\pm a^+) \right\| \leq \mathrm{C}'_{1}|\mathbf{z}|.$$
Therefore, thanks to (\ref{eq:TildeUSansVP50}), if $|\mathbf{z}|\leq \frac{\xi_N}{2 \mathrm{C}'_1}$,  we have 
$$ \left\| \left(\mathrm{Id} - \widetilde{U}_N(\pm a^+)\right)^{-1} \left(\textcolor{black}{\widetilde{U}_N(\pm a^+)} - \widetilde{U}_N(\pm a^+ + \mathbf{z}) \right)\right\| \leq \frac{1}{2}.$$
 Hence, we deduce from (\ref{eq:ThroughReal20}) that 
$$ \left\| \left[\mathrm{Id} - \widetilde{U}_N(\pm a^+ + \mathbf{z}) \right]^{-1} \right\| \leq \frac{2}{\xi_N}.$$

All in all, we may find $c'_2>0$ such that $\det \left( \widetilde{U}_N^+(z) - \mathrm{Id} \right)$ does not vanish in $B\left(\pm a^+ + i \frac{c'_2}{8} \xi_N, \frac{c'_2}{2} \xi_N \right)$, and that we have
\begin{equation}\label{eq:BorneInverse10}
\forall z \in B\left(\pm a^+ + i \frac{c'_2}{8} \xi_N, \frac{c'_2}{2} \xi_N \right), ~~ \left\|\left(\widetilde{U}_N^+(z) - \mathrm{Id}\right)^{-1}\right\| \leq \frac{2}{\xi_N}.
\end{equation}

On the other hand, thanks to (\ref{eq:DefDiffTrace}) and (\ref{eq:TaillePerturb}), we have for all $z$ in a compact set,
\begin{equation}\label{eq:TraceTilde10}
  \left\| U_N(z) - \widetilde{U}_N^+(z) \right\|_1 \leq   \left\| U_N(z) - \widetilde{U}_{\textcolor{black}{N}}(z) \right\|_1 +   \left\| \widetilde{U}_N(z) - \widetilde{U}_N^+(z) \right\|_1 \leq   C \zeta_N \xi_N,
\end{equation}
where we write 
$$\zeta_N := \frac{N}{|\log \xi_N|} +  \frac{\gamma_N}{\xi_N}= o(N).$$

Let us set
\begin{align*}
K_N^+(z)&:= (\mathrm{Id}-\widetilde{U}_N^+(z))^{-1} \textcolor{black}{\left(U_N(z) - \widetilde{U}_N^+(z)\right)}\\
D_N^+(z) &:= \det (\mathrm{Id} - K_N^+(z))
\end{align*}
so that
\begin{equation*}
z \in \C\backslash\R \text{ is a resonance of $U_N$} \Longleftrightarrow D_N^+(z) = 0.
\end{equation*}

Thanks to equations (\ref{eq:PropSchatten}), (\ref{eq:PropSchatten1}) and (\ref{eq:TraceTilde10}), we have, for all $z$ in a compact set
\begin{equation}\label{eq:BorneDet20}
\begin{aligned}
 |D_N^+(z)| &\leq e^{\|K_N^+(z)\|_1}\\
& \leq \exp\left[ \left\|(\mathrm{Id}-\widetilde{U}_N^+(z))^{-1}\right\| \left\|\widetilde{U}_N^+(z)- U_N(z)\right\|_1 \right] \\
 &\leq  \exp\left[ C \zeta_N \xi_N  \left\|(\mathrm{Id}-\widetilde{U}_N^+(z))^{-1}\right\| \right]. 
\end{aligned}
\end{equation}

In particular,
\begin{itemize} 
\item \textcolor{black}{If $\Im z \neq 0$, we have $\left\|(\mathrm{Id}-\widetilde{U}_N^+(z))^{-1}\right\| = \left\|\left((\Sigma_N^+)^{-1}-\widetilde{U}_N(z)\right)^{-1}\right\| = O (|\Im z|^{-1})$ thanks to  (\ref{eq:BorneSurLesNormes}) and to the fact that $\Sigma_N^+$ is an isometry. Therefore, we have
\begin{equation}\label{eq:BorneDet22}
\begin{aligned}
 \log |D_N^+(z)|= O(\zeta_N \xi_N |\Im z|^{-1}). 
\end{aligned}
\end{equation}}

\item If  $z\in \textcolor{black}{D}\left(\pm a^+ + i \frac{c'_2}{8} \xi_N, \frac{c'_2}{2} \xi_N \right)$, we see from \textcolor{black}{(\ref{eq:BorneInverse10})} that
\begin{equation}\label{eq:BorneDet21}
\begin{aligned}
 \log |D^+_N(z)|= O(\zeta_N). 
\end{aligned}
\end{equation}
\end{itemize}

On the other hand, if $z$ is such that $\mathrm{Id}- U_N(z)$ is invertible, then we have
\begin{equation*}
\left(\mathrm{Id} - K_N^{\textcolor{black}{+}}\left( z\right)\right)^{-1} = \mathrm{Id} - \left(\mathrm{Id}-U_N\left(z\right)\right)^{-1}  \textcolor{black}{\left(\widetilde{U}^{+}_N\left(z\right) - U_N\left(z\right)\right)},
\end{equation*}
so that 
\begin{equation}\label{eq:BorneDetInv21}
\begin{aligned}
\frac{1}{\left|D_N^+(z)\right|} &\leq \exp\left[\left\|\left(\mathrm{Id}-U_N\left(z \right)\right)^{-1}\right\| \left\|\widetilde{U}^{\textcolor{black}{+}}_N\left(z\right)- U_N\left(z\right)\right\|_1 \right]\\
&\leq \exp\left[ C \left\|\left(\mathrm{Id}-U_N\left(z \right)\right)^{-1}\right\|   \xi_N\zeta_N \right].
\end{aligned}
\end{equation}

An application of Jensen's formula (\ref{eq:Jensen10}) along with (\ref{eq:BorneDet21}), (\ref{eq:BorneDetInv21}) and (\ref{eq:HypResolv0})  tells us that
\begin{equation}\label{eq:FewResSquares10}
\mathcal{N}_{\textcolor{black}{D}\left(\pm a^+ + i \frac{c'_2}{8} \xi_N, \frac{c'_2}{4} \xi_N \right)} (U_N) \leq C \zeta_N
\end{equation}

We write 
$$\mathcal{S}_{z, r} := \mathcal{R}_{\Re z - r, \Re z + r, \Im z - r, \Im z + r}$$

Let $c'_3$ be such that 
$$\mathcal{S}_{a^+ + i \frac{c'_3}{4} \xi_N ,2c'_3 \xi_N} \subset \textcolor{black}{D}\left(a^+ + i \frac{c'_2}{8} \xi_N, \frac{c'_2}{4} \xi_N \right).$$

\textcolor{black}{Taking $Y_1$ as in Hypothesis \ref{Hyp:GenRes}, we define}
\begin{align*}
W^\pm &:= \mathcal{S}_{\pm a^+ + i \frac{c'_3}{4} \xi_N, 2c'_3 \xi_N} ~~~~ &W^\pm_1 &:= \mathcal{S}_{\pm a^+ + i \frac{c'_3}{4} \xi_N, \frac{c'_3}{2} \xi_N}\\
W^0 &:= \mathcal{R}_{- a^+ - c'_3\xi_N, a^+ +c'_3\xi_N, - 2\textcolor{black}{Y_1} + \frac{c'_3}{16}\xi_N, -\frac{c'_3}{16}\xi_N} ~~ &W^0_1 &:=\mathcal{R}_{- a^+ - \frac{c'_3}{2}\xi_N, a^+ +\frac{c'_3}{2}\xi_N, \textcolor{black}{-2Y_1 + \frac{c'_3}{8}\xi_N} , -\frac{c'_3}{8}\xi_N}\\
\hat{W}^0 &:= \mathcal{R}_{- a^+ - c'_3\xi_N, a^+ +c'_3\xi_N,  \frac{c'_3}{16}\xi_N, 2\textcolor{black}{Y_1}  - \frac{c'_3}{16}\xi_N} ~~ & \hat{W}^0_1 &:=\mathcal{R}_{- a^+ - \frac{c'_3}{2}\xi_N, a^+ +\frac{c'_3}{2}\xi_N, \frac{c'_3}{8}\xi_N, \textcolor{black}{2Y_1} - \frac{c'_3}{8}\xi_N }\\
W&:= \textcolor{black}{W^+ \cup W^- \cup W^0 \cup \hat{W}^0} ~~ &W_1&:= \textcolor{black}{W_1^+ \cup W_1^- \cup W_1^0 \cup \hat{W}_1^0}.
\end{align*}

Thanks to Hypothesis \ref{Hyp:GenRes}, we have $\mathcal{N}_{\textcolor{black}{\hat{W}^0}} (U_N) = 0$.
Equation (\ref{eq:FewResSquares10}) tells us that $\mathcal{N}_{W^\pm} (U_N) \leq C \zeta_N $, while Theorem \ref{th:FewRes2} implies that $\mathcal{N}_{\textcolor{black}{W^0}} (U_N) \leq C  \frac{\gamma_N}{\xi_N^{5/2}}$. All in all, we have
\begin{equation}\label{eq:ResW}
\mathcal{N}_W (U_N) \leq C \zeta_N + C  \frac{\gamma_N}{\xi_N^{5/2}}= o(N),
\end{equation}
provided we take $\xi_N$ going to zero slowly enough.

We denote by $(z_j)_{j=1,...,J_N} :=\mathrm{Res} (U_N)\cap W$, so that $J_N \leq C \zeta_N + C  \frac{C \gamma_N}{\xi_N^{5/2}}$. 

 We set
\begin{equation*}
\mathcal{B}^+_N(z):= \prod_{j\textcolor{black}{=}1}^{J_N} (z-z_j), ~~~~  G^+_N(z) :=  \frac{D_N^+(z)}{\mathcal{B}^+_N(z)}.
\end{equation*}

We are now going to apply the results of section \ref{sec:Max} to estimate $|\log G_N^+|$ in $W_{\textcolor{black}{1}}$. We will thus take $\mathcal{K}=W$, and apply \textcolor{black}{Lemma \ref{Lem:BorneDerivG}} in various rectangles.

\begin{itemize}
\item In $W_1^\pm$, we take $z_0 = \pm a^+ + i \frac{c'_3}{4} \xi_N$, $s=t= 2 c_3' \xi_N$, and $\alpha=\frac{1}{2}$, so that $W^\pm= \mathbf{R}_{z_0, s,t}$ and $W_1^\pm = \mathbf{R}_{z_0, \alpha^2 s, \alpha^2t}$.
\item  In $W_1^0$, we take $z_0 = - i \textcolor{black}{Y_1} $, $s= a^+ + c_3' \xi_N$, $t= \textcolor{black}{Y_1}  -\frac{c_3'}{16} \xi_N$, so that $W^0=\mathbf{R}_{z_0, s,t}$. Let $\alpha_1 = \left( \frac{a^+ + c'_3 \xi_N}{a^+ + \frac{c'_3}{2} \xi_N}\right)^{1/2}$, $\alpha_2 = \left( \frac{\textcolor{black}{Y_1}  - \frac{c'_3}{16} \xi_N}{\textcolor{black}{Y_1} - \frac{c'_3}{8} \xi_N}\right)^{1/2}$, and $\alpha = \min (\alpha_1, \alpha_2)$. This way, we have $0<\alpha<1$, and $1-\alpha = \Theta(\xi_N)$. Furthermore, we have $W_1^0\subset \mathbf{R}_{z_0, \alpha^2 s, \alpha^2t}$

\item In $\hat{W}_1^0$, take $\alpha$, $s$ and $t$ as above, with $z_0 = \textcolor{black}{i} (\textcolor{black}{Y_1} + \frac{c_3'}{16} \xi_N)$.
\end{itemize}

Recall that (\ref{eq:BorneDerivG}) tells us that, with $f=G_N^+$, we have  for all $z \in \mathbf{R}_{z_0, \alpha^2 s, \alpha^2t}$
$$\left| \partial_z \log G_N^+(z) \right| \leq \frac{C}{r_m(1-\alpha)}  \left(1+ \mathrm{Diam}(W) +  |\log (1-\alpha) r_m| \right) \left(\mathcal{N}_W (U_N)  + \log \sup_{z\in \mathbf{R}_{z_0, s,t}} |D_N^+(z)| + \left|\log |D_N^+(z_0)| \right| \right). $$

Here, $ \mathrm{Diam}(W)$ is bounded independently of $N$. In all the situations above, $(1-\alpha) r_M$  is either a $\Theta(\xi_N)$ or a $\Theta(\xi_N^2)$, and $\mathcal{N}_W (U_N)$ is bounded by (\ref{eq:ResW}). Concerning the last two terms:

\begin{itemize}
\item In $\textcolor{black}{W^\pm}$, $\left|\log |D_N^+(z_0)| \right|$ can be estimated by (\ref{eq:BorneDetInv21}) along with (\ref{eq:HypResolv0}), and we see that $\left|\log |D_N^+(z_0)|\right|= O(\zeta_N).$ The term $\log \sup_{z\in \mathbf{R}_{z_0, s,t}} |D_N^+(z)|$ is also a $O(\zeta_N)$ thanks to (\ref{eq:BorneDet21}).
\item In $\textcolor{black}{\hat{W}^0}$,  $\left|\log |D_N^+(z_0)| \right|$ can also estimated by (\ref{eq:BorneDetInv21}) along with (\ref{eq:HypResolv0}), and we see that $\left|\log |D_N^+(z_0)|\right| =O(\xi_N \zeta_N).$ As for $\log \sup_{z\in \mathbf{R}_{z_0, s,t}} |D_N^+(z)|$, it is a \textcolor{black}{$O(\zeta_N)$} thanks to (\ref{eq:BorneDet22}).
\item In $\textcolor{black}{W^0}$, $\log \sup_{z\in \mathbf{R}_{z_0, s,t}} |D_N^+(z)|$ is also a \textcolor{black}{$O(\zeta_N)$} thanks to (\ref{eq:BorneDet22}). As for $\left|\log |D_N^+(z_0)| \right|$, we estimate it with (\ref{eq:BorneDetInv21}) \textcolor{black}{along with the third point of Hypothesis \ref{Hyp:GenRes}} to obtain that it is a $O(\xi_N \zeta_N)$.
\end{itemize}

All in all, we obtain that, for all $z\in W$, we have
\begin{equation*}
\left| \partial_z \log G_N^+(z) \right| \leq \frac{C}{\xi_N^2} (1+ |\log(\xi_N)|)   \left(\zeta_N + \frac{\gamma_N}{\xi_N^{5/2}} \right). 
\end{equation*}

Recalling that $\zeta_N= \frac{N}{|\log \xi_N|} +  \frac{\gamma_N}{\xi_N}$ and that $\gamma_N = o(N)$, we see that, if we take $\xi_N$ going to zero slowly enough, we have 
\begin{equation}\label{eq:BretzelDort}
\sup_{z\in W} \left| \partial_z \log G_N^+(z) \right| = o(N).
\end{equation}

Now, for each $\eta\in [0,1/2]$, write
$$\Omega_\eta := \mathcal{R}_{-a^+, a^+, -c_3'\left( \frac{1 - \eta}{8}\right) \xi_N,  c_3'\left( \frac{1 - \eta}{8}\right) \xi_N},$$
so that $\partial \Omega_\eta \subset W$. Recalling (\ref{eq:NotARes}) and the fact that the resonances of $U_N$ are isolated, we can find, for each $N$, an $\eta_N\in [0, 1/2]$ such that $U_N$ has no resonances on $\partial \Omega_{\eta_N}$.

Therefore, we may apply the residue formula to obtain
\begin{equation*}
\begin{aligned}
\mathcal{N}_{\Omega_0} (U_N)\leq \mathcal{N}_{\Omega_{\eta_N}} (U_N) &= \frac{1}{2i\pi}\int_{\partial \Omega_{\eta_N}} \frac{1}{\det(\mathrm{Id}-U_N(z)) } \frac{\mathrm{d}}{\mathrm{d}z}\det(\mathrm{Id}-U_N(z))\mathrm{d}z\\
&=  \frac{1}{2i\pi}\int_{\partial \Omega_{\eta_N}}  \partial_z \left[\log\det(\mathrm{Id}-U_N(z))\right] \mathrm{d}z\\
&= \frac{1}{2i\pi}\int_{\partial \Omega_{\eta_N}}  \partial_z \left[\log\det(\mathrm{Id}-\widetilde{U}_N^+(z)) + \log\det(\mathrm{Id}- K_N^+(z)) \right] \mathrm{d}z\\
&=\mathcal{N}_{-a^+,a^+}(\widetilde{U}^+_N) + \frac{1}{2i\pi}\int_{\partial \Omega_{\eta_N}} \partial_z \log G^+_N(z) \mathrm{d}z + \frac{1}{2i\pi}\int_{\partial \Omega_{\eta_N}} \partial_z \log \mathcal{B}^+_N(z) \mathrm{d}z
\end{aligned}
\end{equation*}

Now, by (\ref{eq:BretzelDort}), we have
$$\left|\frac{1}{2i\pi}\int_{\partial \Omega_{\eta_N}} \partial_z \log G^+_N(z) \mathrm{d}z \right| =o(N).$$

On the other hand, $\log\mathcal{B}^+_N(z)= \sum_{j=1}^{J_N} \log (z-z_j)$, so, by the residue formula
$$\frac{1}{2i\pi}\int_{\partial \Omega_{\eta_N}} \partial_z \log \mathcal{B}^+_N(z) \mathrm{d}z = \mathcal{N}_{\Omega_{\eta_N}} (U_N) = o(N),$$
thanks to (\ref{eq:ResW}).

This shows that 
$$\left| \mathcal{N}_{\Omega_0} (U_N) -\mathcal{N}_{-a^+,a^+}(\widetilde{U}^+_N) \right| = o(N),$$
and the lemma follows.
\end{proof}

\appendix

\section{Irrational quantum graphs with real resonances}\label{app:Irr}

The simplest examples of open quantum graphs having resonances on the real axis are those whose compact part is just a segment, contains a loop (or a part that is equivalent to a loop, see the left side of Figure \ref{exampleTrivial}), or those where the lengths on a cycle are all rationally dependent (see the right part of Figure \ref{exampleTrivial}).

Apart from variants of these simple situations, it is not easy to build examples of quantum graphs having resonances on the real axis. Our aim here is to show that such graphs exist, thanks to the following proposition.

\begin{figure}[h]
    \begin{minipage}[c]{.46\linewidth}
        \centering
   \includegraphics[scale=0.3]{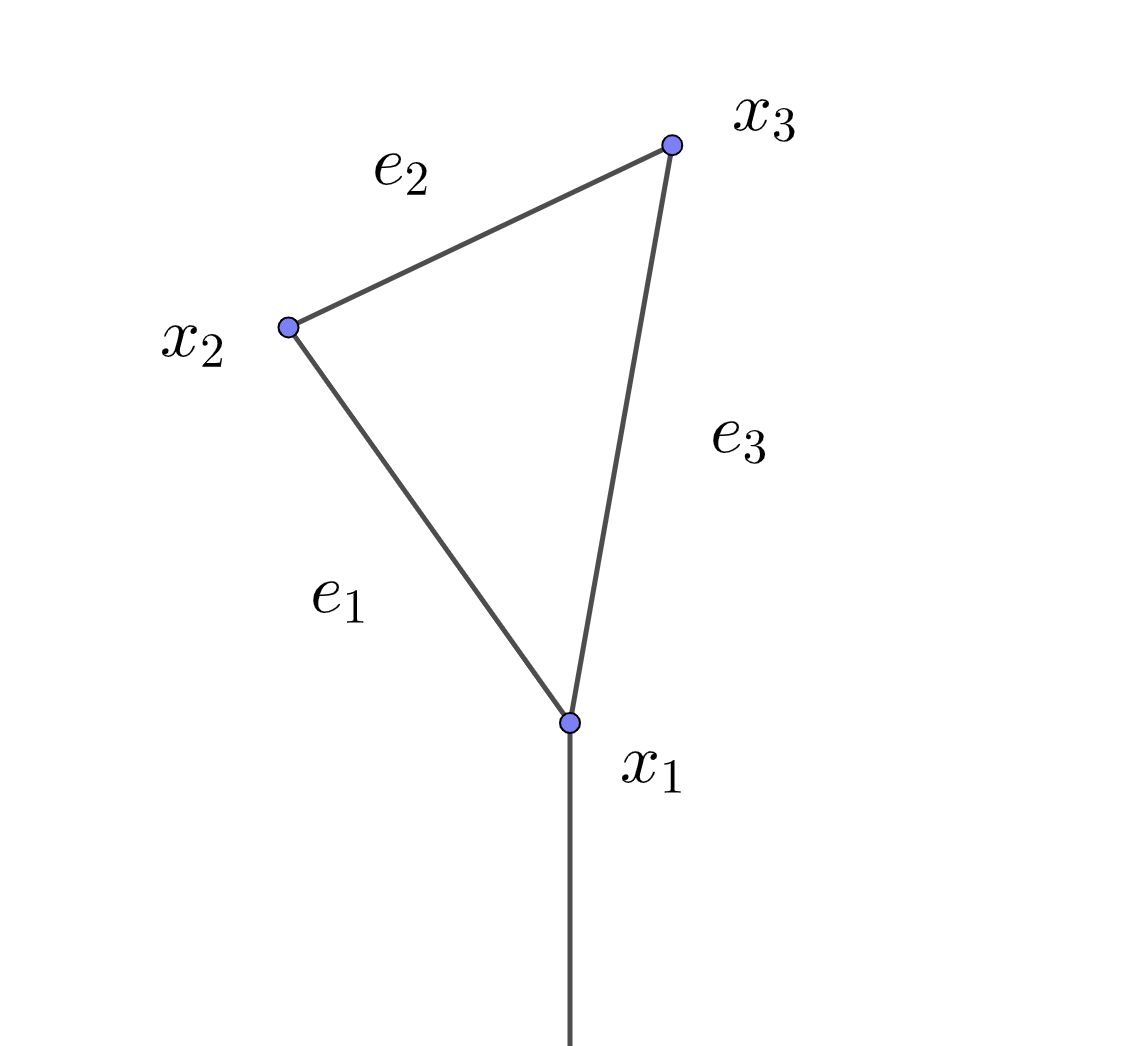}
    \end{minipage}
    \hfill%
    \begin{minipage}[c]{.46\linewidth}
        \centering
   \includegraphics[scale=0.3]{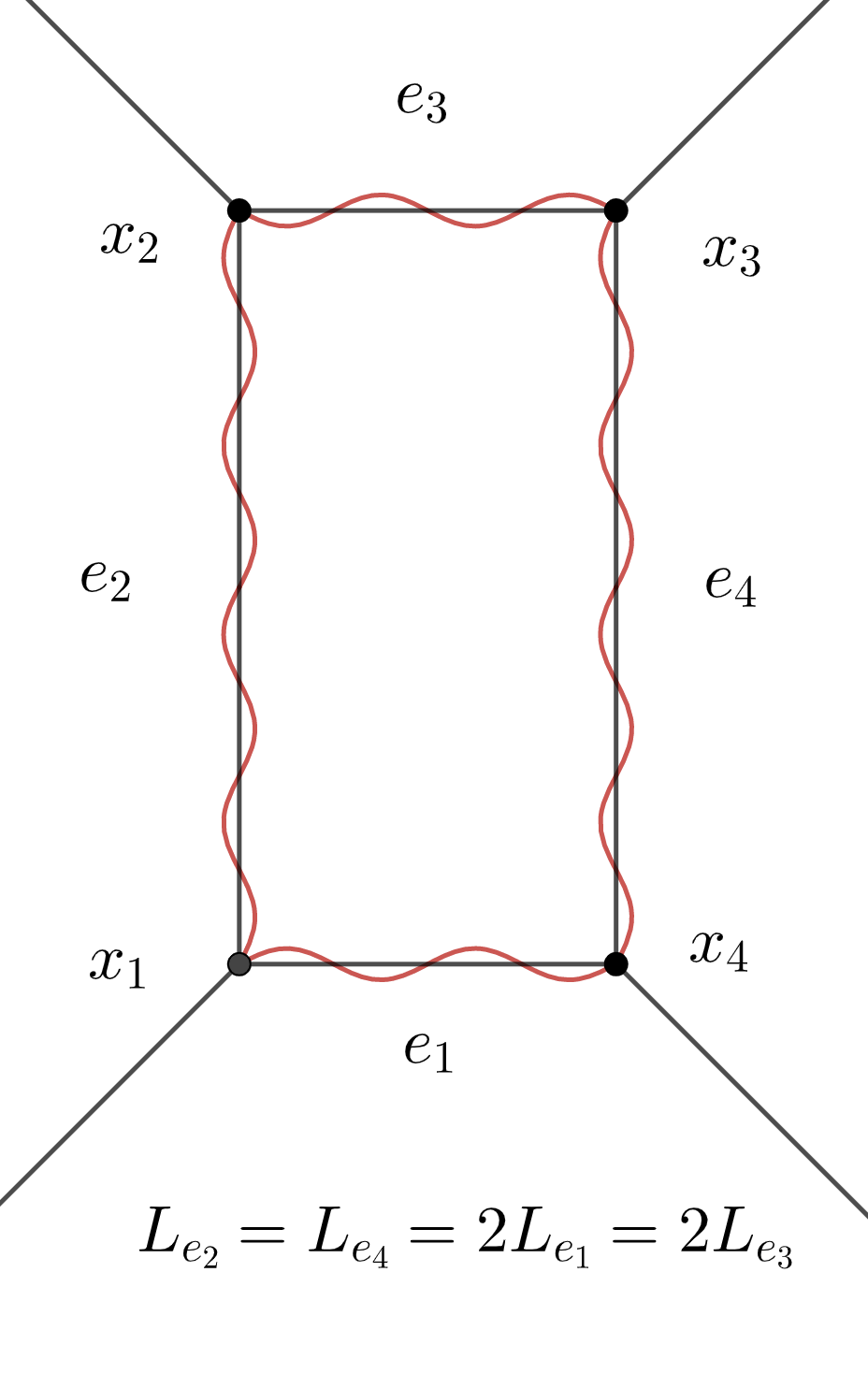}
    \end{minipage}
    \caption{Two  simple examples of quantum graphs having resonances on the real axis. on the left,  if $\lambda (L_{e_1} + L_{e_2} + L_{e_3}) \in \N \pi$, then we may build an eigenfunction with eigenvalue $\lambda^2$, vanishing at $x_1$, and on the lead connected to it. On the right, we may build an eigenfunction vanishing at every vertex, and on all the leads.} \label{exampleTrivial}
\end{figure}

\begin{tcolorbox}
\begin{proposition}
There exists a quantum graph $\mathcal{Q}=(V,E,L, \mathbf{n})$ such that
\begin{itemize}
\item There no loops, i.e., no edges joining a vertex to itself;
\item Each vertex has degree $\geq 3$;
\item The family $(L_e)_{e\in E}$ is rationally independent
\end{itemize}
and $\mathcal{Q}$ has a resonance on the real axis.
\end{proposition}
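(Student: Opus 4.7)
The plan is to combine an explicit symmetry-based construction of a real resonance with a dimension-counting and Baire-category perturbation argument that breaks the symmetry while preserving the resonance.

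\emph{A symmetric seed.} First I would construct a graph $G_0$ admitting a real resonance at a symmetric configuration of lengths. A natural choice is to identify one vertex of each of two copies of $K_4$ at a common vertex $u$ and attach a single lead at $u$. This graph has no loops and minimal internal degree $3$ (with $d(u)=6$ and $d(v)=3$ for every other vertex). The order-two graph automorphism $\iota$ swapping the two copies of $K_4$ fixes $u$ and the lead. Choose lengths $L_0$ invariant under $\iota$ (paired edges have equal lengths). Any eigenfunction of $(G_0, L_0)$ antisymmetric under $\iota$ necessarily vanishes at $u$, satisfies the outgoing condition with trivial amplitude on the lead, and restricts to a Dirichlet eigenfunction on one copy of $K_4$ with Dirichlet condition at $u$. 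This is a self-adjoint problem with discrete real spectrum, so any positive Dirichlet eigenvalue $z_0^2$ yields a real resonance $z_0$ of $(G_0, L_0)$.

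\emph{Real-resonance locus via the implicit function theorem.} Let $F(L,z) := \det(\mathrm{Id} - U_L(z))$, holomorphic on $\C^n \times \C$ where $n = |E|$. For a generic symmetric $L_0$ the zero $z_0$ of $F(L_0, \cdot)$ is simple, so the implicit function theorem produces a holomorphic function $z(L)$ defined near $L_0$ with $z(L_0) = z_0$ and $F(L, z(L)) = 0$. Real resonances near $z_0$ are parametrised by $V := \{L \in \R_+^n : \Im z(L) = 0\}$. The invariance $F(\iota^* L, z) = F(L, z)$ (since $\iota$ permutes the bonds, conjugating $U_L$) gives $z(\iota^* L) = z(L)$, so $z$ depends only on $\iota$-symmetric combinations of the lengths. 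Writing $\delta L = L - L_0 = \delta L^{\mathrm{s}} + \delta L^{\mathrm{a}}$ for the decomposition into $\iota$-symmetric and $\iota$-antisymmetric parts, $z$ is even in $\delta L^{\mathrm{a}}$ and stays real on the symmetric locus (where $z(L)$ remains a Dirichlet eigenvalue of the perturbed graph). Combining these facts yields the expansion
\begin{equation*}
\Im z(L_0 + \delta L) = Q(\delta L^{\mathrm{a}}) + O(|\delta L|^3),
\end{equation*}
where $Q$ is a real quadratic form on the antisymmetric subspace.

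\emph{Conclusion by Baire category.} Assume $Q$ is indefinite. Then near $L_0$, the variety $V$ is of real dimension $n-1$, conical along the antisymmetric directions. The set of rationally dependent length vectors is $\bigcup_{a \in \Z^n \setminus \{0\}} H_a$, a countable union of hyperplanes $H_a := \{L : a \cdot L = 0\}$. Each $H_a$ either contains $V$ or intersects it in a proper analytic subvariety of dimension at most $n-2$. Containment would force the tangent quadric $\{Q = 0\}$ in the antisymmetric subspace to lie in the linear hyperplane $\{a^{\mathrm{a}} \cdot \delta L^{\mathrm{a}} = 0\}$, which is impossible for an indefinite quadratic form. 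Therefore $V \setminus \bigcup_a H_a$ has full $(n-1)$-dimensional Hausdorff measure in $V$ and, in particular, is non-empty; any $L^* \in V \setminus \bigcup_a H_a$ yields a graph $(G_0, L^*)$ with the required rationally independent edge lengths and a real resonance.

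\emph{Main difficulty.} The crux is showing that the quadratic form $Q$ is indefinite for some valid $(G_0, L_0, z_0)$. A second-order perturbation computation expresses $Q$ in terms of the antisymmetric eigenfunction and matrix elements of the resolvent of the Laplacian on the symmetric "half-graph"; indefiniteness then reduces to a finite-dimensional algebraic condition. The argument is genuinely non-constructive at this step: instead of exhibiting a specific triple, one shows that the locus of $(L_0, z_0)$ within the symmetric configuration space for which $Q$ is semi-definite or identically zero is a proper real-analytic subvariety, so some admissible configuration must fall outside it. If this degeneracy were to persist in the chosen family of graphs $G_0$, one would enlarge $G_0$ (for example by attaching further pairs of $\iota$-swapped $K_4$-blocks) to introduce additional antisymmetric directions until indefiniteness of $Q$ becomes generic.
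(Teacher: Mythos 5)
Your construction of the symmetric seed is sound (and is close in spirit to the gluing trick the paper also uses to produce eigenfunctions vanishing at a vertex), but the core of your argument fails at the step you yourself flag as the crux: the quadratic form $Q$ can \emph{never} be indefinite. Every resonance of an (unbalanced) open quantum graph satisfies $\Im z \leq 0$, so the branch $z(L)$ produced by the implicit function theorem obeys $\Im z(L) \leq 0$ for \emph{all} $L$ near $L_0$, and the expansion $\Im z(L_0+\delta L)=Q(\delta L^{\mathrm a})+O(|\delta L|^3)$ forces $Q\leq 0$. This is precisely the content of the Fermi-golden-rule results of Exner--Lipovsk\'y and Lee--Zworski cited in the introduction: generically $Q<0$ on the antisymmetric directions, i.e.\ a small symmetry-breaking perturbation pushes the resonance strictly below the real axis. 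In that generic case, evenness of $z$ in $\delta L^{\mathrm a}$ gives $\Im z = Q_{L^{\mathrm s}}(\delta L^{\mathrm a})+O(|\delta L^{\mathrm a}|^4)$ with $Q_{L^{\mathrm s}}$ negative definite for nearby symmetric parts, so the real-resonance locus $V$ collapses onto the symmetric locus $\{\delta L^{\mathrm a}=0\}$ --- on which the lengths are automatically rationally dependent ($L_e=L_{\iota e}$). Your Baire-category step, which needs $V$ to be a cone transverse to every rational hyperplane, therefore has nothing to work with; the only escape would be the degenerate case where $Q$ has a nontrivial kernel not contained in the symmetric subspace, and establishing that for some graph is exactly as hard as the original problem (and is contradicted generically by \cite[Theorem 4.2]{CdvT}, which says almost every choice of lengths yields no real resonance at all).

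For comparison, the paper avoids any perturbative tracking of a single resonance. It argues by contradiction: assuming no loop-free, degree-$\geq 3$, rationally independent graph has an eigenfunction vanishing at a vertex, it glues an auxiliary graph at a zero of an eigenfunction to extract the rational relation $\pi/\lambda_{n_0}=\sum_j a_j L_{e_j}$, propagates this relation to \emph{all} length vectors by analyticity of $\lambda_{n_0}(L)$ and a countability argument, specializes to equilateral complete graphs where the spectrum is governed by $\cos(\lambda L)\in\Sigma$ for the discrete Laplacian, and derives a contradiction with Niven's theorem on rational values of cosine at rational angles. You would need a genuinely different mechanism (such as this global rationality argument) rather than a local second-order perturbation of a symmetric configuration.
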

\end{tcolorbox}

\begin{proof}
The proposition will follow if we show that there exists a closed quantum graph $(V,E,L)$, with no loops, minimal degree $\geq 3$ and  lengths $(L_e)_{e\in E}$ forming a rationally independent family, and having an eigenfunction $\psi$ which vanishes at some vertex $v\in V$. Indeed, in this case, we may connect a lead to $v$, and $\psi$ (extended by zero in the lead) will be a resonant state associated to a resonance on the real axis.

Hence, we will suppose for contradiction, that, for any graph $(V,E)$ with no loops and minimal degree $\geq 3$, we have
\begin{equation}\label{eq:RatDep}
\begin{aligned}
&\left(\text{The closed quantum graph $(V,E,L)$ has an eigenfunction vanishing on a vertex}\right)\\
 &\Longrightarrow \left(\text{ The lenghts $(L_e)_{e\in E}$ are rationally dependent}\right).
\end{aligned}
\end{equation}

To reach a contradiction, we will argue in three steps.

\textbf{Step 1: A surprising rational relation}

Let $(V,E)$ be a quantum graph. Recall that, if we write $E= \{e_1,..., e_J\}$, it is always possible to write the spectrum of $(V,E,L)$ as 
$$\bigcup_{n\in \N} \{\lambda^2_n(L_{e_1}, ..., L_{e_J})\},$$
with each $\lambda_n$ depending in an analytic  way on each variable $L_{e_j}$, $1\leq j \leq J$. See \cite[\S 2.5.1]{BK} (or \cite[Chapter VII, Theorem 3.9]{Kato}  for more details.

%Note that we we have
%\begin{equation}\label{eq:Rescaling}
%\forall n\in \N,~ \forall s>0, ~ \lambda_n(sL_{e_1},...,sL_{e_J}) =\frac{1}{s^2} \lambda_n(L_{e_1},...,L_{e_J}) 
%\end{equation}

%If $(L_{e_j})_{j\in \{1,...,J\}}$ is a family of lengths, Lemma 3.7.4 from \cite{BK} tells us that the number of eigenvalues of $(V,E,L)$ in $[2,3]$ is $\frac{1}{\pi} \sum_{j\in J} L_{e_j} + R$, with $|R|\leq 2|E|$. In particular, if $L_e>2\pi$ for all $e\in E$, there exists $n_0\in \N$ such that $\lambda_{n_0}(L_{e_1}, ..., L_{e_J})\in [2,3]$.

Let $(L_{e_j})_{j=1,..., J}$ be a family of rationally independent  lengths, such that $L_{e_j}>2\pi$ for all $1\leq j \leq J$. Let $n_0\in \N$ be such that $ \lambda_{n_0}(L_{e_1}, ..., L_{e_J})\geq 2$. By continuity, we may find $\varepsilon>0$ such that, if $L'_{e_j}\in \left( L_{e_j} - \varepsilon, L_{e_j} + \varepsilon \right)$ for all $j\in \{1,..., J\}$, then $\lambda_{n_0}(L'_{e_1}, ..., L'_{e_J})\geq 1$.

Let $f$ be an eigenfunction associated to $\lambda_{n_0} =  \lambda_{n_0}(L_{e_1}, ..., L_{e_J})\geq 2$, and let $b_0$ be an oriented edge of $(V,E)$ associated to an edge $e_{j_0}\in E$. We know that there exists $a,\phi\in \R$ such that $f_b(x) = a \cos(\lambda_{n_0} x + \phi)$. Since $\lambda_{n_0}\geq 1$ and $L_{e_{j_0}} >2\pi$, the function $f_b$ must vanish at some point $x_b\in (0, \pi)\subset  (0, \frac{L_{e_{j_0}}}{2})$. 

We may add a new vertex at $x_b$, and glue a new graph with six edges at $x_b$, just as in figure \ref{Fig:Gluing}. This new graph contains no loop, and the degrees are all larger than 3. The lengths $\ell_1,..., \ell_6$ are arbitrary.

\begin{figure}[h] 
\center
   \includegraphics[scale=0.4]{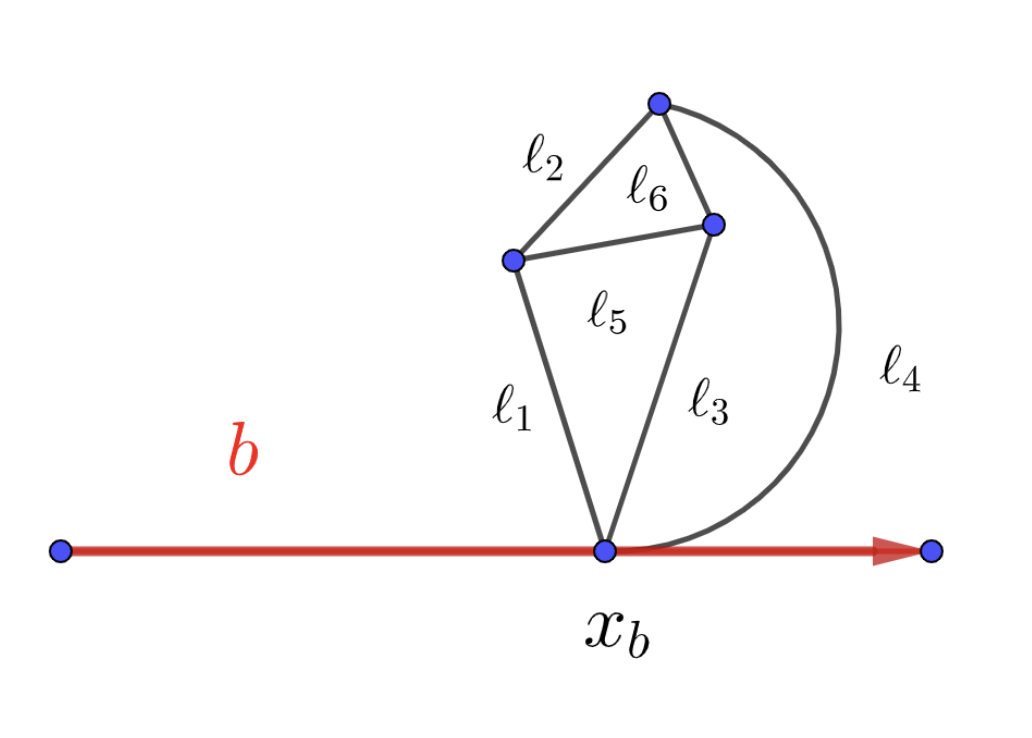}
 \caption{ Gluing a new graph at $x_b$.}\label{Fig:Gluing}
\end{figure}

 Extending the function $\psi$ by zero on the six new edges, we obtain an eigenfunction in this new graph, satisfying the Kirchhoff boundary conditions. We may thus apply (\ref{eq:RatDep}) to deduce that the lengths are rationally dependent. Since the length $\ell_1,..., \ell_6$ are arbitrary and the lengths $(L_{e_j})_{1\leq j \leq J}$ are rationally independent, we deduce that
for each $1\leq j \leq J$, there exists $a_j\in \Q$ such that
$$x_b = \sum_{j=1}^J a_j L_{e_j}.$$

Since we also have $f_b(x_b + \frac{\pi}{\lambda_{n_0}})=0$, we deduce that $x_b+ \frac{\pi}{\lambda_{n_0}}$ is also a rational combination of the $L_e$, and hence, 
\begin{equation}\label{eq:RatDep2}
\frac{\pi}{\lambda_{n_0}} = \sum_{j=1}^J a'_j L_{e_j}
\end{equation}
for some $a_j'\in \Q$.

\textbf{Step 2: Surprising rational relations are everywhere!}

Now, the same argument works for any choice of lengths $(L'_{e_j})_{j=1,..., J}$, provided they are rationally independent and $L'_{e_j}\in \left( L_{e_j} - \varepsilon, L_{e_j} + \varepsilon \right)$ for all $j\in \{1,..., J\}$. We will then obtain equation (\ref{eq:RatDep2}) for these lengths, possibly with other constants $a_j'$, and with $\lambda_{n_0} = \lambda_{n_0}(L'_{e_1}, ..., L'_{e_J})$.

Fix the lengths $(L'_{e_j})_{j=1,..., J-1}$, rationally independent. There are countably many possibilities for the rational numbers $a'_j$, and uncountably many choices for $L'_{e_J}$ to be rationally independent from the other lengths. Hence, there exist coefficients $a'_1,...,a'_J$ such that (\ref{eq:RatDep2}) holds for uncountably many choices of lengths $(L'_{e_J})$. Since $\lambda_{n_0}$ depends in an analytic way on the choice of the lengths, we deduce that, given lengths $(L'_{e_j})_{j=1,..., J-1}$ that are rationally independent, there exist rational numbers $a'_j$ such that (\ref{eq:RatDep2}) holds for all $L'_{e_J}\in \left(0, +\infty \right)$.

By recurrence, we then show that we may find rational numbers $a'_j$ such that (\ref{eq:RatDep2}) holds for all $L'_{e_j}\in \left( 0, +\infty \right)$, $1\leq j\leq J$. 

Hence, we have just shown that there exists $(a_{1,n_0},..., a_{J, n_0})\in \Q^J$ such that for all $(L'_{e_1}, ...,L'_{e_J})\in (0, +\infty)^J$, we have 

\begin{equation}\label{eq:RatDep3}
\frac{\pi}{\lambda_{n_0}} = \sum_{j=1}^J a_{j,n_0} L'_{e_j}.
\end{equation}

In particular, if the graph is equilateral, so that $L'_{e_j}= L$ for all $j$, we have
\begin{equation}\label{eq:Ratio2}
\frac{L \lambda_{n_0}(L,..., L)}{\pi} \in \Q.
\end{equation}

Now, the only thing about $n_0$ we used to prove (\ref{eq:RatDep3}) was the fact that $ \lambda_{n_0}(L_{e_1}, ..., L_{e_J})\geq 2$. But this condition holds for all but finitely many $n_0$. Therefore, for all but finitely many $n\in \N$, we have
\begin{equation}\label{eq:Ratio}
\frac{L \lambda_{n}(L,..., L)}{\pi} \in \Q.
\end{equation}

\textbf{Step 3: Too much rationality is insane !}

The spectrum of an equilateral quantum graph has been known for a long time (see for instance \cite[Theorem 3.6.1]{BK}, which comes from \cite{EquiPank}, and more generally, the references in \cite[Chapter]{BK}).
 If $\Sigma$ denotes the spectrum of the Laplacian\footnote{Given by $(\Delta f)(v) = f(v) - \frac{1}{d(v)} \sum_{w\sim v} f(w)$, where $w\sim v$ if $w$ and $v$ are connected by an edge, and $d(v)$ is the degree of $v$.} of the underlying discrete graph, then any $\lambda^2$ such that $1-\cos(\lambda L) \in \Sigma$ and $\sin(\lambda L)\neq 0$ is an eigenvalue of the equilateral quantum graph.
 
In particular, if the underlying graph is the complete $n$-graph, then  $1-\frac{1}{n}\in \Sigma$. Therefore, there would be infinitely many eigenvalues $\lambda^2$ such that $\cos(\lambda L) = \frac{1}{n}$. But we have shown previously that, for all but finitely many eigenvalues, $\lambda L$ is a rational angle, so that $\cos(\lambda L)\in (\R\backslash \Q) \cup \left\{0, \pm 1, \pm \frac{1}{2}\right\}$, which gives us a contradiction as soon as $n>2$. 
\end{proof}

\providecommand{\bysame}{\leavevmode\hbox to3em{\hrulefill}\thinspace}
\providecommand{\MR}{\relax\ifhmode\unskip\space\fi MR }
% \MRhref is called by the amsart/book/proc definition of \MR.
\providecommand{\MRhref}[2]{%
  \href{http://www.ams.org/mathscinet-getitem?mr=#1}{#2}
}
\providecommand{\href}[2]{#2}

\end{document}